\newtheorem{theorem}{Theorem}[section]
\newtheorem{lemma}[theorem]{Lemma}
\newtheorem{proposition}[theorem]{Proposition}
\newtheorem{corollary}[theorem]{Corollary}   
\newtheorem{definition}[theorem]{Definition}
\newtheorem{example}[theorem]{Example}
\newtheorem{remark}[theorem]{Remark}
\newtheorem{conjecture}[theorem]{Conjecture}
\newtheorem{question}[theorem]{Question}
\numberwithin{equation}{section}
\begin{document}

\title{Cohen-Macaulay Binomial edge ideals in terms of blocks with whiskers}
\author{
Kamalesh Saha \and Indranath Sengupta
}
\date{}

\address{\small \rm  Discipline of Mathematics, IIT Gandhinagar, Palaj, Gandhinagar, 
Gujarat 382355, INDIA.}
\email{kamalesh.saha@iitgn.ac.in}

\address{\small \rm  Discipline of Mathematics, IIT Gandhinagar, Palaj, Gandhinagar, 
Gujarat 382355, INDIA.}
\email{indranathsg@iitgn.ac.in}
\thanks{The second author is the corresponding author; supported by the 
MATRICS research grant MTR/2018/000420, sponsored by the SERB, Government of India.}

\date{}

\subjclass[2020]{Primary 05C25, 13C05, 13F65, 13H10}

\keywords{Binomial edge ideal, cutset, unmixed ideal, accessible, Cohen-Macaulay ring, strongly unmixed, block with whiskers}

\allowdisplaybreaks

\begin{abstract}
For a graph $G$, Bolognini et al. have shown $J_{G}$ is strongly unmixed $\Rightarrow$ $J_{G}$ is Cohen-Macaulay $\Rightarrow$ $G$ is accessible, where $J_{G}$ denotes the binomial edge ideals of $G$. Accessible and strongly unmixed properties are purely combinatorial. We give some motivations to focus only on blocks with whiskers for the characterization of all $G$ with Cohen-Macaulay $J_{G}$. We show that accessible and strongly unmixed properties of $G$ depend only on the corresponding properties of its blocks with whiskers and vice versa. Also, we give an infinite class of graphs whose binomial edge ideals are Cohen-Macaulay, and from that, we classify all $r$-regular $r$-connected graphs such that attaching some special whiskers to it, the binomial edge ideals become Cohen-Macaulay. Finally, we define a new class of graphs, called \textit{strongly $r$-cut-connected} and prove that the binomial edge ideal of any strongly $r$-cut-connected accessible graph having at most three cut vertices is Cohen-Macaulay.
\end{abstract}

\maketitle

\section{Introduction}
\medskip

Let $G$ be a simple graph (i.e., a finite undirected graph without multiple edges and loops) on the vertex set $V(G)=[n]=\{1,\ldots,n\}$. Consider the ring $R=K[x_{1},\ldots,x_{n},y_{1},\ldots,y_{n}]$, where $K$ is a field. The binomial edge ideal of $G$, denoted by $J_{G}$, is the ideal of $R$ defined as
$$J_{G}=\big<f_{ij} = x_{i}y_{j}-x_{j}y_{i}\mid \{i,j\}\in E(G)\,\, \text{with}\,\, i<j\big>.$$
The study of binomial edge ideals have been started in 2010 through the articles \cite{hhhrkara} and \cite{ohtani} independently. Concept of binomial edge ideals arises from the study of the ideal generated by $2$-minors of a $2\times n$-generic matrix.\par 

Throughout the previous eleven years, many works related to the algebraic properties and invariants of these ideals have been done (see \cite{bbs17}, \cite{crin11}, \cite{ez15}, \cite{herzogrin18}, \cite{jks20}, \cite{jksrees21}, \cite{ksm16}, \cite{mm13}, \cite{rsmkconj21}, \cite{smk18}). Generally, people try to see these things in terms of the combinatorial properties of the underlying graph. We are interested to classify those $G$ for which $J_{G}$ is Cohen-Macaulay. Although, several works have been done in this direction (see \cite{mont}, \cite{banlui}, \cite{bms_cmbip}, \cite{acc}, \cite{ehh_cmbin}, \cite{ert_licci}, \cite{hhhrkara}, \cite{s2acc}, \cite{ksara_cmunm}, \cite{cactus}, \cite{rin_smaldev}), but full characterization of Cohen-Macaulay binomial edge ideals is still widely open. \par 

To give a combinatorial characterization of Cohen-Macaulay binomial edge ideals, Bolognini et al., in \cite{acc}, have introduced two combinatorial properties of graphs: accessible (Definition \ref{defacc}) and strongly unmixed (Definition \ref{defsu}) property. Specifically, they have proved that $J_{G}$ is strongly unmixed $\Rightarrow$ $J_{G}$ is Cohen-Macaulay $\Rightarrow$ $G$ is accessible. Moreover, they conjectured \cite[Conjecture 1.1]{acc} on the equivalency of these three properties. In \cite{s2acc}, the authors showed if $R/J_{G}$ satisfies Serre's condition $S_{2}$, then $G$ is accessible. For any ideal $I\subseteq R$, it is an well known result that $R/I$ is Cohen-Macaulay if and only if $R/I$ satisfies Serre's condition $S_{r}$ for all $r\geq 1$. Therefore, combining the above results we get for any graph $G$,
{\small
$$ J_{G}\,\, \text{strongly\,\, unmixed} \Rightarrow J_{G}\,\, \text{Cohen-Macaulay} \Rightarrow R/J_{G}\,\, \text{is}\,\, S_{2} \Rightarrow G\,\, \text{accessible}.$$
}
Merging \cite[Conjecture 1.1]{acc} and \cite[Conjecture 0.1]{s2acc}, we get the following.

\begin{conjecture}\label{accsuconj}
Let $G$ be a graph. Then
{\small
$$ J_{G}\,\, \text{strongly\,\, unmixed} \Leftrightarrow J_{G}\,\, \text{Cohen-Macaulay} \Leftrightarrow R/J_{G}\,\, \text{is}\,\, S_{2} \Leftrightarrow G\,\, \text{accessible}.$$
}
\end{conjecture}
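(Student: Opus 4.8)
\medskip
\noindent\textbf{A possible line of attack on Conjecture~\ref{accsuconj}.} Since the implications $J_G$ strongly unmixed $\Rightarrow$ $J_G$ Cohen--Macaulay $\Rightarrow$ $R/J_G$ is $S_2$ $\Rightarrow$ $G$ accessible are already available (from \cite{acc} and \cite{s2acc}), proving the conjecture reduces to the single implication $G$ accessible $\Rightarrow$ $J_G$ strongly unmixed, which closes the cycle. The plan is to prove this implication after two reductions. The first handles disconnected graphs: if $G=G_1\sqcup G_2$ then $J_G=J_{G_1}+J_{G_2}$ in disjoint sets of variables, so $R/J_G\cong R_1/J_{G_1}\otimes_K R_2/J_{G_2}$, and each of the four properties in Conjecture~\ref{accsuconj} is compatible with such a tensor decomposition (for accessibility and strong unmixedness this is immediate from Definitions~\ref{defacc} and \ref{defsu} after noting that the cut sets of $G$ are exactly the unions of cut sets of the $G_i$; for Cohen--Macaulayness and $S_2$ it is the standard behaviour of tensor products over a field). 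Hence one may assume $G$ connected.

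The second and decisive reduction is to \emph{blocks with whiskers}. Granting the equivalences announced above --- that $G$ is accessible (resp.\ $J_G$ is strongly unmixed) if and only if every block with whiskers of $G$ is accessible (resp.\ has strongly unmixed binomial edge ideal), together with the corresponding decomposition for Cohen--Macaulayness --- Conjecture~\ref{accsuconj} for arbitrary $G$ follows from Conjecture~\ref{accsuconj} restricted to graphs that are a single $2$-connected graph with whiskers attached at a prescribed set of vertices. This is where the genuine content lies: one must show that, inside one block, accessibility of the poset of cut sets already forces the Cohen--Macaulay / strongly unmixed behaviour.

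For that restricted class I would proceed constructively, assembling a supply of graphs with provably strongly unmixed binomial edge ideals and matching them against the accessible ones. Concretely: (i) exhibit an explicit infinite family of blocks with whiskers whose binomial edge ideals are Cohen--Macaulay, verifying directly that their cut sets obey the accessibility recursion; (ii) use (i) to determine, for an $r$-regular $r$-connected graph, exactly which whiskerings make the binomial edge ideal Cohen--Macaulay; and (iii) for a block with few cut vertices, remove the cut vertices one at a time, at each step localizing and applying the inductive hypothesis so that the recursive clause in the definition of \emph{strongly} unmixed is handed a strictly smaller graph. The notion \emph{strongly $r$-cut-connected} is designed precisely so that this peeling preserves accessibility at every step, and restricting to at most three cut vertices keeps the list of block configurations that can occur short enough to be analyzed case by case.

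The main obstacle is step (iii) in full generality. Accessibility is an intrinsically nested condition --- informally, for every nonempty cut set $S$ there must exist $v\in S$ with $S\setminus\{v\}$ again a cut set --- and converting one step of this recursion into a statement about the ideal attached to a cut set requires controlling the interaction of the primary components of $J_G$ across all the cut vertices simultaneously, not merely one at a time. With one, two, or three cut vertices the possible arrangements of blocks can be enumerated and each treated directly; past that, the combinatorics of nested cut sets, compounded by the need to keep the gluing decomposition \emph{strongly} unmixed rather than only unmixed, is exactly the gap that leaves Conjecture~\ref{accsuconj} open.
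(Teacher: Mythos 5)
The statement is a conjecture, and neither you nor the paper proves it: the paper only establishes the supporting program, namely the reduction of all four properties to blocks with whiskers (Theorems \ref{baracc}, \ref{blocksu}, \ref{baraccG}, \ref{barsuG}) plus verification for special families ($\overline{K_m\star_r K_n}$, $r$-regular $r$-connected blocks, strongly $3$-cut-connected graphs with at most three cut vertices), and your proposal outlines essentially this same program, correctly identifying that the only missing implication is ``$G$ accessible $\Rightarrow J_G$ strongly unmixed'' and that it remains open in general. One small imprecision: the block-wise equivalence requires the additional hypothesis that $J_G$ is unmixed (it is not purely a statement about the blocks), though this is harmless in your intended direction since accessibility of $G$ includes unmixedness by definition; also the Cohen--Macaulay analogue of the block decomposition is only posed as Question \ref{quescmblock} in the paper, but it is not actually needed to close the cycle of implications.
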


\noindent To prove the above Conjecture \ref{accsuconj}, it is enough to show that $G$ is accessible implies $J_{G}$ is strongly unmixed. \par 

A \textit{block} of a connected graph $G$ is a maximal induced subgraph of $G$ which has no cut vertex. In many papers, we have seen examples of Cohen-Macaulay $J_{G}$ are some blocks with whiskers (see \cite{bms_cmbip}, \cite{acc}, \cite{s2acc}, \cite{cactus}, \cite{rin_smaldev}). We give the motivation to study only blocks with some whiskers to characterize all Cohen-Macaulay $J_{G}$. Also, to prove the Conjecture \ref{accsuconj}, we need to focus only on blocks with whiskers. We give some classes of binomial edge ideals in support of Conjecture \ref{accsuconj}. The paper is arranged in the following manner.\par 

In Section \ref{preli}, we recall some definitions, concept, notations related to graph theory and commutative algebra. Then we mention some results from \cite{acc}, \cite{s2acc}, \cite{raufrin} which have been used frequently in our work.\par 

In Section \ref{secblocksu}, we first prove some results regarding primary decomposition and unmixedness of $J_{G}$ and accessibility of $G$ for the sake of the rest of the paper. The main results of this section are the following.

\begin{theorem}[Theorem \ref{baracc}, \ref{blocksu}, \ref{baraccG} and \ref{barsuG}]\label{blockthm}
Let $G$ be a graph. Then the following are equivalent.
\begin{enumerate}[(i)]
\item $J_{G}$ is strongly unmixed (resp. $G$ is accessible).
\item $J_{G}$ is unmixed and $J_{\overline{B}}$ is strongly unmixed (resp. $\overline{B}$ is accessible) for each block $B$ of $G$.
\end{enumerate}
\end{theorem}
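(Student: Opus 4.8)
The plan is to establish the equivalence by reducing every relevant property—unmixedness, strong unmixedness, and accessibility—to a statement about the "blocks with whiskers" $\overline{B}$ of $G$, using the behaviour of cutsets under the block decomposition. First I would recall the combinatorial machinery: for a connected graph $G$, a subset $S \subseteq V(G)$ is a cutset if $S = \emptyset$ or $c_G(S \setminus \{v\}) < c_G(S)$ for every $v \in S$, where $c_G(S)$ denotes the number of connected components of $G \setminus S$; these index the minimal primes $P_S(G)$ of $J_G$, and $J_G$ is unmixed exactly when $\operatorname{ht} P_S(G)$ is constant over all cutsets $S$. The key structural observation is that a cut vertex of $G$ lies in some block $B$ and remains a cut vertex of the whiskered block $\overline{B}$, and conversely the cutsets of $G$ decompose compatibly with the block structure: if $S$ is a cutset of $G$, then $S \cap V(\overline{B})$ (appropriately interpreted, since whisker vertices are never cut vertices) is a cutset of $\overline{B}$, and this correspondence is what lets one compare heights block-by-block.

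The main steps, in order, would be: (1) Prove the unmixedness half first—show $J_G$ unmixed iff $J_G$ unmixed "locally at each block", i.e. reduce the height computation $\operatorname{ht} P_S(G) = \sum (\text{contributions over blocks})$ so that the constancy condition over cutsets of $G$ is equivalent to constancy over cutsets of each $\overline{B}$ together with a global unmixedness hypothesis; this is essentially Theorem~\ref{blocksu}/\ref{barsuG} in the unmixed case. (2) For the accessibility statement, use the fact (to be proved, cf. Theorem~\ref{baracc}, \ref{baraccG}) that accessibility is a condition asserting that every nonempty cutset $S$ contains a vertex $v$ with $S \setminus \{v\}$ again a cutset; since cutsets of $G$ restrict to cutsets of the $\overline{B}$'s and the "removable vertex" can be located inside a single block, $G$ accessible $\iff$ $J_G$ unmixed and each $\overline{B}$ accessible. (3) For strong unmixedness, which is defined recursively by deleting or contracting edges and requiring unmixedness at each stage, run an induction on the number of edges (or vertices): the block decomposition is compatible with the operations $G \setminus e$ and $G_e$ that appear in the definition of strong unmixedness, so the recursive condition on $G$ unwinds into the recursive condition on each $\overline{B}$, with the base case handled by step (1).

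The hard part will be step (3), and more precisely making the block decomposition interact cleanly with the edge-deletion and edge-contraction operations used in the definition of strong unmixedness: deleting or contracting an edge can create or destroy cut vertices and hence change the block structure, so one must check that the resulting blocks-with-whiskers are exactly (or are controlled by) the blocks-with-whiskers of the original graph after the corresponding local operation. I expect this to require a careful case analysis according to whether the manipulated edge is a bridge, lies inside a $2$-connected block, or is incident to a whisker, together with the observation that whiskers can be added or removed without affecting unmixedness (so the precise whiskering is immaterial up to the invariants in play). Once the bookkeeping in step (1) is set up correctly—especially the additivity of $\operatorname{ht} P_S$ across blocks—steps (2) and (3) should follow by the same restriction/induction pattern.
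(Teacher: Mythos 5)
Your step (3) rests on a misstatement of the definition in play: in this paper $J_G$ is strongly unmixed when either all components are complete, or $J_G$ is unmixed and there exists a \emph{cut vertex} $v$ such that $J_{G\setminus\{v\}}$, $J_{G_v}$ and $J_{G_v\setminus\{v\}}$ are all strongly unmixed (Definition \ref{defsu}); the operations are vertex deletion and completion of the neighbourhood of a vertex, not edge deletion/contraction, so an induction ``on the number of edges'' compatible with $G\setminus e$ and $G_e$ does not match the recursion you need to unwind. More seriously, even with the correct definition, the direction (ii) $\Rightarrow$ (i) for strong unmixedness is not a routine ``unwinding'': each $\overline{B}$ supplies its own witnessing cut vertex, and you must produce a \emph{single} cut vertex $v$ of $G$ for which all three of $G\setminus\{v\}$, $G_v$, $G_v\setminus\{v\}$ recurse correctly. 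The paper's proof of Theorem \ref{barsuG} devotes a separate claim to this: using that unmixedness forces the block graph $\mathscr{B}(G)$ to be a tree, it walks along the tree to find two adjacent blocks $B_p,B_q$ whose common cut vertex $v$ witnesses strong unmixedness for both $\overline{B_p}$ and $\overline{B_q}$ simultaneously, and then needs Propositions \ref{Gvvsu}, \ref{Gvsu} and Lemma \ref{Gv-vsu} to reassemble $G_v$ and $G_v\setminus\{v\}$ from the pieces. Your sketch says nothing about how the witnesses in different blocks are reconciled, and that is the crux of the argument.

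Two further points would fail as stated. First, ``whiskers can be added or removed without affecting unmixedness'' is false in general: removing a whisker changes whether the attachment vertex is free and can destroy unmixedness; the paper only removes a free vertex $v$ under the precise hypothesis that no cutset contains $\mathcal{N}_G(v)$ (Lemma \ref{freunm}, Proposition \ref{freesu}), and the whiskers in $\overline{B}$ are placed exactly at the cut vertices of $G$ in $B$ so that they simulate the deleted branches $G_i$. Second, your step (1) drifts toward asserting that unmixedness of $J_G$ is equivalent to a block-local condition; only one implication holds ($J_G$ unmixed $\Rightarrow$ each $J_{\overline{B}^W}$ unmixed, via the component count $c_{\overline{B}^W}(T)=c_G(T)$ in Theorem \ref{baracc}), and the converse fails --- this is exactly why statement (ii) retains ``$J_G$ is unmixed'' as a separate global hypothesis, and why the paper needs the cutset decomposition $\mathfrak{C}(G)=\mathcal{A}\cup\mathcal{B}\cup\mathcal{C}\cup\mathcal{D}$ of Proposition \ref{cutset} to control how cutsets of $G$ straddle several blocks. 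Your restriction-of-cutsets heuristic goes the wrong way for that direction and would need the full gluing analysis of Lemma \ref{Gbaracc} to be made rigorous.
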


\noindent The above Theorem \ref{blockthm} ensures that it is enough to study only blocks with whiskers for characterization of accessible graphs, strongly unmixed and Cohen-Macaulay binomial edge ideals. Also, due to the Theorem \ref{blockthm}, the Conjecture \ref{accsuconj} boils down to only blocks with whiskers instead of any graph $G$. At the end, in Theorem \ref{identifythm}, we settle down an open problem (see Question \ref{quesidentify}) given in \cite[Problem 7.2]{acc} for the case of accessibility of $G$ and strongly unmixedness of $J_{G}$. \par 

In Section \ref{secstarsu}, motivated from \cite[Question 5.4]{s2acc}, we started to find $r$-connected planar accessible graphs. Finally, we manage to find all $r$-regular $r$-connected non-complete blocks with whiskers, denoted by $\overline{K_{r}\star K_{r}}$, $r\geq 2$ (see \ref{secstarsu}), for which the Conjecture \ref{accsuconj} hold (see Theorem \ref{starthm}). But, among them the only planar graphs are $\overline{K_{2}\star K_{2}}$ and $\overline{K_{3}\star K_{3}}$.\par 

In Section \ref{seccutcon}, we define new classes of graphs called $r$-cut-connected and strongly $r$-cut-connected (Definition \ref{r-cutcon} and \ref{strongly-r-cutcon}). We give the following theorem and Example \ref{exallblocksu} compiling those blocks with whiskers for which the Conjecture \ref{accsuconj} hold.

\begin{theorem}[Theorem \ref{allblocksu}]
Let $G$ be a graph such that every block $B$ of $G$ satisfies any of the following conditions:\\
$(a)$ $B$ is chordal; $(b)$ $\overline{B}$ is traceable; $(c)$ $B$ is a chain of cycles (see \cite[Definition 4.2]{s2acc}; $(d)$ $B=K_{m}\star_{r} K_{n}$; $(e)$  $\overline{B}$ is strongly $3$-cut-connected containing at most $3$ cut vertices of $G$. Then the following are equivalent:
\begin{enumerate}[(i)]
\item $J_{G}$ is Cohen-Macaulay;
\item $R/J_{G}$ is $S_{2}$;
\item $G$ is accessible;
\item $J_{G}$ is unmixed and each $\overline{B}$ is accessible.
\item $J_{G}$ is strongly unmixed.
\end{enumerate}
\end{theorem}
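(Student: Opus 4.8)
The plan is to close the cycle of implications using only tools already in place. For an arbitrary graph we have $\mathrm{(v)}\Rightarrow\mathrm{(i)}\Rightarrow\mathrm{(ii)}\Rightarrow\mathrm{(iii)}$, as recalled in the Introduction: strong unmixedness implies Cohen-Macaulayness, Cohen-Macaulay implies Serre's $S_{2}$, and $S_{2}$ implies accessibility by the main result of \cite{s2acc}. Theorem \ref{blockthm} supplies $\mathrm{(iii)}\Leftrightarrow\mathrm{(iv)}$ outright (its ``accessible'' half). Thus the whole statement reduces to the single implication $\mathrm{(iv)}\Rightarrow\mathrm{(v)}$: assuming $J_{G}$ is unmixed and every $\overline{B}$ is accessible, we must show $J_{G}$ is strongly unmixed.

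For this I would apply Theorem \ref{blockthm} a second time, now in its ``strongly unmixed'' form: $J_{G}$ is strongly unmixed if and only if $J_{G}$ is unmixed and $J_{\overline{B}}$ is strongly unmixed for every block $B$ of $G$. Since $\mathrm{(iv)}$ already grants unmixedness of $J_{G}$, the problem localizes entirely to the blocks, and it suffices to prove, for each block $B$ of $G$,
\[
\overline{B}\ \text{accessible}\ \Longrightarrow\ J_{\overline{B}}\ \text{strongly unmixed}.
\]
Here $\overline{B}$ is $B$ with one whisker attached at each vertex of $B$ that is a cut vertex of $G$, and I would first observe that $\overline{B}$, viewed as a graph in its own right, still satisfies whichever of $\mathrm{(a)}$--$\mathrm{(e)}$ the block $B$ was assumed to satisfy: attaching whiskers preserves chordality and preserves being a chain of cycles; in case $\mathrm{(b)}$ one is already assuming $\overline{B}$ itself traceable; in case $\mathrm{(d)}$ a whiskered $K_{m}\star_{r}K_{n}$ stays in that family; and in case $\mathrm{(e)}$ the cut vertices of the standalone graph $\overline{B}$ are precisely the cut vertices of $G$ lying in $B$, hence at most three.

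It then remains to verify the displayed implication for a block-with-whiskers $\overline{B}$ of each of the five shapes. Cases $\mathrm{(a)}$, $\mathrm{(b)}$, $\mathrm{(c)}$ follow from the previously established validity of Conjecture \ref{accsuconj} for chordal graphs, for traceable graphs, and for chains of cycles with whiskers, respectively (see \cite{acc} and \cite{s2acc}); case $\mathrm{(d)}$ is covered by the analysis of the graphs $K_{m}\star_{r}K_{n}$ in Section \ref{secstarsu}, in particular Theorem \ref{starthm}; and case $\mathrm{(e)}$ is handled by the main theorem of Section \ref{seccutcon} on strongly $r$-cut-connected accessible graphs with at most three cut vertices, which gives strong unmixedness (hence, a fortiori, Cohen-Macaulayness) of $J_{\overline{B}}$. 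Assembling the five cases gives $\mathrm{(iv)}\Rightarrow\mathrm{(v)}$ and completes the proof.

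I expect case $\mathrm{(e)}$ to be the genuine obstacle. Showing that a strongly $3$-cut-connected accessible graph with at most three cut vertices has strongly unmixed binomial edge ideal is exactly what the new notions of Section \ref{seccutcon} are built for: one has to describe the cutsets of $\overline{B}$, check that the deletions and contractions appearing in the recursive definition of strong unmixedness again land in the accessible (and appropriately cut-connected) class, and run that recursion to termination; the bound of three cut vertices is what keeps the classification of cutsets finite and the induction tractable. A smaller point to watch is that the results quoted for cases $\mathrm{(a)}$--$\mathrm{(c)}$ must be invoked in their strong-unmixedness form, so that equivalence $\mathrm{(v)}$ is genuinely included and not merely $\mathrm{(i)}$--$\mathrm{(iv)}$.
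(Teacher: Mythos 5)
Your proposal is correct and follows essentially the same route as the paper: reduce via the block-with-whiskers equivalences (Theorems \ref{baracc}, \ref{baraccG} for $\mathrm{(iii)}\Leftrightarrow\mathrm{(iv)}$, and Theorem \ref{barsuG} for the passage from blocks back to $G$), then verify ``$\overline{B}$ accessible $\Rightarrow J_{\overline{B}}$ strongly unmixed'' case by case using the cited results of \cite{acc} and \cite{s2acc} for $(a)$--$(c)$, Theorem \ref{starthm} (via Theorem \ref{starsu}) for $(d)$, and Theorem \ref{cutconthm} for $(e)$. Your closing remarks correctly identify case $(e)$ as the substantive input and the need to invoke the strong-unmixedness forms of the earlier results, both of which the paper handles exactly as you anticipate.
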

\noindent At the end, we conclude by putting Question \ref{quescmblock} and \ref{quesidentifycm} keeping similarities with our key results.

\section{Preliminaries}\label{preli}
\medskip

In this article, we assume all graphs are simple. For a graph $G$, we denote the vertex set by $V(G)$ and edge set by $E(G)$. For a subset $W\subseteq V(G)$, the induced subgraph of $G$ on the vertex set $W$ is denoted by $G[W]$ and for $T\subseteq V(G)$, we mean by $G\setminus T$ as the graph $G[V(G)\setminus T]$. If $\{u,v\}\in E(G)$, then we say $u$ is adjacent to $v$ or vice versa. Similarly, we say $v$ is adjacent to $A$ (or $A$ is adjacent to $v$) if $v$ is adjacent to a vertex in $A$, where $A\subseteq V(G)$ and $v\in V(G)$.\par 

For a veretex $v\in V(G)$, we call $\mathcal{N}_{G}(v)=\{u\in V(G)\mid \{u,v\}\in E(G)\}$ the neighbor set of $v$ in $G$. We denote by $\mathrm{deg}_{G}(v)=\vert \mathcal{N}_{G}(v)\vert$ the \textit{degree} of a vertex $v$ in $G$. If $\mathcal{N}_{G}(v)=\{u\}$, then $\{u,v\}\in E(G)$ is called a \textit{whisker} attached to $u$. A \textit{path} from $u$ to $v$ of length $n$ in $G$ is a sequence of vertices $u=v_{0},\ldots,v_{n}=v\in V(G)$ such that $\{v_{i-1},v_{i}\}\in E(G)$ for each $1\leq i,j\leq n$ and $v_{i}\neq v_{j}$ if $i\neq j$. A \textit{chordless path} of length $n$, denoted by $P_{n}$, is a path of length $n$ without any induced cycle in it.\par 

A graph is called \textit{complete} if there is an edge between every pair of vertices and $K_{n}$ denotes the complete graph on $n$ vertices. A vertex $v\in V(G)$ is said to be a \textit{free vertex} of $G$ if the induced subgraph $G[\mathcal{N}_{G}(v)\cup \{v\}]$ is complete. A graph $G$ is called \textit{decomposable} into $G_{1}$ and $G_{2}$ if $G=G_{1}\cup G_{2}$ with $V(G_{1})\cap V(G_{2})=\{v\}$ such that $v$ is free vertex of both $G_{1}$ and $G_{2}$.\par 

A vertex $v\in V(G)$ is said to be a \textit{cut vertex} or \textit{cut point} of $G$ if removal of $v$ from $G$ increases the number of connected components. A connected graph $G$ is called \textit{$r$-connected} if removal of any set of vertices with cardinality less than $r$ keeps the graph connected. A graph $G$ is called \textit{$r$-regular} if $\mathrm{deg}_{G}(v)=r$ for all $v\in V(G)$. \par 

Let $G$ be a graph on the vertex set $V(G)=[n]$. A set $T\subseteq [n]$ is said to be a \textit{cutset} of $G$ (or we said $T$ has a cut point property) if each $t\in T$ is a cut vertex of $G\setminus (T\setminus\{t\})$. We denote by $\mathfrak{C}(G)$ the set of all cutsets of $G$. For $T\subseteq [n]$, we denote the number of connected components of the graph $G\setminus T$ by $c_{G}(T)$ (or sometimes by $c(T)$ if the graph is clearly understood from the context). Let $G_{1},\ldots,G_{c(T)}$ be the connected components of $G\setminus T$. For each $G_{i}$, we denote by $\tilde{G_{i}}$, the complete graph on the vertex set $V(G_{i})$. We set
$$ P_{T}(G)=\left\langle \bigcup_{i\in T}\lbrace x_{i},y_{i}\rbrace, J_{\tilde{G_{1}}},\ldots,J_{\tilde{G}_{c(T)}}\right\rangle.$$
Then $P_{T}(G)$ is a prime ideal and $J_{G}=\cap_{T\subseteq [n]}P_{T}(G)$. By \cite{hhhrkara}, $P_{T}(G)$ is a minimal prime ideal of $J_{G}$ if and only if $T\in\mathfrak{C}(G)$ i.e., 
$$J_{G}=\bigcap_{T\in\mathfrak{C}(G)} P_{T}(G),$$ 
is the minimal primary decomposition of $J_{G}$.
\medskip

From \cite[Lemma 3.1]{hhhrkara},  $\mathrm{height}\,P_{T}(G)=n+\vert T\vert -c(T)$. Since $\phi\in \mathfrak{C}(G)$, $J_{G}$ is unmixed (i.e., heights of all minimal primes of $J_{G}$ are same) if and only if $c(T)=\vert T\vert +c$ for any $T\in \mathfrak{C}(G)$, where $c$ denotes the number of connected components of $G$. It follows from \cite[Proposition 2.1]{raufrin}, that if $v$ is a free vertex of $G$, then $v\not\in T$ for all $T\in\mathfrak{C}(G)$.\par 

For a vertex $v$ of a graph $G$, we denote by $G_{v}$ the following graph:
$$ V(G_{v})=V(G)\,\, \text{and}\,\, E(G_{v})=E(G)\cup \{\{i,j\}\mid i,j\in \mathcal{N}_{G}(v), i\neq j\}.$$
Note that, $v$ is a free vertex of $G_{v}$.\par 

Let us recall some definitions and results from \cite{acc}, \cite{s2acc}, \cite{raufrin}.
\medskip

 \begin{definition}[\cite{acc}, Definition 2.2]\label{defacc}{\rm
Let $G$ be a graph. A cutset $T\in \mathfrak{C}(G)$ is said to be \textit{accessible} if there exists $t\in T$ such that $T\setminus\{t\}\in \mathfrak{C}(G)$. We say $\mathfrak{C}(G)$ is an \textit{accessible set system} if every non-empty cutset $T\in\mathfrak{C}(G)$ is accessible. The graph $G$ is said to be \textit{accessible} if $J_{G}$ is unmixed and $\mathfrak{C}(G)$ is an accessible set system.
}
\end{definition}

\begin{definition}[\cite{acc}, Definition 5.6]\label{defsu}{\rm
Let $G$ be a graph. We say $J_{G}$ is strongly unmixed if the connected components of $G$ are complete graphs or if $J_{G}$ is unmixed and there exists a cut vertex $v$ of $G$ for which $J_{G\setminus\{v\}}, J_{G_{v}}$ and $J_{G_{v}\setminus\{v\}}$ are strongly unmixed. Sometimes we will say $G$ is strongly unmixed instead of saying $J_{G}$ is strongly unmixed.
}
\end{definition}
The following results we have used extensively in our work.

\begin{theorem}[\cite{acc}, Theorem 4.12]\label{accthm}
Let $G$ be a connected non-complete accessible graph. Then
\begin{enumerate}[(i)]
\item every non-empty cutset of $G$ contains a cut vertex;
\item the induced subgraph on the cut vertices of $G$ is connected;
\item every vertex of $G$ is adjacent to a cut vertex.
\end{enumerate}
\end{theorem}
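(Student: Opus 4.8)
The plan is to use only the two combinatorial inputs an accessible $G$ supplies: the shrinking chain coming from Definition~\ref{defacc}, and the unmixedness identity $c_{G}(T)=\abs{T}+1$ for every $T\in\mathfrak{C}(G)$, which is the height formula recalled above specialised to a connected $G$ with $J_{G}$ unmixed. Part~(i) is the short one: given a non-empty $T\in\mathfrak{C}(G)$, I invoke accessibility repeatedly to build a strictly descending chain $T=T_{0}\supsetneq T_{1}\supsetneq\cdots\supsetneq T_{\abs{T}}=\emptyset$ of cutsets with $\abs{T_{i+1}}=\abs{T_{i}}-1$; then $T_{\abs{T}-1}=\{t\}$ is a one-element cutset, which by the definition of a cutset says precisely that $t$ is a cut vertex of $G$, and $t\in T$. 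Two consequences will be used below. First, since $G$ is non-complete it admits a non-empty cutset (for instance a minimal separator of two non-adjacent vertices), so by~(i) the set $C$ of cut vertices of $G$ is non-empty. Second, $\{u\}\in\mathfrak{C}(G)$ for each $u\in C$, hence $c_{G}(\{u\})=2$, and since the components of $G\setminus u$ are in bijection with the blocks of $G$ containing $u$, every cut vertex of $G$ lies in exactly two blocks.

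For part~(iii) I argue by contradiction, and this part rests on~(i) alone. Suppose $v\in V(G)$ is neither a cut vertex nor adjacent to one, and pick any $c\in C$; then $c\neq v$ and $c\not\sim v$, so $\mathcal{N}_{G}(v)$ is a non-empty set whose removal separates $v$ from $c$ (it isolates $v$). Passing to a minimal subset of $\mathcal{N}_{G}(v)$ that still separates $v$ from $c$, I obtain a minimal separator $S\subseteq\mathcal{N}_{G}(v)$ of $v$ and $c$; a minimal separator of two vertices is a cutset (each of its elements is a cut vertex of the graph obtained by deleting the others), so $S\in\mathfrak{C}(G)$ and $S\neq\emptyset$, whence by~(i) it contains a cut vertex $c'$ of $G$. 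But $c'\in S\subseteq\mathcal{N}_{G}(v)$ forces $v\sim c'$ with $c'\in C$, contradicting the choice of $v$. Hence $V(G)=C\cup\mathcal{N}_{G}(C)$, which is~(iii).

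Part~(ii) is where I expect the real work. Assume for contradiction that $G[C]$ is disconnected, and choose cut vertices $u,w$ in different connected components of $G[C]$ with $d_{G}(u,w)$ minimal. Then a shortest path $P$ from $u$ to $w$ in $G$ has all of its interior vertices outside $C$---an interior cut vertex would give a pair of cut vertices in two distinct components of $G[C]$ at smaller distance---and $P$ has at least two edges, since adjacent cut vertices always lie in one component of $G[C]$. The goal is to extract from this bridge either a non-empty cutset containing no cut vertex, contradicting~(i), or a cutset admitting no accessible reduction, contradicting the accessible set system property; the natural candidates are minimal separators wedged between consecutive vertices of $P$, or a cutset assembled from $u$, $w$ and a well-chosen selection of vertices along $P$, and the identity $c_{G}(\cdot)=\abs{\cdot}+1$---equivalently the fact that every cut vertex lies in exactly two blocks---is the device that forces the component counts to add up. The delicate point, hence the main obstacle, is to choose this separator or cutset so that one can certify it is unshrinkable or cut-vertex-free; this is precisely where unmixedness is indispensable, because without the height equality the component numbers along $P$ are not controlled. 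Combining (i), (ii) and (iii) then gives the stated structural description of connected non-complete accessible graphs.
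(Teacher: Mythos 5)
This statement is quoted from \cite{acc} (Theorem 4.12); the paper under review gives no proof of its own, so your attempt can only be judged on its internal completeness. Parts (i) and (iii) of your argument are correct and self-contained: the descending chain $T=T_{0}\supsetneq\cdots\supsetneq\{t\}$ supplied by accessibility does terminate at a singleton cutset, and a singleton cutset is by definition a cut vertex, which gives (i); and your derivation of (iii) from (i) via a minimal separator $S\subseteq\mathcal{N}_{G}(v)$ is clean (a minimal $(v,c)$-separator is indeed a cutset, since deleting all but one of its elements leaves every $v$--$c$ path through the remaining one). The auxiliary observations --- that a non-complete connected graph has a non-empty cutset, and that unmixedness forces $c_{G}(\{u\})=2$ for each cut vertex $u$ --- are also fine.

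The genuine gap is part (ii), and you have flagged it yourself. After correctly reducing to a shortest path $P$ between cut vertices $u,w$ in distinct components of $G[C]$ whose interior avoids $C$, you describe two possible sources of contradiction (a non-empty cutset with no cut vertex, or a non-accessible cutset) and name candidate constructions, but you never exhibit a specific set, verify that it has the cut point property, or compute its component count against $c_{G}(\cdot)=\lvert\cdot\rvert+1$ to reach a contradiction. The sentence beginning ``The delicate point, hence the main obstacle, is to choose this separator or cutset so that one can certify it is unshrinkable or cut-vertex-free'' is an admission that the decisive step is missing, and it is not a routine one: even the first candidate, $\{u,w\}$, requires a nontrivial case analysis to show it is a cutset and to control $c_{G}(\{u,w\})$, and the argument in \cite{acc} goes through several preparatory lemmas (e.g., that a vertex removable from an accessible cutset is adjacent to exactly two components of the complement) before connectivity of $G[C]$ can be extracted. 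As it stands, the central assertion of the theorem --- the connectivity of the induced subgraph on cut vertices, which is what the rest of this paper repeatedly invokes --- is not proved.
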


\begin{proposition}[\cite{acc}, Proposition 4.18]\label{accnoncut}
Let $G$ be an accessible graph and $T\in \mathfrak{C}(G)$. If $T$ contains some non-cut vertices, then $T\setminus\{t\}\in \mathfrak{C}(G)$ for some non-cut vertex $t\in T$.
\end{proposition}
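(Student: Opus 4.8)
The plan is to prove this by induction, using throughout the elementary description of cutsets: for $S\subseteq V(G)$ one has $S\in\mathfrak{C}(G)$ exactly when every $s\in S$ is adjacent to at least two connected components of $G\setminus S$ (reinserting $s$ into $G\setminus S$ merges precisely the components it touches, so $s$ is a cut vertex of $G\setminus(S\setminus\{s\})$ iff it touches two or more of them). First I would reduce to the case that $G$ is connected and non-complete: if $G$ is complete it has no non-empty cutset, and for disconnected $G$ one argues componentwise. Then Theorem~\ref{accthm} applies; I will use in particular part~(i), that every non-empty cutset of $G$ contains a cut vertex of $G$.

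Write $N\subseteq T$ for the set of vertices of $T$ that are not cut vertices of $G$; by hypothesis $N\neq\emptyset$, and since $T\neq\emptyset$ Theorem~\ref{accthm}(i) also gives $T\setminus N\neq\emptyset$. I would induct on $\lvert T\setminus N\rvert$, the number of cut vertices of $G$ lying in $T$. Base case $\lvert T\setminus N\rvert=1$, say $T=\{c\}\cup N$ with $c$ the unique cut vertex of $G$ in $T$: accessibility gives some $t_0\in T$ with $T\setminus\{t_0\}\in\mathfrak{C}(G)$; if $t_0=c$ then $T\setminus\{c\}=N$ would be a non-empty cutset consisting only of non-cut vertices, contradicting Theorem~\ref{accthm}(i), so $t_0\in N$ and we are done. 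Inductive step: pick $t_0\in T$ with $T_1:=T\setminus\{t_0\}\in\mathfrak{C}(G)$; if $t_0\in N$ we are done, so assume $t_0$ is a cut vertex of $G$. Then $T_1$ is a cutset with the same non-cut vertices $N\neq\emptyset$ and with $\lvert T_1\setminus N\rvert=\lvert T\setminus N\rvert-1$, so by the inductive hypothesis there is $v\in N$ with $T_1\setminus\{v\}\in\mathfrak{C}(G)$.

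It remains to upgrade $T_1\setminus\{v\}\in\mathfrak{C}(G)$ to $T\setminus\{v\}=(T_1\setminus\{v\})\cup\{t_0\}\in\mathfrak{C}(G)$, and this is the heart of the matter. Let $\mathcal{D}_v$ be the set of components of $G\setminus T$ adjacent to $v$; by the cutset description the goal becomes: every $s\in T\setminus\{v\}$ is adjacent to some component of $G\setminus T$ lying outside $\mathcal{D}_v$. For $s\in T\setminus\{t_0,v\}$ this is immediate from $T\setminus\{t_0,v\}=T_1\setminus\{v\}\in\mathfrak{C}(G)$: all components inside $\mathcal{D}_v$ fuse into a single component of $G\setminus(T\setminus\{t_0,v\})$, so if $s$ met only components inside $\mathcal{D}_v$ it would meet just one component of $G\setminus(T\setminus\{t_0,v\})$, contradicting that it meets at least two. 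The delicate case is $s=t_0$, where the goal amounts to ruling out $\mathcal{D}_{t_0}\subseteq\mathcal{D}_v$ — that $v$ is adjacent to every component to which $t_0$ is adjacent. This is precisely where the hypotheses that $v$ is not a cut vertex of $G$ and that $G$ is accessible must be used essentially: e.g.\ via Theorem~\ref{accthm}(ii),(iii) (the cut vertices induce a connected subgraph and every vertex is adjacent to a cut vertex) together with unmixedness, which forces $c_G(S)=\lvert S\rvert+1$ along $T\setminus\{t_0,v\}\subsetneq T_1\subsetneq T$ and so restricts the possible mergings of components. I expect this last reconciliation — matching the component structures of $G\setminus T$, $G\setminus T_1$ and $G\setminus(T_1\setminus\{v\})$ while genuinely using the non-cutness of $v$ — to be the main obstacle; everything else is routine. (Should the direct transfer prove stubborn, an alternative is to choose $t_0$ among all vertices removable from $T$, or $v$ among all non-cut vertices removable from $T_1$, so as to avoid the configuration $\mathcal{D}_{t_0}\subseteq\mathcal{D}_v$, or to recast the induction on $\lvert V(G)\rvert$ using a graph operation that preserves accessibility.)
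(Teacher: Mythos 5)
This proposition is quoted by the paper from \cite{acc} (Proposition 4.18) without proof, so there is no in-house argument to compare against; judged on its own, your proposal is not a complete proof. The reduction to connected non-complete $G$, the characterization of cutsets (each $s\in S$ adjacent to at least two components of $G\setminus S$), the base case via Theorem~\ref{accthm}(i), and the induction on the number of cut vertices of $G$ lying in $T$ are all sound. But the entire content of the proposition is concentrated in the step you leave open: upgrading $T_1\setminus\{v\}\in \mathfrak{C}(G)$ to $T\setminus\{v\}\in\mathfrak{C}(G)$, i.e.\ ruling out $\mathcal{D}_{t_0}\subseteq\mathcal{D}_v$. You explicitly flag this as ``the main obstacle'' and only gesture at Theorem~\ref{accthm}(ii)--(iii) and unmixedness without giving an argument. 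This is not a routine verification: accessibility tells you that \emph{some} element of each nonempty cutset is removable, whereas what you need is an exchange-type statement --- that the particular non-cut vertex $v$ which is removable from $T_1=T\setminus\{t_0\}$ remains removable once $t_0$ is put back. Nothing in your setup guarantees this for that specific $v$; the proposition only asserts existence of \emph{some} removable non-cut vertex of $T$, which could well be a different one. So the induction as structured may not be repairable without changing the choice of $t_0$ or $v$, as your closing parenthesis itself concedes.

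There is also a second, unacknowledged gap: the case $s\in T\setminus\{t_0,v\}$ is not ``immediate.'' Suppose $\mathcal{D}_s\subseteq\mathcal{D}_v$ but $s$ is adjacent to $t_0$, and $t_0$ lies in a component of $G\setminus(T\setminus\{t_0,v\})$ different from the one containing $v$. Then $s$ still meets two components of $G\setminus(T\setminus\{t_0,v\})$, so no contradiction with $T\setminus\{t_0,v\}\in\mathfrak{C}(G)$ arises --- yet $s$ would fail the cut-point test for $T\setminus\{v\}$, where $t_0$ is deleted. Handling this requires more (for instance combining $T\setminus\{t_0\}\in\mathfrak{C}(G)$ with unmixedness, or the fact from \cite{acc} that a vertex removable from a cutset of an unmixed graph is adjacent to exactly two components). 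As it stands, the proposal is an induction scheme whose decisive step is missing, not a proof.
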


\begin{proposition}[\cite{acc}, Proposition 5.2]\label{accpropunm}
Let $G$ be a connected graph such that $J_{G}$ is unmixed and $v$ be a cut vertex of $G$. If $H_{1}$ and $H_{2}$ are the connected components of $H = G\setminus\{v\}$, then the following are equivalent:
\begin{enumerate}[(i)]
\item $J_{H}$ is unmixed;
\item $T\in\mathfrak{C}(H)$ implies $\mathcal{N}_{H_{1}}(v)\not\subseteq T$ and $\mathcal{N}_{H_{2}}(v)\not\subseteq T$;
\item $\mathfrak{C}(H)=\{T\subseteq V(H)\mid T\cup\{v\}\in\mathfrak{C}(G)\}$.
\end{enumerate}
\end{proposition}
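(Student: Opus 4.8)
The plan is to establish the cycle of implications $(i)\Rightarrow(ii)\Rightarrow(iii)\Rightarrow(i)$. Throughout I will use the component–height dictionary recalled above: since $G$ is connected, $J_G$ is unmixed iff $c_G(T)=|T|+1$ for every $T\in\mathfrak C(G)$, and since $H=H_1\sqcup H_2$ has exactly two components (which follows from $\{v\}\in\mathfrak C(G)$ and unmixedness of $J_G$), $J_H$ is unmixed iff $c_H(T)=|T|+2$ for every $T\in\mathfrak C(H)$. I will also repeatedly use: the identity $G\setminus(T\cup\{v\})=H\setminus T$ for $T\subseteq V(H)$; the fact that $v$, being a cut vertex of the connected graph $G$ that separates $H_1$ from $H_2$, has a neighbour in each $H_i$, so $\mathcal N_{H_i}(v)\neq\emptyset$; and the fact that a set $T\subseteq V(H)$ lies in $\mathfrak C(H)$ iff $T\cap V(H_1)\in\mathfrak C(H_1)$ and $T\cap V(H_2)\in\mathfrak C(H_2)$, in which case $c_H(T)=c_{H_1}(T\cap V(H_1))+c_{H_2}(T\cap V(H_2))$.

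For $(ii)\Rightarrow(iii)$, one inclusion is essentially free: if $T\cup\{v\}\in\mathfrak C(G)$ then for each $t\in T$ we have $G\setminus((T\cup\{v\})\setminus\{t\})=H\setminus(T\setminus\{t\})$, so $t$ is a cut vertex of $H\setminus(T\setminus\{t\})$, whence $T\in\mathfrak C(H)$. Conversely, if $T\in\mathfrak C(H)$ then $(ii)$ says that neither $\mathcal N_{H_1}(v)$ nor $\mathcal N_{H_2}(v)$ lies inside $T$, so in $G\setminus T$ the vertex $v$ has a neighbour in $H_1\setminus T$ and a neighbour in $H_2\setminus T$; these two vertices lie in distinct components of $(G\setminus T)\setminus\{v\}=H\setminus T$, so deleting $v$ disconnects them, i.e.\ $v$ is a cut vertex of $G\setminus T$, and combined with the previous computation this gives $T\cup\{v\}\in\mathfrak C(G)$. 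For $(iii)\Rightarrow(i)$: given $T\in\mathfrak C(H)$, $(iii)$ yields $T\cup\{v\}\in\mathfrak C(G)$, and then unmixedness of $J_G$ gives $c_H(T)=c_G(T\cup\{v\})=|T\cup\{v\}|+1=|T|+2$; since $T$ was arbitrary, $J_H$ is unmixed.

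The heart of the matter is $(i)\Rightarrow(ii)$, which I will prove by contraposition. Suppose some $T\in\mathfrak C(H)$ contains $\mathcal N_{H_1}(v)$ (the $H_2$ case is symmetric), and set $T_1:=T\cap V(H_1)$; then $T_1\in\mathfrak C(H_1)$, hence $T_1\in\mathfrak C(H)$, and $\mathcal N_{H_1}(v)\subseteq T_1\neq\emptyset$. The key claim is that $T_1$ is also a cutset of $G$: for each $t\in T_1$ one verifies that $t$ is a cut vertex of $G\setminus(T_1\setminus\{t\})$ by a short case analysis on whether $t\in\mathcal N_{H_1}(v)$, the point being that since \emph{every} neighbour of $v$ in $H_1$ lies in $T_1$, in $G\setminus(T_1\setminus\{t\})$ the vertex $v$ has no neighbour in $H_1$ except possibly $t$ itself; consequently $v$ together with the connected subgraph $H_2$ always forms exactly one additional component, never split by deleting a vertex of $H_1$. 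This same observation shows $c_G(T_1)=c_{H_1}(T_1)+1=c_H(T_1)$. Now unmixedness of $J_G$ applied to $T_1\in\mathfrak C(G)$ forces $c_{H_1}(T_1)=|T_1|$, while unmixedness of $J_H$ applied to $T_1\in\mathfrak C(H)$ forces $c_{H_1}(T_1)=|T_1|+1$ — a contradiction. Hence no such $T$ exists, which is $(ii)$.

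The step I expect to require the most care is exactly this verification that $T_1\in\mathfrak C(G)$: one must track precisely how deleting a vertex $t\in T_1$ affects the component of $G\setminus(T_1\setminus\{t\})$ containing $v$ — which, thanks to $\mathcal N_{H_1}(v)\subseteq T_1$, swallows all of $H_2$ and at most the vertex $t$ from $H_1$ — against the components contributed by $H_1$, and confirm in each case that the total component count strictly increases. Everything else reduces to bookkeeping with the formula $\operatorname{height}P_T(G)=n+|T|-c(T)$ and the resulting translations of unmixedness of $J_G$ and of $J_H$.
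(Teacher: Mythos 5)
Your proof is correct: the cycle $(i)\Rightarrow(ii)\Rightarrow(iii)\Rightarrow(i)$ is complete, the component counts $c_G(T_1)=c_{H_1}(T_1)+1=c_H(T_1)$ and the verification that $T_1=T\cap V(H_1)\in\mathfrak{C}(G)$ when $\mathcal{N}_{H_1}(v)\subseteq T_1$ all check out, and the resulting clash between $c_{H_1}(T_1)=|T_1|$ (from unmixedness of $J_G$) and $c_{H_1}(T_1)=|T_1|+1$ (from unmixedness of $J_H$) is a genuine contradiction. The paper only quotes this proposition from Bolognini--Macchia--Strazzanti without reproving it, and your argument is essentially the standard cutset-and-component-counting proof given there.
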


\begin{lemma}[\cite{s2acc}, Lemma 4.10]\label{freeGsu}
Let $G$ be a graph and $v$ be a free vertex of $G$. If $J_{G}$ is unmixed and $J_{G\setminus\{v\}}$ is strongly unmixed, then $J_{G}$ is strongly unmixed.
\end{lemma}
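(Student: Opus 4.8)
The plan is to argue by induction on the pair $(\,|V(G)|,\ \text{number of non-edges of }G\,)$, ordered lexicographically; this is the well-founded order underlying Definition \ref{defsu}, since $G\setminus\{w\}$ and $G_{w}\setminus\{w\}$ have strictly fewer vertices while $G_{w}$ has the same vertices but strictly fewer non-edges whenever $w$ is a cut vertex. The base cases are immediate. It is routine from Definition \ref{defsu} that unmixedness and strong unmixedness of a binomial edge ideal reduce to the connected components of the graph, so we may assume $G$ is connected, with $v$ a free vertex. Since $\mathcal{N}_{G}(v)$ is a clique, it lies in a single connected component of $G\setminus\{v\}$, hence deleting $v$ cannot increase the number of components; so $v$ is not a cut vertex of $G$, and $H:=G\setminus\{v\}$ is connected. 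If $G$ is complete we are done, so assume it is not.

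Case 1: $H$ is complete, say $H=K_{m}$. Then $G$ is $K_{m}$ with $v$ joined to a sub-clique $S=\mathcal{N}_{G}(v)$, $\emptyset\neq S\subseteq V(K_{m})$. If $|S|=m$ then $G=K_{m+1}$, contradicting non-completeness; if $2\le|S|\le m-1$, one checks that $S\in\mathfrak{C}(G)$ while $c_{G}(S)=2\neq|S|+1$, contradicting unmixedness of $J_{G}$. Hence $|S|=1$, i.e.\ $v$ is a whisker on a vertex $w$, and $m\ge2$. Then $w$ is a cut vertex of $G$, and $G\setminus\{w\}=K_{m-1}\sqcup K_{1}$, $G_{w}=K_{m+1}$, $G_{w}\setminus\{w\}=K_{m}$ all have strongly unmixed binomial edge ideal; since $J_{G}$ is unmixed, Definition \ref{defsu} (with cut vertex $w$) shows that $J_{G}$ is strongly unmixed.

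Case 2: $H$ is not complete. Since $J_{H}$ is strongly unmixed and $H$ is connected and non-complete, Definition \ref{defsu} provides a cut vertex $w$ of $H$ such that $J_{H\setminus\{w\}}$, $J_{H_{w}}$, $J_{H_{w}\setminus\{w\}}$ are strongly unmixed, hence unmixed. As $H\setminus\{w\}$ is disconnected and $v$ has neighbours in at most one of its components, $G\setminus\{w\}$ is disconnected too, so $w$ is a cut vertex of $G$. The key structural observation is that $G\setminus\{w\}$, $G_{w}$, $G_{w}\setminus\{w\}$ are obtained from $H\setminus\{w\}$, $H_{w}$, $H_{w}\setminus\{w\}$ respectively by adjoining $v$ as a free vertex along a suitable clique — the only degeneracy being that when $\mathcal{N}_{G}(v)=\{w\}$ the vertex $v$ becomes an isolated $K_{1}$-component of $G\setminus\{w\}$, in which case strong unmixedness of $J_{G\setminus\{w\}}$ is immediate. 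For each of the three graphs $\Gamma\in\{G\setminus\{w\},G_{w},G_{w}\setminus\{w\}\}$, I would first establish that $J_{\Gamma}$ is unmixed, using unmixedness of $J_{G}$ together with Proposition \ref{accpropunm}; then, because $v$ remains a free vertex of $\Gamma$, the graph $\Gamma\setminus\{v\}\in\{H\setminus\{w\},H_{w},H_{w}\setminus\{w\}\}$ has strongly unmixed binomial edge ideal, and $\Gamma$ precedes $G$ in the lexicographic order, the induction hypothesis yields that $J_{\Gamma}$ is strongly unmixed. Definition \ref{defsu}, applied to $G$ with the cut vertex $w$, then completes the induction.

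The step I expect to be the main obstacle is exactly the verification, in Case 2, that $J_{G\setminus\{w\}}$, $J_{G_{w}}$ and $J_{G_{w}\setminus\{w\}}$ are unmixed. This comes down to a careful comparison of the cutsets of a graph with those of the graph obtained from it by adjoining a free vertex: such an operation never destroys a cutset, but it can create new ones through a ``whisker trick'', and one must check that every cutset $T$ so produced still satisfies $c(T)=|T|+1$ — concretely, that $\mathcal{N}_{G}(v)$ (suitably restricted when passing to $H\setminus\{w\}$, $H_{w}$ or $H_{w}\setminus\{w\}$) is never contained in a cutset, a fact to be deduced from unmixedness of $J_{G}$ and then transported along all three branches. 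The bookkeeping is lightened by the fact that $w$ is a free vertex of $H_{w}$, hence of $G_{w}$, so it lies in no cutset of those graphs; but the several positions of $w$ relative to $\mathcal{N}_{G}(v)$, together with the isolated-vertex degeneracy, still account for the bulk of the work.
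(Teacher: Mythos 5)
The paper itself does not prove this lemma; it is imported from \cite{s2acc}, so there is no internal proof to measure you against. Your architecture is the natural one and is structurally sound: the lexicographic induction on $(\lvert V(G)\rvert,\#\{\text{non-edges}\})$ is well founded for Definition \ref{defsu} (note that the cut vertex $w$ of $H=G\setminus\{v\}$ is also a cut vertex of $G$, hence not free in $G$, so $G_{w}$ really does have strictly fewer non-edges); Case 1 is complete and correct; and the identification of $G\setminus\{w\}$, $G_{w}$, $G_{w}\setminus\{w\}$ as $H\setminus\{w\}$, $H_{w}$, $H_{w}\setminus\{w\}$ with $v$ re-adjoined as a free vertex is right (one should check that $\mathcal{N}_{G_{w}}(v)$ is still a clique when $w\in\mathcal{N}_{G}(v)$, which follows because $\mathcal{N}_{G}(v)\setminus\{w\}\subseteq\mathcal{N}_{G}(w)$). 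Incidentally, the degeneracy you set aside cannot occur: if $\mathcal{N}_{G}(v)=\{w\}$ then $G\setminus\{w\}$ has three components while $\{w\}\in\mathfrak{C}(G)$ forces $c_{G}(\{w\})=2$ by unmixedness of $J_{G}$.

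The genuine gap sits exactly where you flag it, and the one concrete reduction you offer for closing it is false as stated. Take $\mathcal{N}_{G}(v)=\{w,a\}$ with the component of $a$ in $H\setminus\{w\}$ having at least two vertices (e.g.\ $H$ the path $b$--$a$--$w$--$c$ with designated cut vertex $w$, and $v$ attached to the edge $\{a,w\}$): then $\{a\}\in\mathfrak{C}(G\setminus\{w\})$ and it contains $\mathcal{N}_{G\setminus\{w\}}(v)$, so the claim that the restricted neighbourhood of $v$ ``is never contained in a cutset'' fails, while all hypotheses of the lemma hold. What must actually be proved for $\Gamma=G\setminus\{w\}$ is that every $T\in\mathfrak{C}(\Gamma)$ containing $\mathcal{N}_{G}(v)\setminus\{w\}$ satisfies $c_{\Gamma}(T)=\lvert T\rvert+2$; this requires lifting $T$ to $T\cup\{w\}\in\mathfrak{C}(G)$ and applying unmixedness of $J_{G}$ there, and the verification that the lift is a cutset is itself case-dependent (when $w\notin\mathcal{N}_{G}(v)$ the vertex $w$ need not separate $v$ in $G\setminus T$, and $T$ need not restrict to a cutset of $H\setminus\{w\}$ either). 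For $\Gamma=G_{w}\setminus\{w\}$ the required statement is different again: one must show that no cutset of $G$ avoiding $w$ contains $\mathcal{N}_{G}(w)$, and this has to be transported back from unmixedness of $J_{H_{w}\setminus\{w\}}$ through a further comparison of $\mathfrak{C}(G)$ with $\mathfrak{C}(H)$, split according to whether $\mathcal{N}_{G}(v)\subseteq T$. These verifications are not bookkeeping lightened by Proposition \ref{accpropunm}; they are the actual mathematical content of the lemma, and until they are written out the induction does not close.
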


\begin{remark}{\rm
For a graph $G$ with connected components $G_{1},\ldots, G_{r}$, $J_{G}$ is unmixed (resp. Cohen-Macaulay, strongly unmixed) if and only if $J_{G_{i}}$ is unmixed (resp. Cohen-Macaulay, strongly unmixed) for each $i=1,\ldots,r$. The same holds for the accessible property of $G$. So, we will assume any graph is connected if it is not clear from the context.
}
\end{remark}

\begin{remark}\label{remglu}{\rm
Let $G=G_{1}\cup G_{2}$ with $V(G_{1})\cap V(G_{2})=\{v\}$ be a decomposable graph, where $v$ is a free vertex in both $G_{1}$ and $G_{2}$. Then
\begin{enumerate}[(i)]
\item $J_{G}$ is unmixed (resp. Cohen-Macaulay) if and only if $J_{G_{i}}$ is unmixed (resp. Cohen-Macaulay) for each $i=1,2$ (see \cite{raufrin}).

\item $J_{G}$ is accessible (resp. strongly unmixed) if and only if $J_{G_{i}}$ is accessible (resp. strongly unmixed) for each $i=1,2$ (see \cite{s2acc}).
\end{enumerate}
}
\end{remark}

\section{Block-Wise Strongly Unmixed, Accessible and Cohen-Macaulay properties}\label{secblocksu}
\medskip

The main aim of this section is to establish the connection between accessible, strongly unmixed properties of a graph and its corresponding blocks with whiskers. We start by proving some results regarding primary decomposition and unmixedness of $J_{G}$ as well as the accessibility of $G$.

\begin{proposition}\label{cutset}
Let $G=G_{1}\cup G_{2}$ be a graph such that $V(G_{1})\cap V(G_{2})=\{v\}$. Then $\mathfrak{C}(G)= \mathcal{A}\cup \mathcal{B}\cup\mathcal{C}\cup \mathcal{D}$, where
\begin{align*}
\mathcal{A}&= \{S_{1}\cup S_{2}\mid S_{1}\in \mathfrak{C}(G_{1}), S_{2}\in \mathfrak{C}(G_{2})\,\,\text{and}\,\, v\not\in S_{1}\cup S_{2}\}\\
\mathcal{B}&=\{T_{1}\cup S_{2}\mid v\in S_{2}\in \mathfrak{C}(G_{2})\,\,\text{and}\,\, T_{1}\in \mathfrak{C}(G_{1}\setminus \{v\})\}\\
\mathcal{C}&=\{S_{1}\cup T_{2}\mid v\in S_{1}\in \mathfrak{C}(G_{1})\,\,\text{and}\,\, T_{2}\in \mathfrak{C}(G_{2}\setminus \{v\})\}\\
\mathcal{D}&=\{T_{1}\cup T_{2}\cup \{v\} \mid \mathcal{N}_{G_{i}}(v)\not\subseteq T_{i}\in \mathfrak{C}(G_{i}\setminus\{v\}),\,\,\text{for}\,\, i\in\{1,2\}\}.
\end{align*}
\end{proposition}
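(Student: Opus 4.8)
The plan is to argue directly from the combinatorial description of cutsets: $T\in\mathfrak{C}(G)$ if and only if, for every $t\in T$, we have $c_G(T)>c_G(T\setminus\{t\})$, i.e. $t$ is a cut vertex of $G\setminus(T\setminus\{t\})$. The whole proof is then driven by two elementary identities for how connected components behave under the one-point gluing $G=G_{1}\cup G_{2}$, $V(G_{1})\cap V(G_{2})=\{v\}$. Writing $T_i:=T\cap V(G_i)$, I would first record: if $v\notin T$ then $c_G(T)=c_{G_1}(T_1)+c_{G_2}(T_2)-1$, since the (unique) components of $G_1\setminus T_1$ and $G_2\setminus T_2$ containing $v$ get fused and nothing else changes; and if $v\in T$, writing $T_i':=T_i\setminus\{v\}$, then $G\setminus T$ is the \emph{disjoint} union of $(G_1\setminus\{v\})\setminus T_1'$ and $(G_2\setminus\{v\})\setminus T_2'$, so $c_G(T)=c_{G_1\setminus\{v\}}(T_1')+c_{G_2\setminus\{v\}}(T_2')$. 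I would also note the trivial identity $c_{G_i\setminus\{v\}}(T_i')=c_{G_i}(T_i'\cup\{v\})$, which lets me pass between the two pictures.

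For the inclusion $\mathcal{A}\cup\mathcal{B}\cup\mathcal{C}\cup\mathcal{D}\subseteq\mathfrak{C}(G)$ I would check each family. For $T\in\mathcal{A}$ ($v\notin T$) the first identity shows the cut-point condition at any $t\in T_i$ collapses exactly to the cut-point condition of $t$ inside $G_i$, so $T$ is a cutset. For $T\in\mathcal{D}$ we have $v\in T$, and for $t\in T_i$ the graph $G\setminus(T\setminus\{t\})$ (which no longer contains $v$) splits as a disjoint union, so the condition at $t$ reduces to $T_i\in\mathfrak{C}(G_i\setminus\{v\})$, which is assumed; the condition at $v$ holds because $\mathcal{N}_{G_1}(v)\not\subseteq T_1$ and $\mathcal{N}_{G_2}(v)\not\subseteq T_2$ give $v$ a neighbour on each side of $G\setminus(T\setminus\{v\})$, and those two neighbours lie in different components once $v$ is deleted. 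For $T\in\mathcal{B}$ (and $\mathcal{C}$ symmetrically): from $v\in S_2\in\mathfrak{C}(G_2)$ one gets both that $v$ is a cut vertex of $G_2\setminus(S_2\setminus\{v\})$ and that $S_2\setminus\{v\}\in\mathfrak{C}(G_2\setminus\{v\})$; together with $T_1\in\mathfrak{C}(G_1\setminus\{v\})$ this is exactly what is needed to run the same verification, even in the degenerate case where $v$ happens to be isolated on the $G_1$-side.

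For the reverse inclusion, take $T\in\mathfrak{C}(G)$. If $v\notin T$, the first identity immediately yields $T_i\in\mathfrak{C}(G_i)$, so $T\in\mathcal{A}$. If $v\in T$, the conditions at the vertices $t\in T_i'$ give (via the second identity) $T_i'\in\mathfrak{C}(G_i\setminus\{v\})$. The remaining condition, at $v$ itself, becomes after substitution $a+b\ge 0$, where $a:=c_{G_1}(T_1'\cup\{v\})-c_{G_1}(T_1')\ge -1$ and likewise $b\ge -1$, with the value $-1$ occurring precisely when $v$ is isolated in the corresponding $G_i\setminus T_i'$, i.e. $\mathcal{N}_{G_i}(v)\subseteq T_i'$. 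Since $a$ and $b$ cannot both be $-1$, there is a trichotomy: if $a,b\ge 0$ then $\mathcal{N}_{G_i}(v)\not\subseteq T_i'$ for both $i$, so $T\in\mathcal{D}$; if $a=-1$ then $b\ge 1$, meaning $v$ is a cut vertex of $G_2\setminus T_2'$, and a short ``promotion'' argument upgrades $T_2'\in\mathfrak{C}(G_2\setminus\{v\})$ to $S_2:=T_2'\cup\{v\}\in\mathfrak{C}(G_2)$, so $T=T_1'\cup S_2\in\mathcal{B}$; the case $b=-1$ gives $\mathcal{C}$ symmetrically.

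I expect the main obstacle to be the careful bookkeeping around isolated vertices — the fact that deleting $v$ can \emph{decrease} the component count — since this is exactly what distinguishes the ``generic'' membership in $\mathcal{D}$ from the ``degenerate'' memberships in $\mathcal{B}$ and $\mathcal{C}$, and it is what forces the trichotomy above. The only other point needing care is the little promotion lemma: a cutset $T_i'$ of $G_i\setminus\{v\}$ together with the information that $v$ is a cut vertex of $G_i\setminus T_i'$ reassembles into a cutset $T_i'\cup\{v\}$ of $G_i$ containing $v$, and conversely; this is a routine check that the cut-point property at each vertex is preserved, because $G_i\setminus((T_i'\cup\{v\})\setminus\{t\})=(G_i\setminus\{v\})\setminus(T_i'\setminus\{t\})$ for $t\neq v$. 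Everything else is a direct translation of the cut-point property through the two component-counting identities.
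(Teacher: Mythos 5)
Your proposal is correct and follows essentially the same route as the paper: both directions are verified directly from the cut-point-property definition, splitting on whether $v\in T$, restricting cutsets to $G_i$ or $G_i\setminus\{v\}$, and sorting the $v\in T$ case by whether $\mathcal{N}_{G_i}(v)\subseteq T_i$, with the same ``promotion'' of $T_i\cup\{v\}$ to a cutset of $G_i$. Your explicit component-counting identities and the $a+b\ge 0$ trichotomy merely make quantitative the case analysis that the paper carries out qualitatively.
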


\begin{proof}
Let $S\in \mathfrak{C}(G)$ be such that $v\not\in S$. Then $S=S_{1}\cup S_{2}$, where $S_{1}\subseteq V(G_{1})$ and $S_{2}\subseteq V(G_{2})$. For any $s\in S_{1}$, $s$ is a cut vertex in $G\setminus (S\setminus \{s\})$. Now $v\not\in S$ implies $v\in V(G\setminus(S\setminus \{s\}))$ and therefore, $s$ is also a cut vertex in $G_{1}\setminus (S_{1}\setminus \{s\})$. Hence $S_{1}\in \mathfrak{C}(G_{1})$ and by similar argument we get $S_{2}\in \mathfrak{C}(G_{2})$. Thus, $S\in\mathcal{A}$. Now assume $S\in \mathfrak{C}(G)$ and $v\in S$. Then we can write $S=T_{1}\cup T_{2}\cup \{v\}$, where $T_{i}\subseteq V(G_{i}\setminus\{v\})$ for $i=1,2$. Note that any $t\in T_{1}$ is a cut vertex of $G\setminus (S\setminus \{t\})$ which imply $t$ is a cut point of $G_{1}\setminus (T_{1}\cup \{v\}\setminus \{t\})=(G_{1}\setminus \{v\})\setminus (T_{1}\setminus \{t\})$. Therefore $T_{1}\in \mathfrak{C}(G_{1}\setminus \{v\})$ and similarly, we have $T_{2}\in \mathfrak{C}(G_{2}\setminus \{v\})$. Now we will consider few cases. Suppose $\mathcal{N}_{G_{i}}(v)\subseteq T_{i}$ for $i=1,2$. Then $v$ can not be a cut vertex in $G\setminus (S\setminus \{v\})$ which is a contradiction to the fact that $S\in \mathfrak{C}(G)$. Now assume $\mathcal{N}_{G_{1}}(v)\subseteq T_{1}$ but $\mathcal{N}_{G_{2}}(v)\not\subseteq T_{2}$. Set $S_{2}=T_{2}\cup \{v\}$. Since $v$ is a cut vertex in $G\setminus (T_{1}\cup T_{2})$ and $\mathcal{N}_{G_{1}}(v)\subseteq T_{1}$, it is easy to observe that $v$ is a cut vertex in $G_{2}\setminus (S_{2}\setminus \{v\})$. Also, $T_{2}\in \mathfrak{C}(G_{2}\setminus \{v\})$ implies any $t\in T_{2}$ is a cut point of $(G_{2}\setminus \{v\})\setminus ( T_{2}\setminus \{t\})=G_{2}\setminus (S_{2}\setminus \{t\})$. Therefore, we have $T_{1}\in \mathfrak{C}(G_{1}\setminus \{v\})$ and $S_{2}\in \mathfrak{C}(G_{2})$. Hence $S=T_{1}\cup S_{2} \in \mathcal{B}$. Similarly, if $\mathcal{N}_{G_{1}}(v)\not\subseteq T_{1}$ and $\mathcal{N}_{G_{2}}(v)\subseteq T_{2}$, then we get $S\in \mathcal{C}$. Consider $\mathcal{N}_{G_{i}}(v)\not\subseteq T_{i}$ for $i=1,2$. Then $S\in\mathcal{D}$ is clear from definition of $\mathcal{D}$\par

Conversely, let $S\in \mathcal{A}$. Then $S=S_{1}\cup S_{2}$ with $S_{1}\in \mathfrak{C}(G_{1})$, $S_{2}\in \mathfrak{C}(G_{2})$, and $v\not\in S$. Now, any $s_{i}\in S_{i}$ is a cut vertex in $G_{i}\setminus (S_{i}\setminus \{s_{i}\})$ which imply $s_{i}$ is a cut vertex in $G\setminus(S\setminus \{s_{i}\})$ as $v\not\in S$, where $i\in\{1,2\}$. Thus, $S\in \mathfrak{C}(G)$. Let $S\in \mathcal{B}$. Then $S=T_{1}\cup S_{2}$ such that $T_{1}\in \mathfrak{C}(G_{1}\setminus\{v\})$ and $v\in S_{2}\in \mathfrak{C}(G_{2})$. Let $t\in T_{1}$ be any vertex. Then $t$ is a cut vertex in $(G_{1}\setminus \{v\})\setminus (T_{1}\setminus\{t\})$ and  since $v\in S_{2}$, $t$ is also a cut point of $G\setminus (S\setminus \{t\})$. Again, any $s\in S_{2}$ is a cut vertex in $G_{2}\setminus (S_{2}\setminus \{s\})$. Since $T_{1}\cap V(G_{2})=\phi$, $s$ is a cut point of $G\setminus (S\setminus \{s\})$ too. Hence, $S\in \mathfrak{C}(G)$. Similarly, we have $\mathcal{C}\subseteq \mathfrak{C}(G)$. For $S\in \mathcal{D}$, we have $S=T_{1}\cup T_{2}\cup \{v\}$, where $\mathcal{N}_{G_{i}}(v)\not\subseteq T_{i}\in \mathfrak{C}(G_{i}\setminus\{v\})$, for $i\in\{1,2\}$. Then any $t\in T_{1}\cup T_{2}$ is clearly a cut vertex of $G\setminus (S\setminus \{t\})$. Now, since $\mathcal{N}_{G_{i}}(v)\not\subseteq T_{i}$, there exists $v_{i}\in \mathcal{N}_{G_{i}}(v)$ such that $v_{i}\in V(G_{i}\setminus T_{i})$, for $i=1,2$. Therefore, $v$ is a cut point in $G\setminus (S\setminus\{v\})$ and so, $S\in \mathfrak{C}(G)$. Hence, $\mathfrak{C}(G)=\mathcal{A}\cup\mathcal{B}\cup\mathcal{C}\cup\mathcal{D}$.
\end{proof}

\begin{corollary}\label{unmixed}
Let $G=G_{1}\cup G_{2}$ be such that $V(G_{1})\cap V(G_{2})=\{v\}$. If the following conditions hold:
\begin{enumerate}[(i)]
\item $J_{G\setminus\{v\}}$ is unmixed;
\item for $S_{i}\in \mathfrak{C}(G_{i})$ with $v\not\in S_{i}$, we have $c_{G_{i}}(S_{i})=\vert S_{i}\vert+1$ and for $S_{i}\in \mathfrak{C}(G_{i})$ with $v\in S_{i}$, we have $c_{G_{i}}(S_{i})=\vert S_{i}\vert$, where $i=1,2$;
\end{enumerate}
then $J_{G}$ is unmixed. 
\end{corollary}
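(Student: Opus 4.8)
\emph{Proof plan.}
The plan is to invoke the characterization of unmixedness recalled in Section~\ref{preli}: since $G_{1}$ and $G_{2}$ are connected (apply (ii) to $S_{i}=\emptyset$, which gives $c_{G_{i}}(\emptyset)=1$) and share the vertex $v$, the graph $G$ is connected, so $J_{G}$ is unmixed precisely when $c_{G}(T)=|T|+1$ for every $T\in\mathfrak{C}(G)$. By Proposition~\ref{cutset}, $\mathfrak{C}(G)=\mathcal{A}\cup\mathcal{B}\cup\mathcal{C}\cup\mathcal{D}$, so it suffices to check this identity for $T$ in each of the four families, reading off $c_{G}(T)$ from the component structure of $G\setminus T$.

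Two preliminary observations set up the computation. First, because $E(G)=E(G_{1})\cup E(G_{2})$, no edge of $G$ joins $V(G_{1})\setminus\{v\}$ to $V(G_{2})\setminus\{v\}$; hence $G\setminus\{v\}$ is the disjoint union of $G_{1}\setminus\{v\}$ and $G_{2}\setminus\{v\}$, and by the Remark on connected components, hypothesis (i) is equivalent to ``$J_{G_{i}\setminus\{v\}}$ is unmixed for $i=1,2$''. Second, $v$ is not a cut vertex of $G_{i}$: otherwise $\{v\}\in\mathfrak{C}(G_{i})$, and (ii), applied with $v\in S_{i}=\{v\}$, would force $c_{G_{i}}(\{v\})=1$, contradicting that deleting a cut vertex from the connected graph $G_{i}$ increases the number of components. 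Therefore $G_{i}\setminus\{v\}$ is connected, and combined with the first observation, $c_{G_{i}\setminus\{v\}}(T_{i})=|T_{i}|+1$ for all $T_{i}\in\mathfrak{C}(G_{i}\setminus\{v\})$. (If some $G_{i}$ is the single vertex $\{v\}$ then $G=G_{3-i}$ and the statement is trivial, so one may assume $|V(G_{i})|\geq 2$.)

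For the case analysis: if $T\in\mathcal{A}$, say $T=S_{1}\cup S_{2}$ with $S_{i}\in\mathfrak{C}(G_{i})$ and $v\notin T$, then $G\setminus T$ arises from the disjoint graphs $G_{1}\setminus S_{1}$ and $G_{2}\setminus S_{2}$ by identifying the vertex $v$ lying in both, which fuses exactly the two $v$-containing components into one; so $c_{G}(T)=c_{G_{1}}(S_{1})+c_{G_{2}}(S_{2})-1=(|S_{1}|+1)+(|S_{2}|+1)-1=|T|+1$ by (ii). If $T\in\mathcal{B}$, say $T=T_{1}\cup S_{2}$ with $T_{1}\in\mathfrak{C}(G_{1}\setminus\{v\})$ and $v\in S_{2}\in\mathfrak{C}(G_{2})$, then $v\in T$, so by the first observation $G\setminus T$ is the disjoint union of $(G_{1}\setminus\{v\})\setminus T_{1}$ and $G_{2}\setminus S_{2}$; hence $c_{G}(T)=c_{G_{1}\setminus\{v\}}(T_{1})+c_{G_{2}}(S_{2})=(|T_{1}|+1)+|S_{2}|=|T|+1$, using the preliminary identity and (ii). The case $T\in\mathcal{C}$ is symmetric. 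If $T\in\mathcal{D}$, say $T=T_{1}\cup T_{2}\cup\{v\}$, then $G\setminus T$ is the disjoint union of $(G_{1}\setminus\{v\})\setminus T_{1}$ and $(G_{2}\setminus\{v\})\setminus T_{2}$, so $c_{G}(T)=(|T_{1}|+1)+(|T_{2}|+1)=|T_{1}|+|T_{2}|+2=|T|+1$. This exhausts $\mathfrak{C}(G)$, so $J_{G}$ is unmixed.

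The only delicate point, and the step I would write out most carefully, is the component bookkeeping of $G\setminus T$, especially the ``$-1$'' in case $\mathcal{A}$, which records that $v$ survives there and therefore glues a component of $G_{1}\setminus S_{1}$ to a component of $G_{2}\setminus S_{2}$. Everything else reduces to the two preliminary observations together with the disjointness $(V(G_{1})\setminus\{v\})\cap(V(G_{2})\setminus\{v\})=\emptyset$, which makes all the cardinalities additive.
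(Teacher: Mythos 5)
Your proof is correct and takes essentially the same route as the paper's: decompose $\mathfrak{C}(G)$ into $\mathcal{A}\cup\mathcal{B}\cup\mathcal{C}\cup\mathcal{D}$ via Proposition~\ref{cutset} and verify $c_{G}(T)=\vert T\vert+1$ family by family; your two preliminary observations (that hypothesis (ii) forces each $G_{i}$ to be connected with $v$ not a cut vertex of $G_{i}$, so that hypothesis (i) really does yield $c_{G_{i}\setminus\{v\}}(T_{i})=\vert T_{i}\vert+1$) make explicit a step the paper's proof leaves implicit.

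The one inaccurate remark is the parenthetical claiming that the degenerate case $G_{i}=(\{v\},\emptyset)$ is ``trivial''. There $G_{i}\setminus\{v\}$ is the empty graph, your $\mathcal{B}$-computation returns $c_{G}(T)=0+\vert S_{2}\vert=\vert T\vert$ rather than $\vert T\vert+1$, and the conclusion can genuinely fail: taking $G_{2}$ to be the graph of \Cref{figsu1} with vertex $9$ deleted and $v=3$, conditions (i) and (ii) hold (this is exactly what the proof of Theorem~\ref{identifythm} extracts from unmixedness of the ambient graph), yet $J_{G_{2}}$ is not unmixed, since $\{3,4\}$ is a cutset whose removal leaves only the two components $\{1,7\}$ and $\{2,5,6,8\}$. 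This is really a defect of the statement itself --- it tacitly assumes both $G_{i}\setminus\{v\}$ are nonempty, an assumption the paper's own proof also makes silently --- so it does not touch the substance of your argument, but the honest fix is to add that hypothesis rather than to dismiss the degenerate case as trivial.
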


\begin{proof}
We have $\mathfrak{C}(G)=\mathcal{A}\cup\mathcal{B}\cup\mathcal{C}\cup\mathcal{D}$ as described in Proposition \ref{cutset}. Let $T\in \mathcal{A}$. Then $T=S_{1}\cup S_{2}$, where $v\not\in S_{i}\in \mathfrak{C}(G_{i})$. Therefore by given condition (ii), we have $c_{G_{i}}(S_{i})=\vert S_{i}\vert +1$ and one component of $G_{i}\setminus S_{i}$ contains $v$ for $i=1,2$. Thus, $c_{G}(T)= \vert T\vert +1$ is clear. Let $T\in \mathcal{B}$. Then $T=T_{1}\cup S_{2}$, where $v\in S_{2}\in \mathfrak{C}(G_{2})$ and $T_{1}\in \mathfrak{C}(G_{1}\setminus \{v\})$. By condition (i) and (ii), we have $c_{G_{1}\setminus \{v\}}(T_{1})=\vert T_{1}\vert +1$ and $c_{G_{2}}(S_{2})=\vert S_{2}\vert$. So it is clear that $c_{G}(T)=\vert T\vert+1$. The case of $T\in \mathcal{C}$ is similar. If $T\in \mathcal{D}$, then $T= T_{1}\cup T_{2}\cup \{v\}$ with $\mathcal{N}_{G_{i}}(v)\not\subseteq T_{i}\in \mathfrak{C}(G_{i}\setminus\{v\})$. By condition (i), $c_{G}(T)=\vert T_{1}\vert +1+\vert T_{2}\vert +1=\vert T\vert+1$. Hence we can conclude that $J_{G}$ is unmixed.
\end{proof}

\begin{proposition}
Let $G$ be an accessible graph and $v_{1},v_{2}$ be two cut vertices of $G$ such that $\{v_{1},v_{2}\}\not\in E(G)$. If $G\setminus \{v_{1}\}$ and $G\setminus \{v_{2}\}$ are accessible, then $G\setminus \{v_{1},v_{2}\}$ is accessible. 
\end{proposition}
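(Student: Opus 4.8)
The plan is to peel off the two cut vertices one at a time while tracking connected components, and then reduce everything to the accessibility of a single "middle" induced subgraph. First I would show $\{v_1,v_2\}\in\mathfrak{C}(G)$. Let $C_1$ be the component of $G\setminus\{v_1\}$ containing $v_2$. If $v_2$ were not a cut vertex of $C_1$, then $C_1\setminus\{v_2\}$ would be connected; since $\{v_1,v_2\}\notin E(G)$, the vertex $v_1$ has a neighbour in $C_1\setminus\{v_2\}$ and a neighbour in every other component of $G\setminus\{v_1\}$, so $G\setminus\{v_2\}$ would be connected, contradicting that $v_2$ is a cut vertex of $G$. Symmetrically $v_1$ is a cut vertex of $G\setminus\{v_2\}$, so $\{v_1,v_2\}\in\mathfrak{C}(G)$.

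Next I would pin down the shape of $G$. Since $G$, $G\setminus\{v_1\}$ and $G\setminus\{v_2\}$ are accessible, the ideals $J_G$, $J_{G\setminus\{v_1\}}$, $J_{G\setminus\{v_2\}}$ are unmixed, so by the height formula $c_G(\{v_1\})=2$, and likewise for every cut vertex of the accessible pieces appearing below. Thus $G\setminus\{v_1\}=C_1\sqcup C_2$ with $v_2\in C_1$; $C_1\setminus\{v_2\}=E_1\sqcup E_2$; and, comparing with the component of $G\setminus\{v_2\}$ containing $v_1$ (which after deleting $v_1$ must again split into exactly two parts), $v_1$ is adjacent to exactly one of $E_1,E_2$ — say $E_1$ — and not to $E_2$, while $v_2$ is not adjacent to $C_2$. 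Hence $G$ has the path-like shape $E_2-v_2-E_1-v_1-C_2$, with $G\setminus\{v_2\}=D_1\sqcup E_2$, $D_1=G[\{v_1\}\cup V(E_1)\cup V(C_2)]$, and
$$G\setminus\{v_1,v_2\}=E_1\sqcup E_2\sqcup C_2.$$
Here $C_2$ is accessible (a component of $G\setminus\{v_1\}$) and $E_2$ is accessible (it is the component $D_2$ of $G\setminus\{v_2\}$). Since accessibility of a graph is equivalent to accessibility of each of its components, it remains to prove that $E_1$ is accessible.

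For unmixedness of $J_{E_1}$ I would argue by contradiction: suppose $W^{*}\in\mathfrak{C}(E_1)$ with $c_{E_1}(W^{*})\neq|W^{*}|+1$. Decomposing $C_1=G[\{v_2\}\cup V(E_1)]\cup G[\{v_2\}\cup V(E_2)]$ along $v_2$ and using Proposition \ref{cutset}: if $\mathcal{N}_{E_1}(v_2)\not\subseteq W^{*}$, then $W^{*}\cup\{v_2\}\in\mathfrak{C}(C_1)$ with $c_{C_1}(W^{*}\cup\{v_2\})=1+c_{E_1}(W^{*})$, and unmixedness of $J_{C_1}$ forces $c_{E_1}(W^{*})=|W^{*}|+1$, a contradiction; so $\mathcal{N}_{E_1}(v_2)\subseteq W^{*}$, and symmetrically (via $D_1$) $\mathcal{N}_{E_1}(v_1)\subseteq W^{*}$. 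But then no neighbour of $v_1$ or $v_2$ in $E_1$ survives in $G\setminus W^{*}$, and a direct check of the cut-vertex condition (comparing the number of components of $C_1\setminus(W^{*}\setminus\{t\})$, resp. $G\setminus(W^{*}\setminus\{t\})$, with that of $C_1\setminus W^{*}$, resp. $G\setminus W^{*}$) shows $W^{*}\in\mathfrak{C}(C_1)$ and $W^{*}\in\mathfrak{C}(G)$, with $c_{C_1}(W^{*})=1+c_{E_1}(W^{*})$ and $c_G(W^{*})=2+c_{E_1}(W^{*})$. Unmixedness of $J_{C_1}$ and of $J_G$ then give $c_{E_1}(W^{*})=|W^{*}|$ and $c_{E_1}(W^{*})=|W^{*}|-1$ — impossible. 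Hence $J_{E_1}$, and therefore $J_{G\setminus\{v_1,v_2\}}$, is unmixed.

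Finally, for the accessible set system property of $\mathfrak{C}(E_1)$: now $J_{C_1\setminus\{v_2\}}$ and $J_{D_1\setminus\{v_1\}}$ are unmixed, so Proposition \ref{accpropunm} gives $\mathfrak{C}(C_1\setminus\{v_2\})=\{T\mid T\cup\{v_2\}\in\mathfrak{C}(C_1)\}$ and its analogue for $D_1$; in particular, for each nonempty $W\in\mathfrak{C}(E_1)$ one has $W\cup\{v_2\}\in\mathfrak{C}(C_1)$, $W\cup\{v_1\}\in\mathfrak{C}(D_1)$, and (by a similar direct computation, using $\mathcal{N}_{E_1}(v_i)\not\subseteq W$, which follows from the previous step) $W\cup\{v_1,v_2\}\in\mathfrak{C}(G)$. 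Applying accessibility of $C_1$, $D_1$ or $G$ to these cutsets, whenever the removable element lies in $W$ the descriptions above translate it back into a $t\in W$ with $W\setminus\{t\}\in\mathfrak{C}(E_1)$; and by Proposition \ref{accnoncut} this is forced as soon as $W$ contains a non-cut vertex of $C_1$, of $D_1$, or of $G$ (recall $v_1,v_2$ are cut vertices of $C_1$, of $D_1$ and of $G$). The one case needing care is when every element of $W$ is simultaneously a cut vertex of $C_1$, of $D_1$ and of $G$; here I would again play the three accessible set systems $\mathfrak{C}(C_1)$, $\mathfrak{C}(D_1)$, $\mathfrak{C}(G)$ against one another, using the rigidity of the path-like shape together with the component-count identities of the previous step, to locate a removable element of $W$. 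I expect this last case to be the main obstacle to carry out cleanly.
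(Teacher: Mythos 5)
Your setup and your unmixedness argument are essentially the paper's: the same decomposition of $G\setminus\{v_1,v_2\}$ into three pieces of which only the middle one $E_1$ (the paper's $H$) is in question, and the same component-counting obtained by adjoining $v_2$, $v_1$, or both to a cutset of $E_1$ and playing the unmixedness of $J_{C_1}$, $J_{D_1}$ and $J_G$ against each other. That part is correct (your organisation by contradiction versus the paper's direct computation of $c_H(T)=|T|+1$ is immaterial).

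The gap is exactly where you flag it, and it is a genuine one: in the accessible-set-system step, applying accessibility of $G$ to $W\cup\{v_1,v_2\}$ (or of $C_1$ to $W\cup\{v_2\}$, or of $D_1$ to $W\cup\{v_1\}$) only produces a removable element $t$ that might be $v_1$ or $v_2$, and Proposition \ref{accnoncut} rescues you only when $W$ contains a non-cut vertex of the ambient graph. When every element of $W$ is simultaneously a cut vertex of $G$, $C_1$ and $D_1$, none of your three set systems is forced to hand you a $t\in W$, and "playing them against one another" is not a proof; this is precisely the hard case, and it does not follow from the rigidity of the path-like shape alone. The paper does not attempt this from scratch: having established that $J_{C_1\setminus\{v_2\}}=J_{E_1\sqcup E_2}$ is unmixed, it invokes \cite[Proposition 5.14]{acc} (a connected accessible graph minus a cut vertex is accessible once the resulting binomial edge ideal is unmixed) applied to $C_1$ and its cut vertex $v_2$, which yields accessibility of $E_1\sqcup E_2$, hence of $E_1$, in one stroke. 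So the repair is not to refine your case analysis but to recognise that your unmixedness computation is exactly the hypothesis needed to quote that result, whose proof is where the difficulty you ran into is actually resolved.
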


\begin{proof}
If $G$ is disconnected and $v_{1}$, $v_{2}$ belong to two different connected components of $G$, then by definition of accessibility $G\setminus \{v_{1},v_{2}\}$ is accessible. So we may assume $G$ is connected. Since $v_{1}$ is not adjacent to $v_{2}$, $\{v_{1},v_{2}\}\in \mathfrak{C}(G)$ and $J_{G}$ being unmixed, $c_{G}(\{v_{1},v_{2}\})=3$. Let $H_{1}$, $H^{\prime}$ be two connected components of $G\setminus \{v_{1}\}$ with $v_{2}\in V(H_{1})$ and $H_{2}$, $H^{\prime\prime}$ be two connected components of $G\setminus \{v_{2}\}$ with $v_{1}\in V(H_{2})$. Then $H_{1}\setminus (V(H^{\prime\prime})\cup \{v_{2}\})=H_{2}\setminus (V(H^{\prime})\cup \{v_{1}\})=H$ (say) and $H^{\prime}, H, H^{\prime\prime}$ are three connected components of $G\setminus \{v_{1},v_{2}\}$. Since $G\setminus \{v_{i}\}$ is accessible for $i=1,2$, we have $H_{1},H^{\prime},H_{2},H^{\prime\prime}$ are accessible. So, it is enough to show that $H$ is accessible. Let $T\in \mathfrak{C}(H)$. Suppose $\mathcal{N}_{H_{1}}(v_{1})\subseteq T$. Since $G$ and $G\setminus \{v_{1}\}$ are accessible, by Proposition \ref{accpropunm}, $T\not\in \mathfrak{C}(G\setminus \{v_{1}\})$ i.e., $T\not \in \mathfrak{C}(H_{1})$. But $T\in \mathfrak{C}(H)$ which imply there must exists $w\in \mathcal{N}_{H}(v_{2})$ such that $w\not\in T$. Then $v_{2}$ is a cut point in $H_{1}\setminus(T\cup\{v_{2}\}\setminus \{v_{2}\})$ and obviously, every $t\in T$ is a cut vertex in $H_{1}\setminus (T\cup\{v_{2}\}\setminus \{t\})$. Therefore, $T\cup\{v_{2}\}\in \mathfrak{C}(H_{1})$ and $H_{1}$ being unmixed, we have $c_{H_{1}}(T\cup\{v_{2}\})=\vert T\vert +2$. Thus, $c_{H}(T)=\vert T\vert +1$. Similarly, if $\mathcal{N}_{H_{2}}(v_{2})\subseteq T$, then also we get $c_{H}(T)=\vert T\vert +1$. Now, assume $\mathcal{N}_{H_{1}}(v_{1})\not\subseteq T$ and $\mathcal{N}_{H_{2}}(v_{2})\not\subseteq T$. Since $\{v_{1},v_{2}\}\not\in E(G)$, $v_{1}$ is a cut vertex in $G\setminus (T\cup \{v_{2}\})$ and $v_{2}$ is a cut vertex in $G\setminus (T\cup \{v_{1}\})$. Thus, $T\cup\{v_{1},v_{2}\}\in \mathfrak{C}(G)$ and $c_{G}(T\cup\{v_{1},v_{2}\})=\vert T\vert +3$ as $J_{G}$ is unmixed. Since $c_{G}(\{v_{1},v_{2}\})=3$ and $T\subseteq V(H)$, we have $c_{H}(T)=\vert T\vert +1$. Hence $J_{H}$ is unmixed and so, $J_{G\setminus \{v_{1},v_{2}\}}$ is unmixed. Since $H_{1}$ is connected accessible graph and $v_{2}$ is a cut vertex in $H_{1}$, by \cite[Proposition 5.14]{acc}, $H_{1}\setminus \{v_{2}\}$ is accessible. Therefore we get $H$ is accessible and so is $G\setminus\{v_{1},v_{2}\}$.
\end{proof}

\begin{proposition}\label{freacc}
Let $G$ be a simple graph such that $J_{G}$ is unmixed and $G\setminus\{v\}$ is accessible for a free vertex $v\in V(G)$. Then $G$ is accessible. 
\end{proposition}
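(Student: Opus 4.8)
The plan is to use that $J_{G}$ is already unmixed, so that it suffices to prove $\mathfrak{C}(G)$ is an accessible set system. We may reduce to $G$ connected with $v$ of positive degree (an isolated $v$ being immediate, since $G\setminus\{v\}$ is then accessible and $\{v\}$ is complete). Put $H=G\setminus\{v\}$. The first step is to pin down how $\mathfrak{C}(G)$ lies over $\mathfrak{C}(H)$. Since $v$ is free, $\mathcal{N}_{G}(v)$ is a clique, so by \cite[Proposition 2.1]{raufrin} no cutset of $G$ contains $v$; hence every $T\in\mathfrak{C}(G)$ is contained in $V(H)$. A short component count — when $v$ is restored to $H\setminus S$ it either stays isolated (exactly when $\mathcal{N}_{G}(v)\subseteq S$) or is absorbed into a single component, because its neighbourhood is a clique — gives $\mathfrak{C}(H)\subseteq\mathfrak{C}(G)$, and combining this with the unmixedness of $J_{G}$ and of $J_{H}$ (the latter from $H$ accessible) I would deduce
\[
\mathfrak{C}(H)=\{T\in\mathfrak{C}(G)\mid\mathcal{N}_{G}(v)\not\subseteq T\},\qquad
\mathfrak{C}(G)\setminus\mathfrak{C}(H)=\{T\in\mathfrak{C}(G)\mid\mathcal{N}_{G}(v)\subseteq T\}.
\]

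Next, take a non-empty $T\in\mathfrak{C}(G)$; I must produce $t\in T$ with $T\setminus\{t\}\in\mathfrak{C}(G)$. If $\mathcal{N}_{G}(v)\not\subseteq T$ then $T\in\mathfrak{C}(H)$, so accessibility of $H$ yields $t\in T$ with $T\setminus\{t\}\in\mathfrak{C}(H)\subseteq\mathfrak{C}(G)$, finishing this case. So assume $\mathcal{N}_{G}(v)\subseteq T$. Then $\{v\}$ is a connected component of $G\setminus T$; writing $C_{1},\dots,C_{m}$ for the other components, unmixedness of $J_{G}$ forces $m=|T|$. For $t\in T$ let $\mathcal{D}_{t}$ be the set of components of $G\setminus T$ adjacent to $t$; since $t$ is a cut vertex of $G\setminus(T\setminus\{t\})$ we have $|\mathcal{D}_{t}|\ge 2$, and by bookkeeping which components merge with $t$ when $t$ is put back one checks
\[
T\setminus\{t\}\in\mathfrak{C}(G)\iff\mathcal{D}_{s}\not\subseteq\mathcal{D}_{t}\ \text{ for every }s\in T\setminus\{t\}.
\]
So it remains to exhibit $t\in T$ whose component-neighbourhood $\mathcal{D}_{t}$ contains no other $\mathcal{D}_{s}$.

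To locate such a $t$, observe that $\{v\}\in\mathcal{D}_{t}$ precisely when $t\in\mathcal{N}_{G}(v)$, so a $\mathcal{D}_{s}$ with $s\in\mathcal{N}_{G}(v)$ can never lie inside a $\mathcal{D}_{t}$ with $t\notin\mathcal{N}_{G}(v)$. Hence if $T\supsetneq\mathcal{N}_{G}(v)$ I would choose $t\in T\setminus\mathcal{N}_{G}(v)$ with $\mathcal{D}_{t}$ $\subseteq$-minimal among those, and if $T=\mathcal{N}_{G}(v)$ I would choose $t\in T$ with $\mathcal{D}_{t}$ $\subseteq$-minimal; in either case the only way the chosen $t$ can fail is that its minimal $\mathcal{D}_{t}$ is attained by a second vertex of the relevant set. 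I expect to rule this out as follows: from $T\notin\mathfrak{C}(H)$ one extracts a vertex of $\mathcal{N}_{G}(v)\cap T$ adjacent to exactly one $C_{i}$, and a repeated $\subseteq$-minimal component-neighbourhood would permit deleting all but one of the vertices realising it, yielding a set whose component count in $H$ (resp. $G$) contradicts unmixedness of $J_{H}$ (resp. $J_{G}$), with Theorem \ref{accthm} supplying the structural facts about the accessible graph $H$ that close off the remaining configurations.

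This last step — excluding the ``balanced'' incidence patterns between $T$ and the components of $G\setminus T$ in which no single vertex is removable — is where I expect the real difficulty to lie; the rest is the routine reduction above. All three hypotheses are needed for it: unmixedness of $J_{G}$, unmixedness of $J_{H}$, and the full accessibility (not just unmixedness) of $H$.
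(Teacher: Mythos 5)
Your reduction is sound as far as it goes: $v\notin T$ for every cutset, the identification $\mathfrak{C}(H)=\{T\in\mathfrak{C}(G)\mid\mathcal{N}_{G}(v)\not\subseteq T\}$ does follow from \cite[Lemma 2.2]{raufrin} together with the two unmixedness hypotheses, the case $\mathcal{N}_{G}(v)\not\subseteq T$ is correctly dispatched, and the criterion $T\setminus\{t\}\in\mathfrak{C}(G)\iff\mathcal{D}_{s}\not\subseteq\mathcal{D}_{t}$ for all $s\in T\setminus\{t\}$ is a correct (and rather clean) reformulation; likewise the extraction from $T\notin\mathfrak{C}(H)$ of a vertex $s_{1}\in\mathcal{N}_{G}(v)\cap T$ adjacent to exactly one $C_{i}$ is right and is essentially the paper's first move. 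But the argument stops exactly where the proposition ceases to be routine. Choosing $\mathcal{D}_{t}$ $\subseteq$-minimal only excludes $\mathcal{D}_{s}\subsetneq\mathcal{D}_{t}$, not $\mathcal{D}_{s}=\mathcal{D}_{t}$, and your device for excluding ties does not work as described: if $\mathcal{D}_{s}=\mathcal{D}_{t}$ then, by your own criterion, neither $T\setminus\{s\}$ nor $T\setminus\{t\}$ lies in $\mathfrak{C}(G)$, so ``deleting all but one of the vertices realising'' the repeated neighbourhood does not produce a cutset of $G$ or of $H$ whose component count could be played against unmixedness, and Theorem \ref{accthm} (every nonempty cutset contains a cut vertex, the cut vertices induce a connected subgraph, etc.) gives no handle on this configuration. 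Moreover, your commitment to locating the removable vertex in $T\setminus\mathcal{N}_{G}(v)$ whenever $T\supsetneq\mathcal{N}_{G}(v)$ is itself unjustified: in the paper's proof the removed vertex is sometimes forced to be $s_{1}\in\mathcal{N}_{G}(v)$ (namely when $T$ meets the component attached to $s_{1}$ only in $s_{1}$ itself, and in the whole subcase $|\mathcal{N}_{G}(v)|=1$), and nothing in your sketch shows that a second removable vertex outside $\mathcal{N}_{G}(v)$ exists there.

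What the paper does at this point is qualitatively different and is the real content of the proof. Having found $s_{1}\in\mathcal{N}_{G}(v)$ adjacent to a unique component $A_{s_{1}}$ of $H\setminus\mathcal{N}_{G}(v)$, it splits $T=T'\sqcup\{s_{1}\}\sqcup T''$ with $T'=V(A_{s_{1}})\cap T$, checks that $T''\in\mathfrak{C}(H)$ and (when $T'\ne\phi$) that $T'\cup\{s_{1}\}\in\mathfrak{C}(H)$, and then applies the accessibility of $H$ together with Proposition \ref{accnoncut} to this localised sub-cutset to peel off a vertex $t'\in T'$; because $s_{1}$ and $A_{s_{1}}$ are not adjacent to the remaining components, the pieces reassemble to give $T\setminus\{t'\}\in\mathfrak{C}(G)$ (and when $T'=\phi$ one removes $s_{1}$ itself). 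The essential input is thus the accessibility of $H$ applied to a carefully chosen sub-cutset, not a global minimality-plus-counting argument. To complete your approach you would need an argument of comparable strength for the tie-breaking step, and none is currently present; as written, the existence of a $t$ with no $\mathcal{D}_{s}\subseteq\mathcal{D}_{t}$ --- which is equivalent to the statement being proved in this case --- is asserted rather than established.
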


\begin{proof}
From the proof of \cite[Lemma 2.2]{raufrin}, we have $\mathfrak{C}(G\setminus\{v\})\subseteq \mathfrak{C}(G)$ for a free vertex $v$ of $G$. Let $T\in \mathfrak{C}(G)$. Then $v\not\in T$ as $v$ is a free vertex of $G$. Set $H=G\setminus\{v\}$. We will consider two cases:\par 

\noindent\textbf{Case-I:} Let $\mathcal{N}_{G}(v)\not\subseteq T$. Then $T\in \mathfrak{C}(H)$ by \cite[Lemma 2.2]{raufrin}. Since $H$ is accessible, $T$ is accessible as a cutset of $H$ as well as of $G$.\par 

\noindent\textbf{Case-II:} Assume $\mathcal{N}_{G}(v)\subseteq T$. Then $S=\mathcal{N}_{G}(v)\in \mathfrak{C}(G)$ is clear. As $J_{G}$ is unmixed $c_{G}(S)=\vert S\vert +1$. Then $H\setminus S$ has $\vert S\vert$ connected components. So, $S\not\in \mathfrak{C}(H)$ as $J_{H}$ is unmixed. Let $S=\{s_{1},\ldots,s_{k}\}$ and connected components of $G\setminus S$ are $A_{s_{1}},\ldots, A_{s_{k}},\{v\}$. Assume $k>1$. Since $S\not\in \mathfrak{C}(H)$, there exists an $s_{i}$ which is not a cut vertex in $H\setminus (S\setminus\{s_{i}\})$, but $s_{i}$ is a cut vertex in $G\setminus (S\setminus\{s_{i}\})$. Therefore, $s_{i}$ is adjacent to only one connected components of $H\setminus S$, say $A_{s_{i}}$. For simplicity of notations, let $i=1$ i.e., $s_{1}$ is a cut vertex in $G$ and it is adjacent to only $A_{s_{1}}$ in $H\setminus (S\setminus\{s_{1}\})$. Then clearly $S\setminus\{s_{1}\}\in \mathfrak{C}(H)$ and thus $S$ is accessible. Let $V(A_{s_{1}})\cap T=T^{\prime}$. Since $s_{1}$ and $A_{s_{1}}$ are not adjacent to $A_{s_{2}},\ldots, A_{s_{k}}$, we have $T^{\prime\prime}=T\setminus (T^{\prime}\cup \{s_{1}\})\in \mathfrak{C}(H)$ and $T^{\prime}\cup\{s_{1}\}\in \mathfrak{C}(G)$. We have $ V(A_{s_{1}})\setminus T^{\prime}\neq \phi$ and $\mathcal{N}_{A_{s_{1}}}(s_{1})\not\subseteq T^{\prime}$ otherwise, $s_{1}$ can not be a cut vertex of $G\setminus T^{\prime}$. Since $k>1$, $s_{1}$ is a cut point in $H\setminus T^{\prime}$. Let $T^{\prime}\neq \phi$. Then every $t\in T^{\prime}$ is a cut point of $H\setminus (T^{\prime}\cup\{s_{1}\}\setminus \{t\})$. Hence $T^{\prime}\cup\{s_{1}\}\in \mathfrak{C}(H)$. By accessibility of $H$ and Proposition \ref{accnoncut}, 
\begin{equation}\label{eq1}
(T^{\prime}\cup\{s_{1}\})\setminus \{t^{\prime}\}\in \mathfrak{C}(H)
\end{equation} 
for some $t^{\prime}\in T^{\prime}$. Now $T^{\prime\prime}\in \mathfrak{C}(H)$ and $s_{1}\not\in T^{\prime\prime}$ is a cut vertex of $G$ with $\mathcal{N}_{H}(s_{1})\not\subseteq T^{\prime\prime}$ together imply $T^{\prime\prime}\cup
\{s_{1}\}\in \mathfrak{C}(G)$. So, we have by \ref{eq1},
$$(T^{\prime\prime}\cup\{s_{1}\}\cup T^{\prime})\setminus\{t^{\prime}\}=T\setminus\{t^{\prime}\}\in \mathfrak{C}(G).$$ 
Thus, $T$ is accessible. If $T^{\prime}=\phi$, then $T^{\prime\prime}\in \mathfrak{C}(H)$ implies $T\setminus\{s_{1}\}=T^{\prime\prime}\in \mathfrak{C}(G)$ and so, $T$ is accessible. Now assume $k=1$. Then any $T\in \mathfrak{C}(G)$ with $s_{1}\not\in T$ implies $T\in \mathfrak{C}(H)$ and we are done. Let $T\in \mathfrak{C}(G)$ such that $s_{1}\in T$. Then $\mathcal{N}_{G}(s_{1})\setminus\{v\}\not\subseteq T$. Suppose $T\setminus\{s_{1}\}\not\in \mathfrak{C}(H)$. Then there exists $t\in T\setminus\{s_{1}\}$ such that $t$ is not a cut point of $(G\setminus\{v\})\setminus (T\cup\{s_{1}\}\setminus\{t\})=H_{t}$. Now $s_{1},t\in V(H_{t})$ is clear and $t$ is a cut point of $G\setminus(T\setminus\{t\})$. So, there exists  $x,y\in \mathcal{N}_{H}(s_{1})$ such that $x,y$ are not connected in $H\setminus T$. Therefore, $T\in \mathfrak{C}(H)$. Thus, for $k=1$ either $T\in \mathfrak{C}(H)$ or $T\setminus\{s_{1}\}\in \mathfrak{C}(H)$ and for both the cases $T$ is accessible as a cutset of $G$. Hence $G$ is accessible.
\end{proof}

\begin{lemma}\label{Gvunmacc}
Let $G=G_{1}\cup G_{2}$ be a graph such that $V(G_{1})\cap V(G_{2})=\{v\}$. Then $\mathfrak{C}(G_{v})=\mathfrak{C}((G_{1})_{v})\cup \mathfrak{C}((G_{2})_{v})$. In particular, $J_{(G_{1})_{v}}$ and $J_{(G_{2})_{v}}$ are unmixed (resp, accessible) if and only if $J_{G_{v}}$ is unmixed (resp, accessible).
\end{lemma}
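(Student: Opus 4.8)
The plan is to compare $G_v$ with the auxiliary graph $H:=(G_1)_v\cup(G_2)_v$, which (unlike $G_v$ itself) satisfies $V((G_1)_v)\cap V((G_2)_v)=\{v\}$ and hence is a graph to which Proposition \ref{cutset} applies verbatim, with $(G_1)_v,(G_2)_v$ in the roles of $G_1,G_2$. Writing $A=\mathcal{N}_{G_1}(v)$ and $B=\mathcal{N}_{G_2}(v)$, so that $\mathcal{N}_G(v)=A\sqcup B$, one checks immediately that $E(G_v)$ exceeds $E(H)$ exactly by the ``cross'' edges $\{a,b\}$ with $a\in A$, $b\in B$ coming from the clique imposed on $\mathcal{N}_G(v)$. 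The first things to record are: $v$ is a free vertex of each of $G_v,(G_1)_v,(G_2)_v$ (in each, the neighbourhood of $v$ is a clique), so by the consequence of \cite[Proposition 2.1]{raufrin} quoted in Section \ref{preli}, $v$ lies in no cutset of any of these three graphs; and we may assume $G$, hence $G_v,(G_1)_v,(G_2)_v$, connected (a vertex of $G_i$ reaches $v$ along a path that cannot leave $V(G_i)$).

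The technical heart is the claim that $\mathfrak{C}(G_v)=\{T\in\mathfrak{C}(H):v\notin T\}$, and moreover $c_{G_v}(T)=c_H(T)$ for such $T$. I would prove this from the observation that each cross edge $\{a,b\}$ is redundant for connectivity whenever $v$ is present: for $T\subseteq V(G)$ with $v\notin T$ and any $t\in T$, deleting $T\setminus\{t\}$ (and then, if desired, $t$, or all of $T$) leaves $v$ in the graph, and $a-v-b$ is a path of $H$ through $v$, since $\{v,a\}\in E(G_1)\subseteq E(H)$ and $\{v,b\}\in E(G_2)\subseteq E(H)$; hence the corresponding induced subgraphs of $G_v$ and of $H$ have identical connected components. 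In particular $t$ is a cut vertex of $G_v\setminus(T\setminus\{t\})$ exactly when it is one of $H\setminus(T\setminus\{t\})$, and $c_{G_v}(T)=c_H(T)$, so $T\in\mathfrak{C}(G_v)\Leftrightarrow T\in\mathfrak{C}(H)$ for all $T$ avoiding $v$. Since no cutset of $G_v$ contains $v$, this gives $\mathfrak{C}(G_v)=\{T\in\mathfrak{C}(H):v\notin T\}$. Now feed this into Proposition \ref{cutset} applied to $H$: freeness of $v$ in $(G_1)_v$ and $(G_2)_v$ forces the families $\mathcal{B},\mathcal{C}$ there to be empty and $\mathcal{D}$ to consist of sets containing $v$, so the cutsets of $H$ avoiding $v$ are precisely $\mathcal{A}=\{S_1\cup S_2:S_i\in\mathfrak{C}((G_i)_v)\}$; this is the asserted description of $\mathfrak{C}(G_v)$.

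For the ``in particular'' part: since everything is connected, $J_{(G_i)_v}$ is unmixed iff $c_{(G_i)_v}(S)=\lvert S\rvert+1$ for all $S\in\mathfrak{C}((G_i)_v)$, and likewise for $G_v$. For $T=S_1\cup S_2\in\mathfrak{C}(G_v)$ with $S_i\in\mathfrak{C}((G_i)_v)$ and $v\notin S_i$, gluing at $v$ gives $c_{G_v}(T)=c_H(T)=c_{(G_1)_v}(S_1)+c_{(G_2)_v}(S_2)-1$; taking $S_2=\emptyset$ and then $S_1=\emptyset$ shows unmixedness of $J_{G_v}$ forces that of $J_{(G_1)_v}$ and $J_{(G_2)_v}$, and adding the two identities gives the converse. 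For accessibility: if $\emptyset\neq S_1\cup S_2\in\mathfrak{C}(G_v)$, say $S_1\neq\emptyset$, then accessibility of $\mathfrak{C}((G_1)_v)$ yields $t\in S_1$ with $S_1\setminus\{t\}\in\mathfrak{C}((G_1)_v)$, so $(S_1\cup S_2)\setminus\{t\}\in\mathfrak{C}(G_v)$; conversely a nonempty $S_1\in\mathfrak{C}((G_1)_v)$ is the cutset $S_1\cup\emptyset$ of $G_v$, and any $S_1\setminus\{t\}\in\mathfrak{C}(G_v)$ lies inside $V(G_1)$, so by the cutset description its $(G_2)_v$-part is contained in $\{v\}$, hence empty, placing $S_1\setminus\{t\}$ in $\mathfrak{C}((G_1)_v)$. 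Combined with the unmixedness equivalence, this gives the accessibility equivalence. The hard part will be the redundancy argument of the second paragraph — carefully checking that erasing the cross edges changes none of the connected-component counts entering the cutset condition, for every intermediate deletion; once that is in hand the rest is bookkeeping over Proposition \ref{cutset}.
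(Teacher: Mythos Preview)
Your argument is correct. The redundancy observation --- that every cross edge $\{a,b\}$ with $a\in\mathcal{N}_{G_1}(v)$, $b\in\mathcal{N}_{G_2}(v)$ is shadowed by the path $a$--$v$--$b$ whenever $v$ survives --- is exactly the point, and you spell it out carefully. The conclusion $\mathfrak{C}(G_v)=\{S_1\cup S_2:S_i\in\mathfrak{C}((G_i)_v)\}$ then follows, and your component-count formula $c_{G_v}(S_1\cup S_2)=c_{(G_1)_v}(S_1)+c_{(G_2)_v}(S_2)-1$ handles the unmixed and accessible equivalences just as in the paper.

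The route, however, is more elaborate than the paper's. The paper does not introduce the auxiliary graph $H=(G_1)_v\cup(G_2)_v$ at all, and does not invoke Proposition~\ref{cutset}; it argues directly that for $T\in\mathfrak{C}(G_v)$ with $T_i=T\cap V(G_i)$, each $t\in T_i$ being a cut vertex of $G_v\setminus(T\setminus\{t\})$ forces it to be a cut vertex of $(G_i)_v\setminus(T_i\setminus\{t\})$, and conversely. Your detour through $H$ and Proposition~\ref{cutset} buys you an explicit justification of the step the paper takes for granted (namely, why the extra clique edges across $\mathcal{N}_{G_1}(v)\times\mathcal{N}_{G_2}(v)$ do not affect the cut-vertex checks), at the cost of a longer argument. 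If you wanted to shorten it, you could drop $H$ and Proposition~\ref{cutset} entirely: once you have the redundancy observation, the direct equivalence $T\in\mathfrak{C}(G_v)\Leftrightarrow T_i\in\mathfrak{C}((G_i)_v)$ for $i=1,2$ follows in one line, since deleting a vertex $t\in V(G_i)\setminus\{v\}$ can only disconnect things on the $G_i$ side when $v$ is still present.
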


\begin{proof}
Let $T\in \mathfrak{C}(G_{v})$. Set $T\cap V(G_{i})=T_{i}$ for $i=1,2$. Then $T=T_{1}\cup T_{2}$ and $v$ being a free vertex $v\not\in T$. Now, any $t\in T_{i}$ is a cut point of $G_{v}\setminus(T\setminus\{t\})$ imply $t$ is also a cut point in $(G_{i})_{v}\setminus (T_{i}\setminus\{t\})$, where $i=1,2$. Thus, $T_{i}\in \mathfrak{C}((G_{i})_{v})$ for $i=1,2$ and $\mathfrak{C}(G_{v})\subseteq \mathfrak{C}((G_{1})_{v})\cup \mathfrak{C}((G_{2})_{v})$.\par
Let $T_{i}\in \mathfrak{C}((G_{i})_{v})$ for $i=1,2$. Since $v$ is a free vertex in $(G_{i})_{v}$, $v\not\in T_{i}$ and each $t\in T_{1}\cup T_{2}$ is a cut vertex in $G_{v}\setminus(T_{1}\cup T_{2} \setminus\{t\})$, where $i\in\{1,2\}$. Therefore, $T_{1}\cup T_{2}\in \mathfrak{C}(G_{v})$ and $\mathfrak{C}(G_{v})=\mathfrak{C}((G_{1})_{v})\cup \mathfrak{C}((G_{2})_{v})$. \par 
Note that, $(G_{i})_{v}\setminus T_{i}$ contains a connected component containing $v$, where $T_{i}\in \mathfrak{C}((G_{i})_{v})$ and $i=1,2$. Thus, we have
$$c_{G_{v}}(T_{1}\cup T_{2})=c_{(G_{1})_{v}}(T_{1})+ c_{(G_{2})_{v}}(T_{2})-1.$$
Hence, $J_{(G_{1})_{v}}$ and $J_{(G_{2})_{v}}$ are unmixed (resp, accessible) if and only if $J_{G_{v}}$ is unmixed (resp, accessible).
\end{proof}

\begin{proposition}\label{Gvvsu}
Let $G=G_{1}\cup G_{2}$ be a graph such that $V(G_{1})\cap V(G_{2})=\{v\}$. If $J_{G_{v}}$ is strongly unmixed, then $J_{(G_{1})_{v}}$ and $J_{(G_{2})_{v}}$ are strongly unmixed.
\end{proposition}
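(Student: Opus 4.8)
The plan is to reduce to the decomposable setting and then run an induction on the recursive structure underlying Definition~\ref{defsu}. First I would record three elementary facts: $G_v=\big((G_1)_v\cup(G_2)_v\big)_v$; the induced subgraph $G_v[V(G_i)]$ equals $(G_i)_v$ (indeed the only edges of $G_v$ inside $V(G_i)$ are those of $G_i$ together with the clique on $\mathcal{N}_{G_i}(v)$, which is already present in $(G_i)_v$); and $(G_1)_v\cup(G_2)_v$ is a decomposable graph with $v$ a free vertex of both pieces. By Lemma~\ref{Gvunmacc}, the hypothesis that $J_{G_v}$ is unmixed already forces $J_{(G_1)_v}$ and $J_{(G_2)_v}$ to be unmixed. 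Using the standard reductions (strong unmixedness is detected on connected components, and Remark~\ref{remglu}(ii) for decomposable graphs) I would dispose of the degenerate cases $V(G_i)=\{v\}$ and $\mathcal{N}_{G_i}(v)=\emptyset$, and from now on assume $G$ connected with both pieces nontrivial and $v$ adjacent to each piece.

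I would then prove the statement by well-founded induction on the pair $\big(\,|V(G)|,\ \binom{|V(G)|}{2}-|E(G_v)|\,\big)$, ordered lexicographically. If the connected components of $G_v$ are complete then so are those of $(G_i)_v=G_v[V(G_i)]$, hence $J_{(G_i)_v}$ is strongly unmixed. Otherwise $J_{G_v}$ is unmixed and there is a cut vertex $w$ of $G_v$ such that $J_{G_v\setminus\{w\}}$, $J_{(G_v)_w}$ and $J_{(G_v)_w\setminus\{w\}}$ are strongly unmixed; as $v$ is free in $G_v$ it is not a cut vertex, so $w\neq v$, and we may assume $w\in V(G_1)\setminus\{v\}$.

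The core of the argument is to exhibit each of these three smaller graphs again in the form $(\text{decomposable at }v)_v$, so that the induction hypothesis applies. Concretely I would verify: $w$ is a cut vertex of $(G_1)_v$ as well (any path of $G_v\setminus\{w\}$ joining two vertices of $V(G_1)$ can be short-cut to stay inside $V(G_1)$ through the clique $\mathcal{N}_{G_1}(v)\cup\{v\}$, so $G_v\setminus\{w\}$ is disconnected precisely when $(G_1)_v\setminus\{w\}$ is); $(G_v)_w[V(G_1)]=((G_1)_v)_w$ and $(G_v)_w[V(G_2)]=(G_2)_v$; $v$ stays a free vertex of $(G_1)_v\setminus\{w\}$, of $((G_1)_v)_w$ and of $((G_1)_v)_w\setminus\{w\}$; and, crucially, the identities
\begin{align*}
G_v\setminus\{w\} &= \Big(\big((G_1)_v\setminus\{w\}\big)\cup(G_2)_v\Big)_v,\\
(G_v)_w &= \Big(\big((G_1)_v\big)_w\cup(G_2)_v\Big)_v,\\
(G_v)_w\setminus\{w\} &= \Big(\big(((G_1)_v)_w\setminus\{w\}\big)\cup(G_2)_v\Big)_v,
\end{align*}
each right-hand side being decomposable at $v$. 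Each of the three decomposable graphs is strictly smaller than $G$ in the chosen order (the first and third have one vertex fewer; the second has the same vertex set, but $(G_v)_w$ has strictly more edges than $G_v$ because a cut vertex is never free). Applying the induction hypothesis to them --- and using that closing a graph at a free vertex does nothing, so $\big((G_1)_v\setminus\{w\}\big)_v=(G_1)_v\setminus\{w\}$, $\big(((G_1)_v)_w\big)_v=((G_1)_v)_w$, and similarly for the third --- I get that $J_{(G_2)_v}$ is strongly unmixed and that $J_{(G_1)_v\setminus\{w\}}$, $J_{((G_1)_v)_w}$ and $J_{((G_1)_v)_w\setminus\{w\}}$ are strongly unmixed. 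Since $J_{(G_1)_v}$ is unmixed and $w$ is a cut vertex of $(G_1)_v$, Definition~\ref{defsu} then yields that $J_{(G_1)_v}$ is strongly unmixed, finishing the induction.

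The step I expect to be the main obstacle is the verification of the displayed identities, and in particular the one for $(G_v)_w$: when $w\in\mathcal{N}_{G_1}(v)$, forming $(G_v)_w$ creates new edges between the $V(G_1)$-neighbours of $w$ and the clique $\mathcal{N}_{G_2}(v)$, and one must check that these are exactly the cross-edges produced by first forming $((G_1)_v)_w$ and then re-closing at $v$; this requires a short case distinction according to whether $w$ is adjacent to $v$. A second delicate point is the choice of the well-founded order: passing from $G_v$ to $(G_v)_w$ does not reduce the number of vertices, so an induction on $|V(G)|$ alone will not work.
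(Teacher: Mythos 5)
Your proof is correct, and its skeleton --- induction, locating the cut vertex $w$ of $G_v$ inside one of the two pieces, checking it is still a cut vertex of $(G_1)_v$, the identities expressing $G_v\setminus\{w\}$ and its relatives as $v$-closures of graphs decomposable at $v$, an appeal to Lemma \ref{Gvunmacc} for unmixedness, and the final application of Definition \ref{defsu} --- is the same as the paper's. The one genuine divergence is in how strong unmixedness of $J_{((G_1)_v)_w}$ is obtained. You apply the induction hypothesis to $(G_v)_w$ itself, which is what forces the lexicographic measure $\bigl(|V(G)|,\binom{|V(G)|}{2}-|E(G_v)|\bigr)$ and the verification of the middle identity for $(G_v)_w$, i.e.\ exactly the case distinction on whether $w\in\mathcal{N}_{G_1}(v)$ that you flag as the main obstacle. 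The paper instead inducts on $|V(G_v)|$ alone and never applies the induction hypothesis to $(G_v)_w$: it only treats $(G_v)_w\setminus\{w\}$ (which has fewer vertices), deduces that $J_{((G_1)_v)_w\setminus\{w\}}$ is strongly unmixed, and then lifts to $J_{((G_1)_v)_w}$ via the free-vertex Lemma \ref{freeGsu}, since $w$ is free in $((G_1)_v)_w$. So your closing remark that induction on the number of vertices alone ``will not work'' is true only of your direct route; the free-vertex lemma is precisely the device that lets the plain induction go through. Your variant buys a small simplification in exchange --- the induction hypothesis hands you strong unmixedness of $J_{((G_1)_v)_w}$ outright, whereas Lemma \ref{freeGsu} additionally requires its unmixedness as an input, a hypothesis the paper's write-up passes over --- at the cost of the heavier well-founded order and the extra identity check.
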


\begin{proof}
Let $\vert V(G_{v})\vert=n$ and we will proceed by induction on $n$. If one of $G_{1}$ or $G_{2}$ is a graph with only one vertex which is $v$, then the result holds trivially. So assume $G_{1}, G_{2}$ are not empty (i.e a graph without edges). Then the base case will be $n=3$ and $G$ is a $P_{2}$ with $(G_{i})_{v}=K_{2}$ for each $i\in\{1,2\}$. Therefore the result follows by definition of strongly unmixed. Assume $n>3$. If $G_{v}$ is complete, then $(G_{1})_{v}, (G_{2})_{v}$ are both complete and the result follows by definition. If $G_{v}$ is non-complete, then there exists a cut vertex $u$ of $G_{v}$ such that $G_{v}\setminus\{u\}, (G_{v})_{u}, (G_{v})_{u}\setminus \{u\}$ are strongly unmixed. Note that $v$ being a free vertex of $G_{v}$, it can not be a cut vertex of $G_{v}$. Let us suppose $u\in V(G_{1})$. By Lemma \ref{Gvunmacc}, $J_{(G_{i})_{v}}$ is unmixed as $J_{G_{v}}$ is unmixed for $i=1,2$. Let $G_{v}\setminus\{u\}=H\sqcup H_{2}$, where $V(G_{2})\subseteq V(H)$. Then $(G_{1})_{v}\setminus\{u\}=H_{1}\sqcup H_{2}$, where $H_{1}=H\setminus V(G_{2}\setminus\{v\})$. Now $H, H_{2}$ are strongly unmixed and $H=(H_{1}\cup G_{2})_{v}$. Since $H$ has less than $n$ vertices, by induction hypothesis the binomial edge ideals of $(G_{2})_{v}$ and $(H_{1})_{v}=H_{1}$ are strongly unmixed. Thus, $(G_{1})_{v}\setminus \{u\}$ is strongly unmixed. Also, we observe that
$$(G_{v})_{u}\setminus\{u\}= \big(((G_{1})_{u}\setminus\{u\})\cup G_{2}\big)_{v}.$$
Since $V((G_{1})_{u}\setminus\{u\})\cap V(G_{2})=\{v\}$, using induction hypothesis, we get the binomial edge ideals of $((G_{1})_{u}\setminus\{u\})_{v}$ and $(G_{2})_{v}$ are strongly unmixed. Note that $((G_{1})_{u}\setminus\{u\})_{v}=((G_{1})_{v})_{u}\setminus\{u\}$. Therefore, $J_{((G_{1})_{v})_{u}\setminus\{u\}}$ is strongly unmixed and by Lemma \ref{freeGsu}, $J_{((G_{1})_{v})_{u}}$ is also strongly unmixed. Hence $J_{(G_{1})_{v}}$ and $J_{(G_{2})_{v}}$ are strongly unmixed. The case of $u\in V(G_{2})$ is similar.
\end{proof}

\begin{proposition}\label{Gvsu}
Let $G=G_{1}\cup G_{2}$ be a graph with $V(G_{1})\cap V(G_{2})=\{v\}$. If $J_{(G_{1})_{v}}$ and $J_{(G_{2})_{v}}$ are strongly unmixed, then $J_{G_{v}}$ is strongly unmixed.
\end{proposition}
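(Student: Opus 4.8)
The plan is to mirror the structure of the proof of Proposition \ref{Gvvsu}, but in the reverse direction, again proceeding by induction on $n = \vert V(G_v)\vert$. First I would dispose of the degenerate cases: if one of $G_1$, $G_2$ has $v$ as its only vertex, the claim is trivial; the base case $n=3$ is $G=P_2$, which is handled by the definition of strong unmixedness. For the inductive step with $n>3$, if $G_v$ is complete then both $(G_1)_v$ and $(G_2)_v$ are complete and we are done, so assume $G_v$ is non-complete. By Lemma \ref{Gvunmacc}, $J_{G_v}$ is unmixed (since $J_{(G_1)_v}$ and $J_{(G_2)_v}$ are unmixed, being strongly unmixed). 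To verify strong unmixedness of $J_{G_v}$ we must exhibit a cut vertex $u$ of $G_v$ such that $J_{G_v\setminus\{u\}}$, $J_{(G_v)_u}$ and $J_{(G_v)_u\setminus\{u\}}$ are all strongly unmixed.

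The key point is to locate such a $u$ inside one of the pieces. Since $(G_1)_v$ is strongly unmixed and non-complete (if $(G_1)_v$ is complete, work with $(G_2)_v$ instead; and note both cannot be complete simultaneously, else $G_v$ would be complete as $v$ is free in each — this needs a small check since gluing two complete graphs at a free vertex gives a decomposable graph whose binomial edge ideal is handled via Remark \ref{remglu}, but one should argue that strong unmixedness still follows, perhaps directly from Lemma \ref{freeGsu} applied to the free vertex $v$), there is a cut vertex $u$ of $(G_1)_v$ with $J_{(G_1)_v\setminus\{u\}}$, $J_{((G_1)_v)_u}$, $J_{((G_1)_v)_u\setminus\{u\}}$ all strongly unmixed. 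Since $v$ is free in $(G_1)_v$, $u\neq v$, so $u$ is a cut vertex of $G_v$ as well. Now I would decompose: $G_v\setminus\{u\}$ is the gluing of $(G_1)_v\setminus\{u\}$ and $(G_2)_v$ along $v$ (after checking that $v$ lies in the component of $(G_1)_v\setminus\{u\}$ that gets glued — this holds because $v$ is free, hence not separated from its neighbors), and similarly $(G_v)_u = ((G_1)_u)_v \cup G_2$ glued along $v$, with $(G_v)_u\setminus\{u\} = \big(((G_1)_u\setminus\{u\})\cup G_2\big)_v$, using the identities $((G_1)_u\setminus\{u\})_v = ((G_1)_v)_u\setminus\{u\}$ established in the previous proof. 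Each of these glued graphs has strictly fewer than $n$ vertices in its "$(\cdot)_v$" version, so the induction hypothesis applies: $J_{(G_1)_v\setminus\{u\}}$ strongly unmixed (together with $J_{(G_2)_v}$) gives $J_{G_v\setminus\{u\}}$ strongly unmixed; $J_{((G_1)_v)_u\setminus\{u\}}$ strongly unmixed (with $J_{(G_2)_v}$) gives $J_{(G_v)_u\setminus\{u\}}$ strongly unmixed, whence by Lemma \ref{freeGsu} (applied at the free vertex $v$, using unmixedness of $J_{(G_v)_u}$ from Lemma \ref{Gvunmacc}) $J_{(G_v)_u}$ is strongly unmixed.

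Actually, to keep the induction clean I would phrase the statement being inducted on as exactly the biconditional combining Propositions \ref{Gvvsu} and \ref{Gvsu} (or simply invoke \ref{Gvvsu} for the pieces and \ref{Gvsu} itself for the reassembly in lower rank) — one must be slightly careful that the inductive applications are to graphs of the form "gluing of two graphs at a common free vertex," which is exactly the hypothesis shape, with the $(\cdot)_v$-vertex-count genuinely dropping when $u$ is removed. The main obstacle I anticipate is the bookkeeping around the decompositions: verifying that after removing $u$ the vertex $v$ still sits in the "large" glued component (so that $G_v\setminus\{u\}$ really is a gluing along $v$ of the two asserted pieces rather than having $v$ stranded), and confirming the graph identities $(G_v)_u\setminus\{u\} = \big(((G_1)_u\setminus\{u\})\cup G_2\big)_v$ and $((G_1)_u\setminus\{u\})_v=((G_1)_v)_u\setminus\{u\}$ hold at the level of edge sets — these are the same identities used in Proposition \ref{Gvvsu}, so they transfer, but they need to be invoked carefully. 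A secondary subtlety is the case where $(G_1)_v$ (or $(G_2)_v$) is complete: then no cut vertex is available inside that piece, and one falls back on Lemma \ref{freeGsu} with the free vertex $v$ of $G_v$, using that $J_{G_v\setminus\{v\}}$ decomposes as the disjoint pieces minus $v$ whose strong unmixedness comes from the hypothesis and induction.
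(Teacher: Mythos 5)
Your proposal is correct and follows essentially the same route as the paper's proof: induction on $\vert V(G_v)\vert$, choosing the cut vertex $u$ inside the non-complete piece $(G_1)_v$ furnished by its strong unmixedness, using the decomposition identities $G_v\setminus\{u\}=(H'\cup G_2)_v\sqcup H''$ and $(G_v)_u\setminus\{u\}=\big(((G_1)_v)_u\setminus\{u\}\cup G_2\big)_v$ to apply the induction hypothesis, and finishing with Lemma \ref{freeGsu} at the free vertex $u$ of $(G_v)_u$. The bookkeeping points you flag (that $v$ stays in the glued component, the edge-set identities, and the degenerate case where both pieces are complete, in which $G_v$ is itself complete) are exactly the ones the paper checks, so no essential idea is missing.
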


\begin{proof}
We proceed by induction on $n$, where $n=\vert V(G_{v})\vert$. If one of $G_{1}$ and $G_{2}$ is empty, then the result follows trivially. So $n=3$ is the base case for which $G_{v}$ is complete and hence $J_{G_{v}}$ is strongly unmixed. If $(G_{1})_{v}$ and $(G_{2})_{v}$ are complete, then $G_{v}$ is also complete and so $J_{G_{v}}$ is strongly unmixed. Assume at least one of $(G_{1})_{v}$ and $(G_{2})_{v}$ is non-complete, say $(G_{1})_{v}$. By Lemma \ref{Gvunmacc}, $J_{G_{v}}$ is unmixed. For simplicity of notation we set $H^{i}=(G_{i})_{v}$, where $i=1,2$. Since $H^{1}$ is non-complete and $J_{H^{1}}$ is strongly unmixed, there exists a cut vertex $u$ of $H^{1}$ for which $J_{H^{1}\setminus\{u\}}, J_{H^{1}_{u}}$, and $J_{H^{1}_{u}\setminus\{u\}}$ are strongly unmixed. Let $H^{1}\setminus\{u\}= H^{\prime}\sqcup H^{\prime\prime}$, where $v\in V(H^{\prime})$. Then $G_{v}\setminus\{u\}=H\sqcup H^{\prime\prime}$, where $V(G_{2})\subseteq V(H)$. Note that,
\begin{align*}
H&= (H^{\prime}\cup G_{2})_{v}\,\,\, \text{if}\,\, \mathcal{N}_{G_{1}}(v)\neq\{u\},\\
&= (G_{2})_{v}\,\,\,\,\,\,\,\,\,\,\,\,\,\,\,\, \text{if}\,\, \mathcal{N}_{G_{1}}(v)=\{u\}.
\end{align*}
Now $J_{H^{\prime}}=J_{H^{\prime}_{v}}$ and $J_{(G_{2})_{v}}$ are strongly unmixed. Thus, using induction hypothesis, we get $J_{H}$ is strongly unmixed. $J_{H^{\prime\prime}}$ being also strongly unmixed, $J_{G_{v}\setminus\{u\}}$ is strongly unmixed. Consider $(G_{v})_{u}\setminus\{u\}$ and note that
$$ (G_{v})_{u}\setminus\{u\}= (H^{1}_{u}\setminus\{u\}\cup G_{2})_{v}.$$
Now $(H^{1}_{u}\setminus\{u\})_{v}=H^{1}_{u}\setminus\{u\}$ and hence by induction hypothesis, $J_{(G_{v})_{u}\setminus\{u\}}$ is strongly unmixed. Since $u$ is a free vertex in $(G_{v})_{u}$, by Lemma \ref{freeGsu}, we have $J_{(G_{v})_{u}}$ is strongly unmixed.
\end{proof}

\begin{definition}{\rm
Let $G$ be a connected graph such that $J_{G}$ is unmixed and $B$ be a block of $G$. Let $V=\{v_{1},\ldots,v_{k}\}$ be the set of cut vertices of $G$ belonging to $V(B)$. Then we can write
\begin{align}\label{blockdec}
G=B\cup\big(\bigcup_{i=1}^k G_{i}\big),
\end{align}
where $V(G_{i})\cap V(B)=\{v_{i}\}$ for each $1\leq i\leq k$, and the connected components of $G\setminus V$ are $B\setminus V$ (may be empty), $G_{1}\setminus\{v_{1}\},\ldots, G_{k}\setminus\{v_{k}\}$. 
}
\end{definition}

Considering the decomposition \ref{blockdec}, we define a new graph $\overline{B}^{W}$, where $W=\{v_{s_{1}},\ldots,v_{s_{r}}\}\subseteq V$ such that
\begin{enumerate}
\item[$\bullet$] $V(\overline{B}^{W})=V(B)\cup \{f_{v_{{s_{1}}}},\ldots, f_{v_{{s_{r}}}}\},$

\item[$\bullet$] $E(\overline{B}^W)=E(B)\cup \big(\bigcup_{v_{i}\not\in W} E(G_{i})\big)\cup \{\{v_{s_{i}},f_{v_{{s_{i}}}}\}\mid i=1,\ldots, r\}.$
\end{enumerate}
By $\overline{B}$ \textit{with respect to} $G$, we mean $\overline{B}^V$ and call it the \textit{block with whiskers} of $G$ (Sometimes we write only $\overline{B}$ if the graph is clear from the context or sometimes by $\overline{B}$ we mean a block attaching with some whiskers). In simple words, $\overline{B}$ is the graph attaching whiskers to all the cut vertices $v_{i}$ of $G$ belong to $V(B)$ replacing $G_{i}$'s.
\medskip

\begin{theorem}\label{baracc}
Let $G$ be an accessible graph and $B$ be any block of $G$. Let $V=\{v_{1},\ldots,v_{k}\}$ be the set of cut vertices of $G$ belong to $V(B)$. Then for any $W\subseteq V$, $\overline{B}^W$ is accessible. In particular, $\overline{B}$  is accessible.
\end{theorem}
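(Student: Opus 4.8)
The plan is to reduce the problem to the two-block situation handled by the earlier propositions, and then induct on the number of cut vertices in $V$, peeling off one $G_i$ at a time. Observe first that $\overline{B}^W$ is obtained from $\overline{B}^{W'}$, where $W' = W \setminus \{v_{s_r}\}$, by replacing the whisker at $v_{s_r}$ with the subgraph $G_{s_r}$; equivalently, writing $H = \overline{B}^{W'}$ and decomposing along the cut vertex $v_{s_r}$, we can realize $\overline{B}^{W}$ and $\overline{B}^{W'}$ as glued graphs sharing the free-in-one-piece vertex $v_{s_r}$. So the heart of the matter is a single step: if $G = G_1 \cup G_2$ with $V(G_1) \cap V(G_2) = \{v\}$, and we already know the accessibility behaviour of the pieces, we want to swap $G_2$ for a whisker at $v$ (or vice versa) without destroying accessibility. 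The combinatorial engine for this is Proposition~\ref{cutset} (the explicit description $\mathfrak{C}(G) = \mathcal{A} \cup \mathcal{B} \cup \mathcal{C} \cup \mathcal{D}$) together with Corollary~\ref{unmixed} for the unmixedness bookkeeping.

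Concretely, I would argue by induction on $k = |V|$. For the inductive step, fix $v = v_k$ and write $G = G' \cup G_k$ where $G' = B \cup \bigcup_{i<k} G_i$, with $V(G') \cap V(G_k) = \{v\}$. Since $G$ is accessible, $J_G$ is unmixed and $\mathfrak{C}(G)$ is accessible; one first checks that $v$ is a cut vertex of $G$, so Proposition~\ref{accpropunm} applies to $G \setminus \{v\}$, giving unmixedness of $J_{G'\setminus\{v\}}$ and $J_{G_k \setminus \{v\}}$ and the compatibility $\mathfrak{C}(G \setminus \{v\}) = \{T : T \cup \{v\} \in \mathfrak{C}(G)\}$. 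Next one shows that both $G'$ (with its induced structure) and $G_k$ are themselves accessible: this should follow by restricting cutsets of $G$ via Proposition~\ref{cutset} — a cutset of $G$ not containing $v$ splits as $S_1 \cup S_2$ with $S_i \in \mathfrak{C}$ of the corresponding piece, and one transfers the "$T \setminus \{t\}$ is again a cutset" property across the decomposition, using that the pieces only interact at $v$. Once $G'$ is known to be accessible with $k-1$ cut vertices of $B$ inside it, the induction hypothesis gives accessibility of $\overline{B}^{W \setminus \{v_k\}}$ viewed relative to $G'$. It then remains to put the whisker back at $v_k$ (if $v_k \in W$) or to keep $G_k$ (if $v_k \notin W$, i.e. $\overline{B}^W$ retains $G_k$): in the first case we are gluing a $K_2$-whisker to an accessible graph at a free-in-$K_2$ vertex, and accessibility is preserved because a single whisker adds no new cutsets at that vertex and the vertex never lies in a cutset (by the free-vertex remark after Proposition~\ref{accpropunm}); in the second case $G_k$ is untouched. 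A clean way to package the "put the whisker back" direction is Proposition~\ref{freacc}: the whisker endpoint $f_{v_k}$ is a free vertex, $J_{\overline{B}^W}$ is unmixed (via Corollary~\ref{unmixed}, feeding in the component-count conditions for cutsets of the two glued pieces), and $\overline{B}^W \setminus \{f_{v_k}\} = \overline{B}^{W \setminus \{v_k\}} \cup (\text{the edge-less point})$ — actually $\overline{B}^W$ with $f_{v_k}$ removed is not quite $\overline{B}^{W\setminus\{v_k\}}$, so more care is needed; one instead applies Proposition~\ref{freacc} to the graph $\overline{B}^{W\setminus\{v_k\}}$ with the whisker already attached and checks the removed vertex is free, or argues directly that attaching one whisker to a cut vertex of an accessible graph yields an accessible graph.

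For the unmixedness half of "accessible", I would verify the component-count hypotheses of Corollary~\ref{unmixed} for the decomposition $\overline{B}^W = (\text{part containing } B) \cup (\text{the }G_i\text{'s or whiskers at }v_i \notin W \text{ resp. }\in W)$: a whisker at $v$ contributes cutsets $\emptyset$ (with $c=2$, i.e. $|S|+1$) and nothing containing $v$, which trivially satisfies condition (ii); and each retained $G_i$ inherits the required counts because $G$ itself is unmixed and $v_i$ is a cut vertex, so Proposition~\ref{accpropunm} pins down exactly these counts. Assembling these, Corollary~\ref{unmixed} gives $J_{\overline{B}^W}$ unmixed, and combined with the accessible-set-system argument above, $\overline{B}^W$ is accessible; taking $W = V$ gives the "in particular" statement.

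The main obstacle I anticipate is the cutset-transfer bookkeeping in showing that a non-empty cutset $T$ of $\overline{B}^W$ is accessible: one must trace $T$ through the case split of Proposition~\ref{cutset} (does $T$ contain $v_i$? does it contain all of $\mathcal{N}(v_i)$ on one side?), pull out the corresponding accessible reduction $T \setminus \{t\}$ on the relevant piece using that piece's accessibility, and then re-glue to get a cutset of $\overline{B}^W$ — this is exactly the kind of argument carried out in the proof of Proposition~\ref{freacc}, so it should go through, but it is the delicate, case-heavy part. The clean structural facts from Theorem~\ref{accthm} (every non-empty cutset of a non-complete accessible graph contains a cut vertex, the cut vertices induce a connected subgraph, every vertex is adjacent to a cut vertex) will be needed to rule out degenerate configurations — in particular to ensure that when we remove $G_i$ and attach a whisker at $v_i$, the vertex $v_i$ remains a cut vertex of the new graph and the induced subgraph on cut vertices stays connected.
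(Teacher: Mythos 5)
There is a genuine gap in your reduction. Your inductive step relies on the intermediate claim that, after writing $G=G'\cup G_{k}$ with $V(G')\cap V(G_{k})=\{v\}$, the pieces $G'$ and $G_{k}$ are themselves accessible. This is false in general: accessibility requires unmixedness of the binomial edge ideal, and $J_{G_{k}}$ need not be unmixed. Indeed, if $S\in\mathfrak{C}(G_{k})$ with $v\in S$, then $S\in\mathfrak{C}(G)$ and $c_{G}(S)=\vert S\vert+1$ by unmixedness of $J_{G}$, but one connected component of $G\setminus S$ is all of $G'\setminus\{v\}$, so $c_{G_{k}}(S)=\vert S\vert$ --- exactly the computation carried out in the proof of Theorem \ref{identifythm}. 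This is why the paper never works with the bare pieces of such a decomposition but always with the whiskered pieces $\overline{G_{i}}$ (Lemmas \ref{barunm} and \ref{Gbaracc}). The second unresolved point, which you flag yourself, is the ``put the whisker back'' step: $\overline{B}^{W}$ is obtained from $\overline{B}^{W\setminus\{v_{s_r}\}}$ by \emph{replacing} $G_{s_r}$ with a whisker, not by deleting or adding a free vertex, so Proposition \ref{freacc} does not apply, and what you would need is precisely the statement being proved, in the one-cut-vertex case.

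The paper's proof avoids the induction entirely. Given $T\in\mathfrak{C}(\overline{B}^{W})$, it matches each connected component of $\overline{B}^{W}\setminus T$ with a connected component of $G\setminus T$ by substituting $G_{j}\setminus\{v_{j}\}$ back for each whisker tip $f_{v_{j}}$; this shows $T\in\mathfrak{C}(G)$ and $c_{\overline{B}^{W}}(T)=c_{G}(T)$, so $J_{\overline{B}^{W}}$ is unmixed because $J_{G}$ is. Conversely, any cutset of $G$ contained in $V(\overline{B}^{W})$ and avoiding the whisker tips is a cutset of $\overline{B}^{W}$, and every cutset of $\overline{B}^{W}$ automatically avoids the whisker tips since they are free vertices; hence the accessible reduction $T\setminus\{t\}\in\mathfrak{C}(G)$ supplied by accessibility of $G$ is again a cutset of $\overline{B}^{W}$. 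If you want to salvage your inductive scheme, you must phrase it in terms of whiskered pieces throughout and prove the one-step replacement lemma (accessibility is preserved when one glued piece is swapped for a whisker) by this same direct cutset comparison --- at which point the induction buys you nothing over the direct argument.
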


\begin{proof}
Let $T\in \mathfrak{C}(\overline{B}^W)$ and without loss of generality $W=\{v_{1},\ldots,v_{r}\}$, where $r\leq k$. Assume the decomposition of $G$ with respect to $B$ is
$$G=B\cup\big(\bigcup_{i=1}^k G_{i}\big),$$
where $V(G_{i})\cap V(B)=\{v_{i}\}$ for each $i\in [k]$, and the connected components of $G\setminus V$ are $B\setminus V$ (may be empty), $G_{1}\setminus\{v_{1}\},\ldots, G_{k}\setminus\{v_{k}\}$. Then the connected components of $\overline{B}^W\setminus V$ are $B\setminus V$ (may be empty), $\{f_{v_{1}}\},\ldots,\{f_{v_{r}}\}, G_{r+1}\setminus\{v_{r+1}\},\ldots, G_{k}\setminus\{v_{k}\}$. Let $A_{1},\ldots, A_{s}$ be the connected components of $\overline{B}^W\setminus T$. Now for each $A_{i}$ we consider an induced subgraph $A^{\prime}_{i}$ of $G$ as follows
\begin{align*}
 \bullet\,\,\,\,\, V(A^{\prime}_{i})&=(V(A_{i})\setminus F) \cup \bigg(\bigcup_{f_{v_{j}}\in V(A)} (V(G_{j})\setminus\{v_{j}\})\bigg);\\
 \bullet\,\,\,\,\, E(A^{\prime}_{i})&=E(A_{i}\setminus F)\cup\bigg(\bigcup_{\{v_{j},f_{v_{j}}\}\in E(A)} E(G_{j})\bigg)\,\,\,\,\text{if}\,\,\, V(A)\not\subseteq F,\\
 &=E(G_{j}\setminus\{v_{j}\})\,\,\,\, \text{if}\,\,\, V(A_{i})=\{f_{v_{j}}\};
 \end{align*}
where $F=\{f_{v_{1}},\ldots,f_{v_{r}}\}$. Then it is easy to see that $A^{\prime}_{1},\ldots,A^{\prime}_{s}$ are the only connected components of $G\setminus T$. Now $T\in \mathfrak{C}(\overline{B}^W)$ implies $T\in \mathfrak{C}(G)$ and also, we have shown that $c_{\overline{B}^W}(T)=c_{G}(T)$. Therefore, $J_{\overline{B}^W}$ is unmixed as $J_{G}$ is unmixed. Note that if $T\in \mathfrak{C}(G)$ and $T\subseteq V(\overline{B}^W)\setminus F$, then $T\in \mathfrak{C}(\overline{B}^W)$ also. Since each $f_{v_{i}}$ for $i=1,\ldots,r$ is a free vertex of $\overline{B}^W$, $T\in \mathfrak{C}(\overline{B}^W)$ gives $T\subseteq V(\overline{B}^W)\setminus F$. Hence $\overline{B}^W$ is accessible as $G$ is so.
\end{proof}

\begin{theorem}\label{blocksu}
Let $G$ be a graph such that $J_{G}$ is strongly unmixed. Let $B$ be a block of $G$ and $V$ be the set of cut vertices of $G$ belonging to $V(B)$. Then for any $W\subseteq V$, $J_{\overline{B}^W}$ is strongly unmixed. In particular, $J_{\overline{B}}$ is strongly unmixed.
\end{theorem}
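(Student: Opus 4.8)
The plan is to mimic the structure of the proof of Theorem \ref{baracc}, but now tracking the strongly unmixed property instead of accessibility, and to run an induction on the number of cut vertices of $G$ lying in $V(B)$ (equivalently, on the number of nontrivial pieces $G_i$ that get replaced by whiskers). The base case is when $G = \overline{B}^W$ already (no extra structure to strip), which is trivially true. For the inductive step, pick a cut vertex $v = v_j \in V \setminus W$ (if $V = W$ we are again in a degenerate situation handled directly), so that $\overline{B}^W$ is obtained from a graph $G' = \overline{B}^{W \cup \{v_j\}}$-type object together with the piece $G_j$ glued along $v_j$; the point is that passing from $\overline{B}^{W}$ to $\overline{B}^{W\cup\{v_j\}}$ replaces the whole subgraph $G_j$ hanging off $v_j$ by a single whisker $\{v_j, f_{v_j}\}$, and $v_j$ is a cut vertex of $G$.

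First I would record that $J_{\overline{B}^W}$ is unmixed: this follows exactly as in the first part of the proof of Theorem \ref{baracc} (the bijection $A_i \leftrightarrow A_i'$ between connected components of $\overline{B}^W \setminus T$ and $G \setminus T$ shows $c_{\overline{B}^W}(T) = c_G(T)$ and $\mathfrak{C}(\overline{B}^W) \subseteq \mathfrak{C}(G)$, and $J_G$ strongly unmixed implies $J_G$ unmixed). Next, since $J_G$ is strongly unmixed and non-complete (the non-complete case; if $G$ is complete there are no cut vertices and $\overline{B}^W = G$), there is a cut vertex $w$ of $G$ with $J_{G \setminus \{w\}}$, $J_{G_w}$, $J_{G_w \setminus \{w\}}$ all strongly unmixed. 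The natural strategy is to choose $w$ to be one of the cut vertices in $V(B)$ — but $w$ comes from the hypothesis and need not lie in $V(B)$, so I would instead argue more structurally: using Remark \ref{remglu} and Proposition \ref{Gvvsu}, strong unmixedness passes to the decomposable pieces of $G$ at a free vertex. Concretely, the $G_i$'s and the graph $\overline{B}^{V}$ itself appear (after the $G_v$-operations) as subgraphs glued at free vertices, so I would decompose $G$ along its cut vertices and apply Propositions \ref{Gvvsu} and \ref{Gvsu} together with Lemma \ref{Gvunmacc} to reduce the global strong unmixedness of $G$ to that of $\overline{B}^W$.

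More precisely, the cleanest route is induction on $|V \setminus W|$. If $V \setminus W = \emptyset$, then $\overline{B}^W = \overline{B}$, and one shows directly from Definition \ref{defsu} by peeling off the whiskers: each $f_{v_i}$ is a free vertex, so $\overline{B}$ strongly unmixed $\Leftrightarrow$ $\overline{B}^{\emptyset} = B$-with-appropriate-whiskers is, via Lemma \ref{freeGsu} — actually here I'd want to go the other direction and will need that $G$ strongly unmixed forces, block by block, the blocks-with-whiskers to be strongly unmixed, which is the genuinely new content. If $|V\setminus W| > 0$, pick $v_j \notin W$; write $G = G^{(j)} \cup G_j$ with $V(G^{(j)}) \cap V(G_j) = \{v_j\}$ where $G^{(j)}$ is $G$ with the branch $G_j$ removed. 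Here $v_j$ is a cut vertex, not a free vertex, so Remark \ref{remglu} does not apply directly; instead I use the cut-vertex clause of Definition \ref{defsu} for $v_j$ if it happens to be the distinguished cut vertex, or otherwise transport along the given cut vertex $w$. This is exactly the point where I expect the main obstacle: relating the distinguished cut vertex $w$ guaranteed by strong unmixedness of $G$ to the cut vertices $v_i \in V(B)$ along which we want to peel, and showing that the operations $G \mapsto G\setminus\{w\}$, $G \mapsto G_w$ interact well with the block decomposition \ref{blockdec} so that the induction hypothesis applies to the smaller graphs $G\setminus\{w\}$, $G_w$, $G_w\setminus\{w\}$ — whose blocks-with-whiskers are related to $\overline{B}^{W}$ by a bounded modification (removing a vertex, or completing a neighborhood). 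Once that bookkeeping is in place, strong unmixedness of $J_{\overline{B}^W}$ follows by verifying the recursive condition of Definition \ref{defsu} with the induced cut vertex, using Lemma \ref{freeGsu} to reinstate whiskers as free vertices and Propositions \ref{Gvvsu}, \ref{Gvsu} and Lemma \ref{Gvunmacc} to handle the $(\cdot)_v$ operations across the gluing. The "In particular" statement is the case $W = V$.
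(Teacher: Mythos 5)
Your proposal correctly identifies the right ingredients (Theorem \ref{baracc} for unmixedness of $J_{\overline{B}^W}$, Lemma \ref{freeGsu}, Propositions \ref{Gvvsu} and \ref{Gvsu}), but it is a plan rather than a proof, and the plan as stated has a genuine gap at its core. You propose induction on $|V\setminus W|$ with base case $V\setminus W=\emptyset$, i.e.\ $W=V$ and $\overline{B}^W=\overline{B}$ --- but that base case \emph{is} the theorem (it is exactly the ``in particular'' conclusion), and you explicitly concede that you would ``need that $G$ strongly unmixed forces, block by block, the blocks-with-whiskers to be strongly unmixed, which is the genuinely new content.'' Note that the trivial end of your induction is the other one: $\overline{B}^{\emptyset}=G$. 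An induction stepping from $W$ to $W\cup\{v_j\}$ (replacing one branch $G_j$ by a whisker) is not absurd, but the inductive step --- that replacing a branch hanging at a cut vertex by a single whisker preserves strong unmixedness --- is itself essentially the full difficulty of the theorem and is nowhere established in your write-up. Likewise, you name the ``main obstacle'' (relating the distinguished cut vertex $w$ furnished by Definition \ref{defsu} to the cut vertices in $V(B)$) and then assume ``once that bookkeeping is in place'' the result follows; that bookkeeping is the proof.

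The paper resolves exactly this obstacle by inducting on $|V(G)|$ instead. With that induction the location of $w$ is harmless: if $w$ lies in a branch $G_i$ whose cut vertex $v_i$ belongs to $W$, then $\overline{B}^W$ with respect to $G$ coincides with $\overline{B}^W$ with respect to the smaller strongly unmixed graph $G\setminus\{w\}$ (resp.\ $G_w\setminus\{w\}$), and the induction hypothesis applies verbatim; if $w$ survives into $\overline{B}^W$ (either in a retained branch or in $V(B)$ itself), one verifies the recursive clause of Definition \ref{defsu} at $w$ for $\overline{B}^W$, getting $J_{\overline{B}^W\setminus\{w\}}$ and $J_{(\overline{B}^W)_w\setminus\{w\}}$ strongly unmixed from the induction hypothesis applied to $G\setminus\{w\}$ and $G_w\setminus\{w\}$, then recovering $J_{(\overline{B}^W)_w}$ via Lemma \ref{freeGsu}; the case $w\in V(B)$ additionally uses Proposition \ref{Gvvsu} to split $(G)_w$ across the gluing. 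If you replace your induction on $|V\setminus W|$ by induction on the number of vertices and carry out this case analysis, your outline becomes the paper's proof; as written, the central step is missing.
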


\begin{proof}
We will go ahead by induction on the number of vertices $n$ of $G$. By Theorem \ref{baracc}, $J_{\overline{B}^{W}}$ is unmixed. If $G$ is complete then there is nothing to proof, otherwise there exists a cut vertex $v$ of $G$ such that $J_{G\setminus\{v\}}, J_{G_{v}}$ and $J_{G_{v}\setminus\{v\}}$ are strongly unmixed. Let $V=\{v_{1},\ldots, v_{k}\}$ and \ref{blockdec} is the decomposition of $G$ with respect to $B$.\par 

\noindent \textbf{Case-I:} Suppose $v\not\in V(B)$. Without loss of generality assume $W=\{v_{1},\ldots, v_{r}\}$, where $r\leq k$. If $v\in V(G_{i})$ for $i\in\{1,\ldots,r\}$, then by induction hypothesis $J_{\overline{B}^W}$ is strongly unmixed. Now, assume $v\not\in V(G_{i})$ for $i=1,\ldots,r$. Then $v$ is a cut vertex in $G_{\overline{B}^W}$. Therefore, $J_{G\setminus\{v\}}$ is strongly unmixed implies that $J_{\overline{B}^{W}\setminus\{v\}}$ is strongly unmixed by induction. Again by induction hypothesis, $J_{G_{v}\setminus\{v\}}$ is strongly unmixed imply $J_{(\overline{B}^W)_{v}\setminus\{v\}}$ is strongly unmixed and by Lemma \ref{freeGsu}, $J_{(\overline{B}^W)_{v}}$ is also strongly unmixed. Hence $J_{\overline{B}^W}$ is strongly unmixed.
\par 

\noindent\textbf{Case-II:} Let $v=v_{i}\in V(B)$. Let $\overline{B}^{W}\setminus\{v\}= H^{\prime}\sqcup H^{\prime\prime}$, where $V(B)\setminus\{v\}\subseteq H^{\prime\prime}$. Then $H^{\prime}= G_{i}\setminus\{v\}$ or $H^{\prime}=\{f_{v}\}$. Since, $J_{G\setminus\{v\}}$ is strongly unmixed, $J_{H^{\prime}}$ is strongly unmixed and by induction hypothesis $J_{H^{\prime\prime}}$ is strongly unmixed. Therefore $J_{\overline{B}^{W}\setminus\{v\}}$ is strongly unmixed. If $H^{\prime}= G_{i}\setminus\{v\}$, then by induction we have $J_{(\overline{B}^{W})_{v}\setminus\{v\}}$ is strongly unmixed and thus, by Lemma \ref{freeGsu}, $J_{(\overline{B}^{W})_{v}}$ is strongly unmixed. Set $\overline{B}^{W}\setminus (V(G_{i})\setminus\{v\})=H$. Then $H\setminus\{v\}=H^{\prime\prime}$. In this case observe that,
$$(G)_{v}=(H\cup G_{i})_{v}.$$
Then by Proposition \ref{Gvvsu}, $J_{H_{v}}$ is strongly unmixed. Now suppose $H^{\prime}=\{f_{v}\}$. Then observe that $\overline{B}^{W}\setminus\{v\}=H^{\prime\prime}\sqcup \{f_{v}\}$ and $
(\overline{B}^{W})_{v}\setminus\{v\}\simeq H_{v}$ and so $J_{(\overline{B}^{W})_{v}\setminus\{v\}}$ is strongly unmixed. By Lemma \ref{freeGsu}, $J_{(\overline{B}^{W})_{v}}$ is strongly unmixed and hence from definition, $J_{\overline{B}^W}$ is strongly unmixed.
\end{proof}

\begin{lemma}\label{freunm}
Let $G$ be a graph and $v\in V(G)$ is a free vertex of $G$. Then the following hold.
\begin{enumerate}[(i)]
\item If $\mathcal{N}_{G}(v)\not\in \mathfrak{C}(G)$, then $J_{G}$ is unmixed if and only if $J_{G\setminus \{v\}}$ is unmixed.
\item If for all $T\in \mathfrak{C}(G)$ with $\mathcal{N}_{G}(v)\subseteq T$ we have $c_{G}(T)=\vert T\vert+1$ and $J_{G\setminus\{v\}}$ is unmixed, then $J_{G}$ is unmixed.
\end{enumerate}
\end{lemma}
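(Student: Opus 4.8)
The plan is to analyze the primary decomposition $J_G=\bigcap_{T\in\mathfrak{C}(G)}P_T(G)$ together with the height formula $\operatorname{height}P_T(G)=n+|T|-c(T)$, so that unmixedness of $J_G$ is equivalent to the single numerical condition $c_G(T)=|T|+1$ for every $T\in\mathfrak{C}(G)$ (assuming $G$ connected, which we may by the remark following Proposition \ref{accpropunm}). The key structural input is the relationship between $\mathfrak{C}(G)$ and $\mathfrak{C}(G\setminus\{v\})$ when $v$ is a free vertex. Since $v$ is free, the observation recalled from \cite[Proposition 2.1]{raufrin} gives $v\notin T$ for every $T\in\mathfrak{C}(G)$, and from the proof of \cite[Lemma 2.2]{raufrin} (used already in Proposition \ref{freacc}) one has $\mathfrak{C}(G\setminus\{v\})\subseteq\mathfrak{C}(G)$, with the finer description: for $T\subseteq V(G\setminus\{v\})$, $T\in\mathfrak{C}(G)$ with $\mathcal{N}_G(v)\not\subseteq T$ iff $T\in\mathfrak{C}(G\setminus\{v\})$, and in that case $c_G(T)=c_{G\setminus\{v\}}(T)$ because deleting $T$ leaves $v$ attached (via an uncut neighbor) to one existing component, so no component count changes. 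This correspondence is the engine of both parts.

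For part (i): if $\mathcal{N}_G(v)\notin\mathfrak{C}(G)$, then no cutset $T$ of $G$ can contain $\mathcal{N}_G(v)$ — indeed if $\mathcal{N}_G(v)\subseteq T\in\mathfrak{C}(G)$ then, since $v$ is free and hence $G[\mathcal{N}_G(v)\cup\{v\}]$ is a clique, one checks $\mathcal{N}_G(v)$ itself would have the cut-point property, contradicting the hypothesis (alternatively: $v$ becomes isolated in $G\setminus T$, forcing $\mathcal{N}_G(v)$ to already be a cutset). Hence every $T\in\mathfrak{C}(G)$ has $\mathcal{N}_G(v)\not\subseteq T$, so by the correspondence above $\mathfrak{C}(G)=\mathfrak{C}(G\setminus\{v\})$ and the counts agree, giving $J_G$ unmixed $\iff$ $J_{G\setminus\{v\}}$ unmixed directly from the numerical criterion. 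For part (ii): given an arbitrary $T\in\mathfrak{C}(G)$, split into the two cases. If $\mathcal{N}_G(v)\subseteq T$, the hypothesis directly gives $c_G(T)=|T|+1$. If $\mathcal{N}_G(v)\not\subseteq T$, then $T\in\mathfrak{C}(G\setminus\{v\})$ and $c_G(T)=c_{G\setminus\{v\}}(T)=|T|+1$ by unmixedness of $J_{G\setminus\{v\}}$. Since every $T\in\mathfrak{C}(G)$ satisfies $c_G(T)=|T|+1$, $J_G$ is unmixed.

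The only delicate point — and the step I would write out most carefully — is justifying the equivalence "$T\in\mathfrak{C}(G)$ and $\mathcal{N}_G(v)\not\subseteq T$ $\iff$ $T\in\mathfrak{C}(G\setminus\{v\})$, with $c_G(T)=c_{G\setminus\{v\}}(T)$." One direction is immediate from $\mathfrak{C}(G\setminus\{v\})\subseteq\mathfrak{C}(G)$ plus the fact that $v$ free means $v$ cannot be isolated after deleting a cutset not containing all of $\mathcal{N}_G(v)$. The other direction needs the remark that for $t\in T$, being a cut vertex of $G\setminus(T\setminus\{t\})$ is equivalent to being a cut vertex of $(G\setminus\{v\})\setminus(T\setminus\{t\})$ once some neighbor of $v$ survives: attaching the free vertex $v$ back to a vertex of a single existing component neither creates nor destroys cut vertices nor changes the number of components (since $v$ together with its surviving neighborhood lies inside one clique, hence one component). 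Everything else is bookkeeping with the height formula, so the main obstacle is purely this structural lemma about free vertices, which is essentially already implicit in \cite{raufrin} and in the argument of Proposition \ref{freacc}.
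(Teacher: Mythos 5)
Your proposal is correct and follows essentially the same route as the paper: the same reduction to the numerical criterion $c_{G}(T)=\vert T\vert+1$, the same observation that $\mathcal{N}_{G}(v)\subseteq T\in\mathfrak{C}(G)$ forces $\mathcal{N}_{G}(v)\in\mathfrak{C}(G)$ for part (i), and the same use of the cutset correspondence from \cite[Lemma 2.2]{raufrin} together with $c_{G}(T)=c_{G\setminus\{v\}}(T)$ when some neighbor of $v$ survives. One harmless imprecision: the backward direction of your stated equivalence should not carry the conjunct $\mathcal{N}_{G}(v)\not\subseteq T$ (a cutset of $G\setminus\{v\}$ may well contain $\mathcal{N}_{G}(v)$, e.g.\ a single cut vertex to which the whisker $v$ is attached), but you only ever invoke the inclusion $\mathfrak{C}(G\setminus\{v\})\subseteq\mathfrak{C}(G)$ and the forward direction, so the argument is unaffected.
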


\begin{proof}
(i): Suppose $\mathcal{N}_{G}(v)\subseteq T$ for some $T\in \mathfrak{C}(G)$. Then each $s\in \mathcal{N}_{G}(v)$ is a cut point of $G\setminus (T\setminus\{s\})$. So, there exists $w\in \mathcal{N}_{G}(s)\setminus \{v\}$ such that $w\not\in T$. Therefore $s$ is a cut point of $G\setminus (\mathcal{N}_{G}(v)\setminus\{s\})$ for every $s\in \mathcal{N}_{G}(v)$. Hence, $\mathcal{N}_{G}(v)\in \mathfrak{C}(G)$. So, $\mathcal{N}_{G}(v)\not\in \mathfrak{C}(G)$ implies $\mathcal{N}_{G}(v)\not\subseteq T$ for all $T\in \mathfrak{C}(G)$. Then by \cite[Lemma 2.2]{raufrin}, we have $T\in \mathfrak{C}(G)$ if and only if $T\in \mathfrak{C}(G\setminus \{v\})$. Also it is easy to verify that $c_{G}(T)=c_{G\setminus\{v\}}(T)$ for all $T\in \mathfrak{C}(G)=\mathfrak{C}(G\setminus\{v\})$. Hence $J_{G}$ is unmixed if and only if $J_{G\setminus \{v\}}$ is unmixed.\par 
(ii): Let $T\in \mathfrak{C}(G)$ with $\mathcal{N}_{G}(v)\not\subseteq T$. Then by \cite[Lemma 2.2]{raufrin}, $T\in \mathfrak{C}(G\setminus\{v\})$ and notice that a connected component of $(G\setminus\{v\})\setminus T$ contains a vertex $w\in \mathcal{N}_{G}(v)$. Thus, we have $c_{G}(T)=c_{G\setminus\{v\}}(T)$. Since $J_{G\setminus\{v\}}$ is unmixed, by the given hypothesis, for all $T\in \mathfrak{C}(G)$ we get $c_{G}(T)=\vert T\vert +1$ and hence, $J_{G}$ is unmixed.
\end{proof}

\begin{proposition}\label{freesu}
Let $G$ be a graph and $v\in V(G)$ be a free vertex of $G$. If $J_{G}$ is strongly unmixed and there exists no cutset $T\in \mathfrak{C}(G)$ such that $\mathcal{N}_{G}(v)\subseteq T$, then $J_{G\setminus\{v\}}$ is strongly unmixed.
\end{proposition}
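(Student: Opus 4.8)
The plan is to argue by induction on $n=\vert V(G)\vert$. First I would rephrase the hypothesis: the assumption that no $T\in\mathfrak{C}(G)$ contains $\mathcal{N}_G(v)$ is equivalent to $\mathcal{N}_G(v)\notin\mathfrak{C}(G)$, which is exactly what the argument in the proof of Lemma~\ref{freunm}(i) establishes, and by Lemma~\ref{freunm}(i) this already yields that $J_{G\setminus\{v\}}$ is unmixed. Since a free vertex is never a cut vertex, $G\setminus\{v\}$ is connected whenever $G$ is, and both strong unmixedness and the standing hypothesis behave well under passing to connected components, so I may assume $G$ is connected. If $G$ is complete, then $G\setminus\{v\}$ is complete and there is nothing to prove; so assume $G$ is non-complete and fix, from the definition of strong unmixedness, a cut vertex $w$ of $G$ with $J_{G\setminus\{w\}}$, $J_{G_w}$ and $J_{G_w\setminus\{w\}}$ strongly unmixed. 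Note $w\ne v$.

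The tool I would isolate at the outset is the following criterion: if $H$ is a connected graph with at least two vertices and $u$ is a free vertex of $H$, then $\mathcal{N}_H(u)\in\mathfrak{C}(H)$ if and only if $H$ is non-complete and every $s\in\mathcal{N}_H(u)$ has a neighbour in $V(H)\setminus(\mathcal{N}_H(u)\cup\{u\})$; this follows by inspecting $H\setminus(\mathcal{N}_H(u)\setminus\{s\})$, in which $u$ is a pendant vertex attached to $s$, so that $s$ is a cut vertex there precisely when it has a neighbour outside $\mathcal{N}_H(u)\cup\{u\}$ (the complete case, and the case $V(H)=\mathcal{N}_H(u)\cup\{u\}$, being immediate). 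I would also record the routine facts that $v$ remains a free vertex of $G\setminus\{w\}$, of $G_w$ and of $G_w\setminus\{w\}$ (subclique arguments; for $G_w$ one uses that, when $w\in\mathcal{N}_G(v)$, freeness of $v$ forces $\mathcal{N}_G(v)\setminus\{w\}\subseteq\mathcal{N}_G(w)$), together with the identities $(G\setminus\{v\})\setminus\{w\}=(G\setminus\{w\})\setminus\{v\}$, $(G\setminus\{v\})_w=G_w\setminus\{v\}$ and $(G\setminus\{v\})_w\setminus\{w\}=(G_w\setminus\{w\})\setminus\{v\}$.

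Next I would transfer the hypothesis. By the criterion, $\mathcal{N}_G(v)\notin\mathfrak{C}(G)$ supplies an $s\in\mathcal{N}_G(v)$ with $\mathcal{N}_G(s)\subseteq\mathcal{N}_G(v)\cup\{v\}$; since $\mathcal{N}_G(s)\cup\{s\}\subseteq\mathcal{N}_G(v)\cup\{v\}$ is a clique, $s$ is free in $G$, hence $s\ne w$, so $s$ lies in the component $C_v$ of $G\setminus\{w\}$ containing $v$. A short computation of neighbourhoods then gives $\mathcal{N}_H(s)\subseteq\mathcal{N}_H(v)\cup\{v\}$ for $H\in\{C_v,\,G_w,\,G_w\setminus\{w\}\}$, so applying the criterion in the reverse direction yields $\mathcal{N}_H(v)\notin\mathfrak{C}(H)$ for each of these; equivalently, the standing hypothesis of the proposition is inherited by $G\setminus\{w\}$, $G_w$ and $G_w\setminus\{w\}$. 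Moreover $\mathcal{N}_G(v)\ne\{w\}$ — otherwise $\{w\}\in\mathfrak{C}(G)$ would contain $\mathcal{N}_G(v)$ — so $\vert V(C_v)\vert\ge 2$, and since $v$ is free in $C_v$ the graph $G\setminus\{v,w\}$ has at least two components; hence $w$ is a cut vertex of $G\setminus\{v\}$. Now the induction hypothesis applies to $G\setminus\{w\}$ and to $G_w\setminus\{w\}$ (both have fewer than $n$ vertices, carry the free vertex $v$, are strongly unmixed, and satisfy the inherited hypothesis), giving that $J_{(G\setminus\{v\})\setminus\{w\}}$ and $J_{(G\setminus\{v\})_w\setminus\{w\}}$ are strongly unmixed. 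For $J_{(G\setminus\{v\})_w}=J_{G_w\setminus\{v\}}$ the vertex count is unchanged, so instead I would observe that $w$ is a free vertex of $(G\setminus\{v\})_w$, that this ideal is unmixed (apply Lemma~\ref{freunm}(i) to the free vertex $v$ of $G_w$, using $\mathcal{N}_{G_w}(v)\notin\mathfrak{C}(G_w)$ and that $J_{G_w}$ is unmixed), and that $J_{(G\setminus\{v\})_w\setminus\{w\}}$ is strongly unmixed, whence Lemma~\ref{freeGsu} gives that $J_{(G\setminus\{v\})_w}$ is strongly unmixed. Combining with the unmixedness of $J_{G\setminus\{v\}}$, the definition of strong unmixedness (with cut vertex $w$) shows $J_{G\setminus\{v\}}$ is strongly unmixed.

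The step I expect to be the main obstacle is the transfer of the condition ``no cutset contains $\mathcal{N}(v)$'' to $G_w$ (and $G_w\setminus\{w\}$): because $G_w$ has extra edges, which may even join the two sides of the cut determined by $w$, the cheap argument ``a cutset of $G_w$ is a cutset of $G$'' is false, so I route this through the neighbourhood criterion above, which reduces the transfer to a purely local check near $v$. A secondary, essentially bookkeeping, point is that $G_w$ has the same number of vertices as $G$, so the induction must be closed on $G_w$ by deleting its free vertex $w$ and invoking Lemma~\ref{freeGsu}, rather than by recursing into $G_w$ directly.
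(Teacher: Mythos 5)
Your proof is correct, and its skeleton coincides with the paper's: induct on $\lvert V(G)\rvert$, get unmixedness of $J_{G\setminus\{v\}}$ from Lemma~\ref{freunm}, pick the cut vertex $w$ supplied by the definition of strong unmixedness, transfer the hypothesis to $G\setminus\{w\}$ and $G_w\setminus\{w\}$, apply the induction hypothesis there, and close the $(G\setminus\{v\})_w$ case with Lemma~\ref{freeGsu} applied to the free vertex $w$. Where you genuinely diverge is in the hypothesis-transfer step, which is the heart of the argument. The paper handles it globally: it uses Proposition~\ref{accpropunm} to lift a cutset $T$ of $G\setminus\{w\}$ to $T\cup\{w\}\in\mathfrak{C}(G)$, and it imports the descriptions $\mathfrak{C}(G_w)=\{T\in\mathfrak{C}(G)\mid w\notin T\}$ and $\mathfrak{C}(G_w\setminus\{w\})=\mathfrak{C}(G_w)\setminus\{T\mid \mathcal{N}_G(w)\subseteq T\}$ from the Bolognini--Macchia--Strazzanti paper to deal with $G_w\setminus\{w\}$. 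You instead replace the condition $\mathcal{N}(v)\notin\mathfrak{C}(\cdot)$ by the local witness ``there is $s\in\mathcal{N}(v)$ with $\mathcal{N}(s)\subseteq\mathcal{N}(v)\cup\{v\}$'' (your pendant-vertex criterion, which is correct), note that such an $s$ is itself free and hence distinct from $w$, and check by a direct neighbourhood computation that the witness survives in $G\setminus\{w\}$, $G_w$ and $G_w\setminus\{w\}$. This buys you independence from the cited cutset lemmas and sidesteps the slightly delicate passage between $\mathfrak{C}(G_w\setminus\{w\})$ and $\mathfrak{C}(G\setminus\{w\})$ that the paper declares ``clear.'' You also make explicit two points the paper leaves implicit but which are needed to invoke the definition of strong unmixedness for $G\setminus\{v\}$: that $w$ is in fact a cut vertex of $G\setminus\{v\}$ (via $\mathcal{N}_G(v)\neq\{w\}$), and that $J_{(G\setminus\{v\})_w}$ is unmixed before applying Lemma~\ref{freeGsu}. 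Both versions are sound; yours is more self-contained, the paper's is shorter because it leans on the cited machinery.
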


\begin{proof}
Let $G\setminus\{v\}=H$. Since $\mathcal{N}_{G}(v)\not\in \mathfrak{C}(G)$ and $v$ is a free vertex of $G$, by Lemma \ref{freunm}, $J_{H}$ is unmixed. We proceed by induction on the number of vertices $n$ of $G$. For $n=1,2$, the result holds trivially. If $G$ is complete, then $H$ is also complete and we are done. Suppose $G$ is not complete. Then there exists a cut vertex $u\in V(G)$ of $G$ for which $J_{G\setminus\{u\}}, J_{G_{u}}$, and $J_{G_{u}\setminus\{u\}}$ are strongly unmixed. Since $v$ is a free vertex in $G$, $v\in V(G\setminus\{u\})$ is a free vertex of $G\setminus\{u\}$. Note that $(G\setminus\{u\})\setminus\{v\}=H\setminus\{u\}$. If there exists $T\in \mathfrak{C}(G\setminus\{u\})$ such that $\mathcal{N}_{G\setminus\{u\}}(v)\subseteq T$, then by Proposition \ref{accpropunm}, $T\cup \{u\}\in \mathfrak{C}(G)$ which leads to a contradiction as $\mathcal{N}_{G}(v)\subseteq T\cup\{u\}$. Thus, $G\setminus\{u\}$ satisfies the given conditions and has less than $n$ vertices. Therefore, by induction hypothesis, $J_{H\setminus\{u\}}$ is strongly unmixed. Now, from (\cite{acc}, Lemma 4.5 and  Lemma 5.5), we have
\begin{align*}
\mathfrak{C}(G_{u}\setminus\{u\})&= \mathfrak{C}(G_{u})\setminus\{T\in \mathfrak{C}(G_{u})\mid \mathcal{N}_{G}(u)\subseteq T\}\\
\mathfrak{C}(G_{u})&=\{T\in \mathfrak{C}(G)\mid u\not\in T\}.
\end{align*}
Suppose there exists $T\in \mathfrak{C}(G_{u}\setminus\{u\})$ such that $\mathcal{N}_{G_{u}\setminus\{u\}}(v)\subseteq T$. Then $T\in \mathfrak{C}(G\setminus \{u\})$ is clear and by Proposition \ref{accpropunm}, $T\cup\{u\}\in \mathfrak{C}(G)$. Note that $\mathcal{N}_{G}(v)\setminus\{u\}\subseteq \mathcal{N}_{G_{u}\setminus\{u\}}(v)\subseteq T$ which imply $\mathcal{N}_{G}(v)\subseteq T\cup\{u\}\in \mathfrak{C}(G)$, a contradiction. Therefore, for all $T\in \mathfrak{C}(G_{u}\setminus\{u\})$ we have $\mathcal{N}_{G_{u}\setminus\{u\}}(v)\not\subseteq T$. Since $(G_{u}\setminus\{u\})\setminus\{v\}=H_{u}\setminus\{u\}$, by induction hypothesis $J_{H_{u}\setminus\{u\}}$ is strongly unmixed and by Lemma \ref{freeGsu}, $J_{H_{u}}$ is strongly unmixed.
\end{proof}

\begin{lemma}\label{barunm}
Let $G=G_{1}\cup G_{2}$ be such that $V(G_{1})\cap V(G_{2})=\{v\}$. Consider the graph $\overline{G_{i}}$ by attaching a whisker $\{v,f_{v}\}$ to $G_{i}$ at $v$. If $J_{G}$ is unmixed, then $J_{\overline{G_{i}}}$ is unmixed for $i=1,2$.
\end{lemma}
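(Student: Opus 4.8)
The strategy I would follow is to compute $\mathfrak{C}(\overline{G_i})$ explicitly from Proposition \ref{cutset} and then to verify the unmixedness identity $c_{\overline{G_i}}(T)=|T|+1$ by hand for every $T\in\mathfrak{C}(\overline{G_i})$, transferring each computation to a matching cutset of $G$ where the hypothesis can be applied. As usual we may assume $G$ is connected, hence $G_1$ and $G_2$ are connected; and we assume, as is implicit in this kind of decomposition (cf.\ the reduction in the proof of Proposition \ref{Gvvsu}), that $G_1$ and $G_2$ each contain an edge. Fix $i=1$, the case $i=2$ being symmetric, and regard $\overline{G_1}$ as the union of $G_1$ with the single edge $\{v,f_v\}$, glued along $\{v\}$, so that Proposition \ref{cutset} applies.

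First I would record a consequence of the hypothesis: since $G_1\neq\{v\}\neq G_2$, the graph $G\setminus\{v\}=(G_1\setminus\{v\})\sqcup(G_2\setminus\{v\})$ is disconnected, so $v$ is a cut vertex of $G$, whence $\{v\}\in\mathfrak{C}(G)$ and unmixedness forces $c_G(\{v\})=2$. As $c_G(\{v\})$ is the sum of the component counts of $G_1\setminus\{v\}$ and $G_2\setminus\{v\}$, each of these equals $1$; in particular $v$ is not a cut vertex of $G_2$, and $G_2\setminus\{v\}$ is connected. Next, since the single edge $\{v,f_v\}$ has $\{\emptyset\}$ as its only cutset and its deletion of $v$ is the single vertex $\{f_v\}$, for this decomposition the family $\mathcal{B}$ of Proposition \ref{cutset} is empty, the families $\mathcal{A}$ and $\mathcal{C}$ together exhaust $\mathfrak{C}(G_1)$, and $\mathcal{D}$ contributes exactly the sets $T_1\cup\{v\}$ with $\mathcal{N}_{G_1}(v)\not\subseteq T_1\in\mathfrak{C}(G_1\setminus\{v\})$; hence
$$\mathfrak{C}(\overline{G_1})=\mathfrak{C}(G_1)\ \cup\ \{\,T_1\cup\{v\}\mid \mathcal{N}_{G_1}(v)\not\subseteq T_1\in\mathfrak{C}(G_1\setminus\{v\})\,\}.$$

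I would then check $c_{\overline{G_1}}(T)=|T|+1$ on the three kinds of cutsets. For $S\in\mathfrak{C}(G_1)$ with $v\notin S$: here $S\in\mathcal{A}\subseteq\mathfrak{C}(G)$, and since $G_2$ is connected and attached at $v\notin S$ we have $c_G(S)=c_{G_1}(S)$, while $\overline{G_1}\setminus S$ just appends the whisker to the component of $v$, so $c_{\overline{G_1}}(S)=c_{G_1}(S)=c_G(S)=|S|+1$. For $S=T_1\cup\{v\}\in\mathfrak{C}(G_1)$ with $v\in S$: then $T_1\in\mathfrak{C}(G_1\setminus\{v\})$ and $\mathcal{N}_{G_1}(v)\not\subseteq T_1$ (otherwise $v$ would be isolated in $G_1\setminus T_1$, not a cut vertex), so $S\in\mathcal{D}\subseteq\mathfrak{C}(G)$; removing $S$ from $G$ splits off $G_2\setminus\{v\}$ as a single component, so $c_G(S)=c_{G_1}(S)+1$ and hence $c_{G_1}(S)=|S|$, whereas $\overline{G_1}\setminus S$ has $f_v$ as an extra isolated vertex, so $c_{\overline{G_1}}(S)=c_{G_1}(S)+1=|S|+1$. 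Finally for $T=T_1\cup\{v\}$ with $\mathcal{N}_{G_1}(v)\not\subseteq T_1\in\mathfrak{C}(G_1\setminus\{v\})$: again $T\in\mathcal{D}\subseteq\mathfrak{C}(G)$ and $G\setminus T=((G_1\setminus\{v\})\setminus T_1)\sqcup(G_2\setminus\{v\})$, so $c_G(T)=c_{G_1\setminus\{v\}}(T_1)+1$ and unmixedness gives $c_{G_1\setminus\{v\}}(T_1)=|T_1|+1$; since $\overline{G_1}\setminus T=((G_1\setminus\{v\})\setminus T_1)\sqcup\{f_v\}$ we obtain $c_{\overline{G_1}}(T)=c_{G_1\setminus\{v\}}(T_1)+1=|T_1|+2=|T|+1$. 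Thus $J_{\overline{G_1}}$ is unmixed.

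The step that requires care, and the reason one should not simply invoke Corollary \ref{unmixed} for the decomposition of $\overline{G_1}$ into $G_1$ and the edge $\{v,f_v\}$, is that that corollary asks $J_{\overline{G_1}\setminus\{v\}}$, i.e.\ $J_{G_1\setminus\{v\}}$, to be unmixed, and this can genuinely fail even when $J_G$ is unmixed: a cutset $T_1\in\mathfrak{C}(G_1\setminus\{v\})$ containing $\mathcal{N}_{G_1}(v)$ satisfies $c_{G_1\setminus\{v\}}(T_1)=|T_1|$ rather than $|T_1|+1$. What rescues the lemma is that $\mathfrak{C}(\overline{G_1})$ is correspondingly smaller, containing $T_1\cup\{v\}$ only when $\mathcal{N}_{G_1}(v)\not\subseteq T_1$, so the argument must rest on the exact form of $\mathfrak{C}(\overline{G_1})$ produced by Proposition \ref{cutset} rather than on Corollary \ref{unmixed}.
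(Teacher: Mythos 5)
Your proof is correct. The paper's own argument is shorter and more uniform: for an arbitrary $T\in\mathfrak{C}(\overline{G_1})$ it observes directly that $T\in\mathfrak{C}(G)$ and that the connected components of $G\setminus T$ are obtained from those of $\overline{G_1}\setminus T$ by replacing the whisker vertex $f_v$ (or the whisker edge) with $G_2\setminus\{v\}$ (resp.\ $G_2$), so that $c_{\overline{G_1}}(T)=c_G(T)$ and unmixedness transfers in one stroke, without the preliminary observations that $v$ is a cut vertex and $G_2\setminus\{v\}$ is connected, and without an explicit description of $\mathfrak{C}(\overline{G_1})$. You instead compute $\mathfrak{C}(\overline{G_1})$ from Proposition \ref{cutset} applied to the decomposition $\overline{G_1}=G_1\cup K_2$ and verify $c_{\overline{G_1}}(T)=|T|+1$ case by case, matching each cutset with a member of the families $\mathcal{A}$ or $\mathcal{D}$ for $G$; every step checks out, and the connectedness of $G_2\setminus\{v\}$, which you correctly extract from $c_G(\{v\})=2$, is exactly what makes the component counts work in the cases with $v\in T$. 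What your version buys is an explicit formula for $\mathfrak{C}(\overline{G_1})$ and a clear explanation of why Corollary \ref{unmixed} cannot be invoked directly (namely, $J_{G_1\setminus\{v\}}$ need not be unmixed); what the paper's version buys is brevity and independence from Proposition \ref{cutset}. Both rest on the same underlying transfer $\mathfrak{C}(\overline{G_1})\subseteq\mathfrak{C}(G)$ with equal component counts.
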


\begin{proof}
Without loss of generality, we assume $G$ is connected. Then $\overline{G_{i}}$ is connected for $i=1,2$. Let $T\in \mathfrak{C}(\overline{G_{1}})$. Then it is clear that $T\in \mathfrak{C}(G)$. Let $A_{1},\ldots,A_{k}$ be the connected components of $\overline{G_{1}}\setminus T$. One of components, say $A_{1}$, will contain the vertex $f_{v}$. We consider the graph $A^{\prime}_{1}$ as follows
\begin{align*}
 \bullet\,\,\,\,\, V(A^{\prime}_{1})&=(V(A_{1})\setminus \{f_{v}\}) \cup V(G_{2}\setminus\{v\});\\
 \bullet\,\,\,\,\, E(A^{\prime}_{1})&=E(A_{1}\setminus\{f_{v}\})\cup E(G_{2})\,\,\,\,\text{if}\,\,\, \{v,f_{v}\}\in E(A_{1}),\\
 &=E(G_{2}\setminus\{v\})\,\,\,\, \text{if}\,\,\, \{v,f_{v}\}\not\in E(A_{1})\,\, \text{i.e.},\,\, V(A_{1})=\{f_{v}\}.
\end{align*}
Then $A^{\prime}_{1}$ is connected and $A^{\prime}_{1}, A_{2},\ldots, A_{k}$ are the only connected components of $G\setminus T$. Therefore, $c_{\overline{G_{1}}}(T)=c_{G}(T)$ for all $T\in \mathfrak{C}(\overline{G_{1}})$ and so, $J_{\overline{G_{1}}}$ is unmixed as $J_{G}$ is so. Similarly, $J_{\overline{G_{2}}}$ is unmixed.
\end{proof}

\begin{lemma}\label{Gbaracc}
Let $G=G_{1}\cup G_{2}$ be such that $V(G_{1})\cap V(G_{2})=\{v\}$. Consider the graph $\overline{G_{i}}$ by attaching a whisker $\{v,f_{v}\}$ to $G_{i}$ at $v$. If $J_{G}$ is unmixed and $\overline{G_{1}}, \overline{G_{2}}$ are accessible, then $G$ is accessible.
\end{lemma}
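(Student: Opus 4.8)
The plan is to verify the two defining requirements of accessibility for $G$ directly: that $J_G$ is unmixed (already given by hypothesis) and that $\mathfrak{C}(G)$ is an accessible set system. So the real work is to show every non-empty $T \in \mathfrak{C}(G)$ is accessible, using the accessibility of $\overline{G_1}$ and $\overline{G_2}$. The natural tool is Proposition~\ref{cutset}, which writes $\mathfrak{C}(G) = \mathcal{A} \cup \mathcal{B} \cup \mathcal{C} \cup \mathcal{D}$ in terms of the cutsets of $G_1, G_2, G_1\setminus\{v\}, G_2\setminus\{v\}$. The first step is to relate these pieces to cutsets of the whiskered graphs $\overline{G_i}$: since $f_v$ is a free vertex of $\overline{G_i}$, no cutset of $\overline{G_i}$ contains $f_v$, and one checks (as in the proof of Lemma~\ref{barunm} / Lemma~\ref{Gvunmacc}) that $\mathfrak{C}(\overline{G_i}) = \mathfrak{C}(G_i) \cup \{T_i \cup \{v\} \mid \mathcal{N}_{G_i}(v) \not\subseteq T_i \in \mathfrak{C}(G_i\setminus\{v\})\}$ — that is, $\mathfrak{C}(\overline{G_i})$ records exactly the data $\mathfrak{C}(G_i)$ together with $\mathfrak{C}(G_i\setminus\{v\})$ (restricted by the neighborhood condition). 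So accessibility of $\overline{G_i}$ gives us accessibility statements for cutsets of both $G_i$ and $G_i\setminus\{v\}$.

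Next I would take a non-empty $T \in \mathfrak{C}(G)$ and case on which of $\mathcal{A}, \mathcal{B}, \mathcal{C}, \mathcal{D}$ it lies in. If $T \in \mathcal{A}$, write $T = S_1 \cup S_2$ with $v \notin S_i \in \mathfrak{C}(G_i)$; pick a non-empty $S_i$, which is then a non-empty cutset of $\overline{G_i}$ not containing $f_v$, hence accessible in $\overline{G_i}$ — remove the witnessing element $t$, and since $v \notin S_i$ the set $(S_i\setminus\{t\}) \cup S_j$ is still a valid element of $\mathcal{A}$ (one must check that dropping $t$ from $S_i$ and leaving $S_j$ fixed keeps it a cutset of $G$, which follows from the $\mathcal{A}$ description), so $T\setminus\{t\} \in \mathfrak{C}(G)$. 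If $T \in \mathcal{B}$, write $T = T_1 \cup S_2$ with $T_1 \in \mathfrak{C}(G_1\setminus\{v\})$ and $v \in S_2 \in \mathfrak{C}(G_2)$: here $S_2$ corresponds to a cutset of $\overline{G_2}$ containing $v$ (namely, replace the component of $v$ in $G_2\setminus S_2$... more precisely $S_2 \in \mathfrak{C}(G_2)$ with $v \in S_2$ lifts to a cutset of $\overline{G_2}$ through the whisker), and $T_1 \cup \{v\}$ corresponds via the neighborhood-condition part of $\mathfrak{C}(\overline{G_1})$. The subtlety is to choose which side to shrink: accessibility of the relevant $\overline{G_i}$-cutset produces a witness $t$; if $t \neq v$ one typically gets $T\setminus\{t\} \in \mathfrak{C}(G)$ immediately, while if the only available witness "is" $v$ one has to argue that $v$ can actually be removed from $S_2$ in $G$, i.e. that $T_1 \cup (S_2\setminus\{v\}) \cup (\text{adjustment}) \in \mathfrak{C}(G)$, using unmixedness of $J_G$ and Proposition~\ref{accpropunm}. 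The cases $\mathcal{C}$ and $\mathcal{D}$ are handled symmetrically, with $\mathcal{D}$ ($T = T_1 \cup T_2 \cup \{v\}$) being the richest: there $T_i \cup \{v\} \in \mathfrak{C}(\overline{G_i})$, accessibility gives a removable element on each side, and one shows that removing a removable element from $T_1$ (say) keeps $T \setminus \{t\} = (T_1\setminus\{t\}) \cup T_2 \cup \{v\}$ in $\mathcal{D}$ — the neighborhood conditions $\mathcal{N}_{G_i}(v) \not\subseteq T_i$ only get easier after deletion, so this is clean; the one place needing care is if both $T_1$ and $T_2$ are empty, in which case $T = \{v\}$ and one needs $\emptyset \in \mathfrak{C}(G)$, which is trivially true.

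The main obstacle I anticipate is the bookkeeping in cases $\mathcal{B}$ and $\mathcal{C}$: the "removable element" furnished by accessibility of $\overline{G_i}$ lives in the whiskered graph, and translating "$T' := (T_i\cup\{v\})\setminus\{t\} \in \mathfrak{C}(\overline{G_i})$" back into an honest statement "$T\setminus\{t\} \in \mathfrak{C}(G)$" requires distinguishing whether the witness $t$ equals $v$ or not, and when $t = v$, checking that the resulting set (which in $\overline{G_i}$ becomes $T_i$, a cutset of $G_i \setminus \{v\}$, but now with $v$ present again from the other side!) remains a cutset of $G$. Here I would invoke Proposition~\ref{accpropunm} applied at the cut vertex $v$ of $G$ — this is precisely the statement that characterizes when the cutsets behave well under removing $v$ — together with the unmixedness hypothesis on $J_G$, to close the gap. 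Once all four cases are dispatched, accessibility of $G$ follows by definition. A secondary, more elementary obstacle is just carefully verifying the formula for $\mathfrak{C}(\overline{G_i})$ in the first step, but this is a routine whisker computation parallel to Lemma~\ref{barunm}.
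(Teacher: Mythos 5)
Your skeleton is the same as the paper's: decompose $\mathfrak{C}(G)=\mathcal{A}\cup\mathcal{B}\cup\mathcal{C}\cup\mathcal{D}$ via Proposition~\ref{cutset}, identify $\mathfrak{C}(\overline{G_i})$ with $\mathfrak{C}(G_i)$ together with the sets $T_i\cup\{v\}$ for $\mathcal{N}_{G_i}(v)\not\subseteq T_i\in\mathfrak{C}(G_i\setminus\{v\})$, and argue case by case. Case $\mathcal{A}$ is handled correctly. But the proposal has a genuine gap exactly where you flag ``the main obstacle'': cases $\mathcal{B}$ and $\mathcal{C}$ are never actually resolved — you describe a bookkeeping problem (what to do when the only removable witness is $v$) and gesture at Proposition~\ref{accpropunm} plus an unspecified ``adjustment,'' but no argument is given. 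The paper's resolution is different and much cleaner: it shows that $\mathcal{B}$ and $\mathcal{C}$ are empty of genuinely new cases, because if $T=T_1\cup S_2\in\mathcal{B}$ with $\mathcal{N}_{G_1}(v)\subseteq T_1$, then (using Lemma~\ref{barunm} to get unmixedness of the $J_{\overline{G_i}}$) a component count gives $c_G(T)=|T_1|+1+|S_2|+1-2=|T|$, contradicting unmixedness of $J_G$; hence $\mathcal{N}_{G_1}(v)\not\subseteq T_1$ and $T\in\mathcal{D}$. Without this observation (or a complete substitute), your proof does not close.

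There is a second, smaller gap in case $\mathcal{D}$. You write that accessibility of $\overline{G_i}$ applied to $T_i\cup\{v\}$ ``gives a removable element on each side'' and then remove ``a removable element from $T_1$'' — but plain accessibility only yields that \emph{some} element of $T_i\cup\{v\}$ is removable, and that element may be $v$ itself; removing $v$ from $T$ does not obviously produce a cutset of $G$, since $T_2\in\mathfrak{C}(G_2\setminus\{v\})$ need not lie in $\mathfrak{C}(G_2)$. The paper invokes \cite[Lemma 4.16]{acc} together with Proposition~\ref{accnoncut} precisely to produce a witness $t_i\in T_i$ with $t_i\neq v$. (One can alternatively argue that if on \emph{both} sides the only witness is $v$, then $T_1\in\mathfrak{C}(\overline{G_1})$ and $T_2\in\mathfrak{C}(\overline{G_2})$ with $v\notin T_i$, whence $T_i\in\mathfrak{C}(G_i)$ and $T\setminus\{v\}\in\mathcal{A}$; but some such dichotomy must be made explicit.) As written, the proposal is incomplete on both points.
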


\begin{proof}
Without loss of generality, we can assume $G$ is connected. Let $\mathfrak{C}(G)=\mathcal{A}\cup \mathcal{B}\cup \mathcal{C}\cup \mathcal{D}$ as described in Proposition \ref{cutset}. Let $T\in \mathcal{A}$. Then $T=S_{1}\cup S_{2}$, where $v\not\in S_{i}\in \mathfrak{C}(G_{i})$ for $i=1,2$. Clearly, $S_{i}\in \mathfrak{C}(\overline{G_{i}})$ and so, there exists $s_{i}\in S_{i}$ such that $S_{i}\setminus\{s_{i}\}\in \mathfrak{C}(\overline{G_{i}})$, where $i\in\{1,2\}$. Now, $v\not\in S_{1}\setminus\{s_{1}\}$ imply $S_{1}\setminus\{s_{1}\}\in \mathfrak{C}(G_{1})$ by \cite[Lemma 2.2]{raufrin}. Therefore, $T\setminus\{s_{1}\}\in \mathcal{A}$ and so, $T$ is accessible. Let $T=T_{1}\cup S_{2}\in\mathcal{B}$, where $T_{1}\in \mathfrak{C}(G_{1}\setminus\{v\})$ and $v\in S_{2}\in \mathfrak{C}(G_{2})$. If $\mathcal{N}_{G_{1}}(v)\subseteq T_{1}$, then $T_{1}\in \mathfrak{C}(\overline{G_{1}})$ and one connected component of $G_{1}\setminus T_{1}$ consists of only the edge $\{v,f_{v}\}$. Again, $S_{2}\in \mathfrak{C}(\overline{G_{2}})$. Since $J_{\overline{G_{i}}}$ is unmixed for $i=,1,2$, we have
$$c_{G}(T)=\vert T_{1}\vert+1+\vert S_{2}\vert +1 -2=\vert T\vert,$$
which gives a contradiction to the fact that $J_{G}$ is unmixed. Thus, $\mathcal{N}_{G_{1}}(v)\not\subseteq T_{1}$ and so, $T\in \mathcal{D}$ also. Similarly, $T\in\mathcal{C}$ implies $T\in\mathcal{D}$. Now pick $T\in\mathcal{D}$. Then $T=T_{1}\cup T_{2}\cup \{v\}$, where $\mathcal{N}_{G_{i}}(v)\not\subseteq T_{i}\in \mathfrak{C}(G_{i}\setminus\{v\})$ for $i=1,2$. Then $T_{i}\cup\{v\}\in \mathfrak{C}(\overline{G_{i}})$ and by \cite[Lemma 4.16]{acc} and Proposition \ref{accnoncut}, we have $v\neq t_{i}\in T_{i}$ such that $T_{i}\cup\{v\}\setminus\{t_{i}\}\in \mathfrak{C}(\overline{G_{i}})$ for $i=1,2$. Then $T\setminus\{t_{1}\}\in \mathcal{D}$ which implies $T$ is accessible. So, $\mathfrak{C}(G)$ is an accessible set system and unmixedness of $J_{G}$ is given. Hence $G$ is accessible.
\end{proof}

\begin{theorem}\label{baraccG}
Let $G$ be a simple connected graph such that $J_{G}$ is unmixed, and for each block $B$ of $G$, $\overline{B}$ is accessible. Then $G$ is accessible.
\end{theorem}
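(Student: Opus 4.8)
The plan is to induct on the number of blocks of $G$ and peel off one ``leaf block'' at a time, using the gluing lemmas already established (Corollary \ref{unmixed}, Lemma \ref{barunm}, Lemma \ref{Gbaracc}) together with the free-vertex result Proposition \ref{freacc}. If $G$ has a single block, then $G=\overline{B}$ (there are no cut vertices, so no whiskers are attached), and the hypothesis says exactly that $G$ is accessible; this is the base case. For the inductive step, choose a block $B_{0}$ of $G$ containing exactly one cut vertex $v$ of $G$ (a leaf of the block tree always exists). Write $G=G_{1}\cup G_{2}$ with $V(G_{1})\cap V(G_{2})=\{v\}$, where $G_{1}$ is the union of $B_{0}$ together with all the branches of $G$ hanging off the non-cut vertices of $B_{0}$ — but since $v$ is the only cut vertex of $G$ in $B_{0}$, in fact $G_{1}$ has $B_{0}$ as its unique block aside from whiskers, so $G_{1}=\overline{B_{0}}^{W}$ for the appropriate whisker set, and $\overline{G_{1}}$ (adding a whisker at $v$) is precisely $\overline{B_{0}}$ with respect to $G$ up to renaming the whisker vertex. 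Thus $\overline{G_{1}}=\overline{B_{0}}$ is accessible by hypothesis. The graph $G_{2}$ is the ``rest'' of $G$; its blocks are the blocks of $G$ other than $B_{0}$, and for each such block $B$ the graph $\overline{B}$ computed with respect to $G_{2}$ coincides with $\overline{B}$ with respect to $G$ (attaching a whisker at $v$ in $G_{2}$ does not change the branches at the other cut vertices, and $v$'s own branch in $G_2$ is now just a whisker, matching the definition). Hence $G_{2}$ has strictly fewer blocks than $G$ and satisfies the hypotheses of the theorem once we know $J_{G_{2}}$, equivalently $J_{\overline{G_{2}}}$, is unmixed.

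The unmixedness bookkeeping is the part that needs care. From $J_{G}$ unmixed, Lemma \ref{barunm} gives that $J_{\overline{G_{1}}}$ and $J_{\overline{G_{2}}}$ are unmixed. Since $\overline{G_{1}}=\overline{B_{0}}$ is accessible (in particular unmixed), and $\overline{G_{2}}$ is unmixed with one fewer block, I would like to apply the induction hypothesis to $\overline{G_{2}}$: for each block $B$ of $\overline{G_{2}}$, the graph $\overline{B}$ (with respect to $\overline{G_{2}}$) equals $\overline{B}$ (with respect to $G$), which is accessible by hypothesis — here one must check that the newly added whisker $\{v,f_v\}$ in $\overline{G_2}$ becomes, for the block $B$ containing $v$, just another whisker, which is exactly what the definition of $\overline{B}^{W}$ prescribes. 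So by induction $\overline{G_{2}}$ is accessible. Then $G_2$ is accessible as well: $f_v$ is a free vertex of $\overline{G_2}$ with $G_2 = \overline{G_2}\setminus\{f_v\}$, and since removing the whisker from an accessible graph keeps it accessible (this follows from Theorem~\ref{baracc} applied to $\overline{G_2}$, or directly: cutsets never contain the whisker neighbor when it supports a pendant, and $J_{G_2}$ is unmixed by Lemma~\ref{freunm}(i) since $\mathcal{N}(f_v)=\{v\}\notin\mathfrak{C}(\overline{G_2})$ as $v$ has a neighbor outside any cutset). Thus both $\overline{G_{1}}$ and $\overline{G_{2}}$ are accessible.

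Now I finish by invoking Lemma \ref{Gbaracc}: we have $G=G_{1}\cup G_{2}$ with $V(G_{1})\cap V(G_{2})=\{v\}$, the graphs $\overline{G_{1}},\overline{G_{2}}$ (obtained by attaching a whisker at $v$) are accessible, and $J_{G}$ is unmixed by hypothesis; the lemma concludes that $G$ is accessible, completing the induction. I expect the main obstacle to be purely notational rather than mathematical: verifying precisely that the blocks-with-whiskers of the smaller graph $\overline{G_{2}}$ agree with those of $G$, so that the induction hypothesis genuinely applies. One must track that (i) the whisker freshly attached at $v$ in $\overline{G_{2}}$ is absorbed into the whisker set $W$ for the block of $\overline{G_2}$ containing $v$, matching the definition of $\overline{B}^{W}$, and (ii) no spurious new cut vertices are created. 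Once this identification is pinned down, the rest is a direct chain of the cited lemmas. (An alternative, avoiding the explicit block-tree induction, is to induct on $|V(G)|$ directly, repeatedly splitting off a single whisker via Proposition~\ref{freacc} whenever $G$ has a free vertex that is a whisker tip, and otherwise splitting at a cut vertex via Lemma~\ref{Gbaracc}; both routes rely on the same three gluing lemmas.)
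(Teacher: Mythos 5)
Your overall strategy (split $G$ at a cut vertex $v$ into $G_{1}\cup G_{2}$, pass to the whiskered graphs $\overline{G_{1}},\overline{G_{2}}$, get unmixedness from Lemma \ref{barunm}, accessibility of the pieces from the induction hypothesis, and conclude with Lemma \ref{Gbaracc}) is exactly the paper's strategy. However, your induction scheme has a genuine gap: the parameter does not decrease. When you peel off the leaf block $B_{0}$ at $v$, the graph $\overline{G_{2}}$ to which you apply the induction hypothesis has the blocks of $G_{2}$ \emph{plus} the new whisker block $\{v,f_{v}\}$ --- that is, exactly as many blocks as $G$ (you removed $B_{0}$ and added a $K_{2}$). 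Worse, if the chosen leaf block $B_{0}$ is itself a whisker, then $\overline{G_{2}}$ is isomorphic to $G$ and the reduction is literally circular. You cannot always avoid this by choosing a non-$K_{2}$ leaf block: if $G$ consists of one or two large blocks with whiskers attached at several vertices (or if $G$ is a path), then \emph{every} leaf of the block tree is a whisker and your reduction makes no progress at all. The paper repairs exactly these points: it inducts on $\lvert V(G)\rvert$ rather than on the number of blocks, disposes separately of the cases where all blocks are complete and where $G=\overline{B}$ for a single block $B$ (the latter being immediate from the hypothesis, but not covered by your base case of ``one block''), and chooses the splitting cut vertex $v$ inside a non-complete block so that $G\setminus\{v\}$ has no isolated vertex --- this guarantees both $\overline{G_{1}}$ and $\overline{G_{2}}$ have strictly fewer vertices than $G$.

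Two smaller remarks. First, the paragraph arguing that $G_{2}$ itself is accessible is both unnecessary (Lemma \ref{Gbaracc} only needs $\overline{G_{1}}$ and $\overline{G_{2}}$ accessible together with $J_{G}$ unmixed) and incorrect as written: $\mathcal{N}_{\overline{G_{2}}}(f_{v})=\{v\}$ \emph{is} a cutset of $\overline{G_{2}}$, since the whisker makes $v$ a cut vertex, so Lemma \ref{freunm}(i) does not apply, and Theorem \ref{baracc} does not produce $G_{2}$ (every $\overline{B}^{W}$ retains a whisker or branch at each cut vertex). Second, your verification that $\overline{B}$ with respect to $\overline{G_{2}}$ coincides with $\overline{B}$ with respect to $G$ for each block $B$ of $G_{2}$ is correct and is the right bookkeeping; it is the termination of the induction, not this identification, that needs to be fixed.
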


\begin{proof}
We will use induction on $n$, where $\vert V(G)\vert=n$. For, $n=1$ and $n=2$, $G$ is complete and we are done. If all the blocks of $G$ is complete then there is nothing to prove. Let $n>2$. If there is a block $B$ of $G$ for which $\overline{B}=G$, then $G$ is accessible as $\overline{B}$ is so. Now consider a block $B$ of $G$ which is not complete and take a cut vertex $v$ of $G$ belong to $V(B)$ such that $G\setminus\{v\}$ does not contain a $K_{1}$. Let $G=G_{1}\cup G_{2}$ be such that $V(G_{1})\cap V(G_{2})=\{v\}$ and $V(B)\subseteq V(G_{1})$. Now, Consider the graph $\overline{G_{i}}$ as described in Lemma \ref{Gbaracc}, where $i=1,2$. Then by Lemma \ref{barunm}, $J_{\overline{G_{1}}}$ and $J_{\overline{G_{2}}}$ are unmixed as $J_{G}$ is unmixed. Thus, by induction hypothesis, $\overline{G_{i}}$ is accessible for $i=1,2$. Hence by Lemma \ref{Gbaracc}, $G$ is accessible.
\end{proof}

\begin{lemma}\label{Gv-vsu}
Let $G=G_{1}\cup G_{2}$ be a simple connected graph such that $V(G_{1})\cap V(G_{2})=\{v\}$, where $v$ is a cut vertex of $G$. Let $J_{G}$ and $J_{G\setminus\{v\}}$ be unmixed. Then $J_{G_{v}}$ is strongly unmixed implies $J_{G_{v}\setminus\{v\}}$ is strongly unmixed.
\end{lemma}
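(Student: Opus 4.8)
I would prove this by induction on $n=\vert V(G)\vert=\vert V(G_{v})\vert$, following the pattern of Propositions \ref{Gvvsu} and \ref{Gvsu}. Write $H_{i}=G_{i}\setminus\{v\}$ and $N_{i}=\mathcal{N}_{G_{i}}(v)$, so $\mathcal{N}_{G_{v}}(v)=N_{1}\cup N_{2}$. If one of $G_{1},G_{2}$ is just the vertex $\{v\}$ the assertion is trivial, and if $G_{v}$ is complete then $G_{v}\setminus\{v\}$ is complete, so assume $G_{v}$ is non-complete and $G_{1},G_{2}$ each carry an edge. Since $J_{G_{v}}$ is strongly unmixed there is a cut vertex $u$ of $G_{v}$ for which $J_{G_{v}\setminus\{u\}}$, $J_{(G_{v})_{u}}$ and $J_{(G_{v})_{u}\setminus\{u\}}$ are strongly unmixed; as $v$ is a free vertex of $G_{v}$ we have $u\neq v$, and after relabelling we may take $u\in V(G_{1})$, so that $G\setminus\{u\}$ is again of the form $(G_{1}\setminus\{u\})\cup G_{2}$ glued at $v$.

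The inductive step consists in pushing the operation ``make $\mathcal{N}(v)$ a clique, then delete $v$'' through the cut vertex $u$, using the identities $(G_{v})\setminus\{u\}=(G\setminus\{u\})_{v}$, $(G_{v})_{u}=\big((G_{1})_{u}\cup G_{2}\big)_{v}$ and $(G_{v})_{u}\setminus\{u\}=\big((G_{1})_{u}\setminus\{u\}\cup G_{2}\big)_{v}$; each of the graphs $G\setminus\{u\}$, $(G_{1})_{u}\cup G_{2}$, $(G_{1})_{u}\setminus\{u\}\cup G_{2}$ has fewer vertices and is a union glued at $v$, and either satisfies the hypotheses of the present lemma (whereupon the induction hypothesis applies, after checking the needed unmixedness via Lemma \ref{barunm}, Corollary \ref{unmixed} and Lemma \ref{Gvunmacc}) or decomposes at the free vertex $v$ into complete-or-smaller pieces (whereupon Remark \ref{remglu}, Proposition \ref{Gvvsu} and Proposition \ref{Gvsu} apply). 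In this way one obtains that $J_{(G_{v}\setminus\{v\})\setminus\{u\}}=J_{(G_{v}\setminus\{u\})\setminus\{v\}}$ and $J_{(G_{v}\setminus\{v\})_{u}\setminus\{u\}}=J_{((G_{v})_{u}\setminus\{u\})\setminus\{v\}}$ are strongly unmixed, and then $J_{(G_{v}\setminus\{v\})_{u}}$ is strongly unmixed by Lemma \ref{freeGsu} since $u$ is a free vertex of $(G_{v}\setminus\{v\})_{u}$. If the connected components of $G_{v}\setminus\{v\}$ are not all complete, one finally checks that $G_{v}\setminus\{v\}$ admits a cut vertex with the three required properties (either $u$ itself, when it remains a cut vertex after deleting $v$, or the cut vertex produced by this same analysis on whichever side carries it), and the definition of strongly unmixed closes the induction; the case $u\in V(G_{2})$ is symmetric.

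The step I expect to be the real obstacle is showing that $J_{G_{v}\setminus\{v\}}$ is unmixed, which is needed before one is even allowed to invoke the cut‑vertex clause of the definition. Deleting the free vertex $v$ from $G_{v}$ does not preserve unmixedness in general, and here $\mathcal{N}_{G_{v}}(v)=\mathcal{N}_{G}(v)$ can genuinely be a cutset of $G_{v}$ (for instance $G=P_{4}$ with $v$ its central vertex), so neither Lemma \ref{freunm}(i) nor Proposition \ref{freesu} is available. The hypotheses on $G\setminus\{v\}$ must be used essentially: from $J_{G}$ unmixed and $\{v\}\in\mathfrak{C}(G)$ one gets $c_{G}(\{v\})=2$, so $H_{1}$ and $H_{2}$ are connected, and then unmixedness of $J_{G\setminus\{v\}}$ lets one apply Proposition \ref{accpropunm} to $G$, $v$ with components $H_{1},H_{2}$; this, combined with the comparison of cutsets from \cite[Lemma 2.2]{raufrin} and the fact that $N_{1}\cup N_{2}$ is a clique of $G_{v}\setminus\{v\}$, pins down the component count $c_{G_{v}\setminus\{v\}}(T)=\vert T\vert+1$ for every $T\in\mathfrak{C}(G_{v}\setminus\{v\})$, including the ``bad'' cutsets containing $\mathcal{N}_{G_{v}}(v)$.
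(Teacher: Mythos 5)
Your outline has the right overall shape --- induct on $\vert V(G)\vert$, extract a cut vertex from a strongly unmixed structure, push the operation $(\cdot)_v\setminus\{v\}$ through it, and close with Lemma \ref{freeGsu} --- but the step you wave off as bookkeeping (``after checking the needed unmixedness via Lemma \ref{barunm}, Corollary \ref{unmixed} and Lemma \ref{Gvunmacc}'') is where the proof actually lives, and with your choice of auxiliary graphs it does not go through. To invoke the induction hypothesis for the component $C$ of $G\setminus\{u\}$ containing $v$, you must know that $J_{C}$ and $J_{C\setminus\{v\}}$ are unmixed. What you actually know is that $J_{C_v}$ is unmixed (it is a component of the strongly unmixed $(G\setminus\{u\})_v=G_v\setminus\{u\}$), and that does not yield unmixedness of $J_C$: the cutsets of $C$ containing $v$ are invisible to $C_v$, and controlling their component counts forces you to know that $J_{C_1\setminus\{v\}}$ is unmixed, where $C_1$ is the $G_1$-side of $C$ --- i.e.\ you need unmixedness of pieces of $(G_1\setminus\{v\})\setminus\{u\}$, which none of the hypotheses or cited lemmas provides (your $u$ was chosen as a cut vertex of $G_v$, not of $G_1\setminus\{v\}$). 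The same difficulty recurs for $((G_1)_u\setminus\{u\})\cup G_2$; and note in passing that $(G_1)_u\cup G_2$ has the \emph{same} vertex set as $G$, so the induction cannot be applied to it as written. This is exactly the obstruction the paper's argument is engineered around: it first disposes of the degenerate case $\vert\mathcal{N}_{G_i}(v)\vert=1$ by decomposability, and in the main step it takes the cut vertex $x$ from the strongly unmixed structure of $(G_1)_v$ (not of $G_v$), forming the auxiliary graphs $G^{x}=(G_2)_v\cup H^{\prime}$ and $L=\bigl(((G_1)_v)_x\setminus\{x\}\bigr)\cup (G_2)_v$, both of whose pieces have $v$ as a \emph{free} vertex; Remark \ref{remglu} then gives their unmixedness for free from the strong unmixedness of the pieces, and Proposition \ref{accpropunm} combined with the hypothesis that $J_{G\setminus\{v\}}$ is unmixed gives unmixedness after deleting $v$.

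Separately, you have mislocated ``the real obstacle.'' Unmixedness of $J_{G_v\setminus\{v\}}$ is routine: by \cite[Lemma 4.5]{acc} (quoted in the proof of Proposition \ref{freesu}), $\mathfrak{C}(G_v\setminus\{v\})=\mathfrak{C}(G_v)\setminus\{T\mid \mathcal{N}_G(v)\subseteq T\}$, and for every surviving cutset the component of $G_v\setminus T$ containing $v$ survives the deletion of $v$, so $c_{G_v\setminus\{v\}}(T)=c_{G_v}(T)$ and unmixedness passes directly from $J_{G_v}$ (strongly unmixed by hypothesis) to $J_{G_v\setminus\{v\}}$, without using $J_{G\setminus\{v\}}$ at all; in particular there are no ``bad'' cutsets of $G_v\setminus\{v\}$ containing $\mathcal{N}_G(v)$ to worry about. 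The hypothesis that $J_{G\setminus\{v\}}$ is unmixed is needed elsewhere: precisely to propagate the induction, i.e.\ to certify that the smaller glued graphs still satisfy the ``$J_{(\cdot)\setminus\{v\}}$ unmixed'' hypothesis of the lemma.
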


\begin{proof}
By Proposition \ref{Gvvsu} and Proposition \ref{Gvsu}, we have $J_{G_{v}}$ is strongly unmixed if and only if  $J_{(G_{1})_{v}}$ and $J_{(G_{2})_{v}}$ are strongly unmixed. First we will show that if one of $\mathcal{N}_{G_{1}}(v)$ or $\mathcal{N}_{G_{2}}(v)$ is singleton then the result holds. Let $\mathcal{N}_{G_{1}}(v)=\{u\}$. Then $(G_{1})_{v}=G_{1}$. By Proposition \ref{accpropunm}, $J_{G}$ and $J_{G\setminus\{v\}}$ are unmixed imply $u\not\in T$ for any $T\in \mathfrak{C}(G_{1}\setminus\{v\})$ i.e., $u$ is a free vertex in $G_{1}\setminus\{v\}$. In this case, $G_{1}$ is a decomposable graph as 
$$G_{1}=(G_{1})_{v}=(G_{1}\setminus\{v\})\cup K_{2},$$
 where $V(G_{1}\setminus\{v\})\cap V(K_{2})=\{u\}$. Therefore $J_{G_{1}\setminus\{v\}}$ is strongly unmixed by Remark \ref{remglu}. Then we can see that 
$$G_{v}\setminus\{v\}=G_{1}\setminus\{v\}\cup (G_{2})^{\prime}_{v},$$
where $V(G_{1}\setminus\{v\})\cap V((G_{2})^{\prime}_{v})=\{u\}$ and $(G_{2})^{\prime}_{v}\simeq (G_{2})_{v}$, just relabeling the vertex $v$ of $(G_{2})_{v}$ by the vertex $u$. Again, using Remark \ref{remglu}, we get $J_{G_{v}\setminus\{v\}}$ is strongly unmixed.\par 
Now we will use induction on the number of vertices $n$ of $G_{v}\setminus\{v\}$. Since $v$ is a cut vertex of $G$, the base case is $n=2$ and in this case, $G_{v}\setminus\{v\}$ being a $K_{2}$, we are done. Let $n>2$ and the hypothesis is true for all such graphs with less than $n$ vertices. We can assume $\vert \mathcal{N}_{G_{i}}(v)\vert >1$ for $i=1,2$ and in this case, $(G_{i})_{v}$ is complete if and only if $(G_{i})_{v}\setminus\{v\}$ is complete as $J_{(G_{i})_{v}}$ is strongly unmixed, where $i=1,2$. Then we may assume one of $(G_{i})_{v}$ is not complete, say $(G_{1})_{v}$, otherwise, $G_{v}\setminus\{v\}$ will be complete and the result follows. Now there exists a cut vertex $x\in V(G_{1})$ for which the binomial edge ideals of $(G_{1})_{v}\setminus\{x\}, ((G_{1})_{v})_{x}$, and $((G_{1})_{v})_{x}\setminus\{x\}$ are strongly unmixed. Set $D=G_{v}\setminus\{v\}$. Let $(G_{1})_{v}\setminus\{x\}=H^{\prime}\sqcup H^{\prime\prime}$, where $v\in V(H^{\prime})$. Then $D\setminus \{x\}=H\sqcup H^{\prime\prime}$, where $V(G_{2}\setminus\{v\})\subseteq V(H)$. Note that $H=(G^{x})_{v}\setminus\{v\}$, where $G^{x}=(G_{2})_{v}\cup H^{\prime}$. Then $V((G_{2})_{v})\cap V(H^{\prime})=\{v\}$ and $v$ is a free vertex in both of $(G_{2})_{v}$ and $H^{\prime}$. Since $J_{(G_{2})_{v}}$ and $J_{H^{\prime}}$ are unmixed, by Remark \ref{remglu}, $J_{G^{x}}$ is unmixed. By Proposition \ref{accpropunm}, $J_{G_{2}\setminus\{v\}}$ is unmixed implies there exists no $S\in \mathfrak{C}(G_{2}\setminus\{v\})$ containing $\mathcal{N}_{G_{2}}(v)$ and so, $\mathcal{N}_{(G_{2})_{v}}(v)\not\subseteq T$ for any $T\in \mathfrak{C}((G_{2})_{v}\setminus\{v\})$. If there exists $T^{\prime}\in \mathfrak{C}(H^{\prime}\setminus\{v\})$ such that $\mathcal{N}_{H^{\prime}}(v)\subseteq T^{\prime}$, then $\mathcal{N}_{G_{1}}(v)\subseteq T^{\prime}\cup\{x\}\in \mathfrak{C}(G_{1}\setminus\{v\})$ which is a contradiction to the fact that $J_{G_{1}\setminus\{v\}}$ is unmixed. Therefore, by Proposition \ref{accpropunm}, $G^{x}\setminus\{v\}$ is unmixed. Now, $J_{H^{\prime}_{v}}=J_{H^{\prime}}$ and $J_{(G_{2})_{v}}$ are strongly unmixed and hence by Proposition \ref{Gvsu}, $J_{(G^{x})_{v}}$ is strongly unmixed. Therefore, by induction hypothesis we have $J_{H}$ is strongly unmixed and so is $J_{D\setminus\{x\}}$. Also, we can observe that $$D_{x}\setminus\{x\}= L_{v}\setminus\{v\}\,\,\text{with}\,\, L=\big(((G_{1})_{v})_{x}\setminus\{x\}\big)\cup (G_{2})_{v}.$$
Then $L$ is a decomposable graph and so, $J_{L}$ is unmixed as $J_{((G_{1})_{v})_{x}\setminus\{x\}}$ and $J_{(G_{2})_{v}}$ are unmixed. We have already proved the binomial edge ideals of $(G_{2})_{v}\setminus\{v\}$ and $((G_{1})_{v}\setminus\{v\})\setminus\{x\}$ are unmixed. Therefore, the binomial edge ideal of $\big(((G_{1})_{v})_{x}\setminus\{x\}\big)\setminus\{v\}$ is unmixed and so is $J_{L\setminus\{v\}}$. Again using Proposition \ref{Gvsu}, we see that $J_{L_{v}}$ is strongly unmixed and thus, by induction, $J_{D_{x}\setminus\{x\}}$ is strongly unmixed. By, Lemma \ref{freeGsu}, $J_{D_{x}}$ is also strongly unmixed. Hence $J_{G_{v}\setminus\{v\}}$ is strongly unmixed.
\end{proof}

Recall the notion of block graph of a given graph. Let $G$ be a graph with blocks $B_{1},\ldots, B_{m}$. We denote the \textit{block graph} of $G$ by $\mathscr{B}(G)$, which is defined as follows:
\begin{enumerate}
\item[$\bullet$] $V(\mathscr{B}(G))=\{B_{1},\ldots,B_{m}\}$.
\item[$\bullet$] $E(\mathscr{B}(G))=\{\{B_{i},B_{j}\}\mid V(B_{i})\cap V(B_{j})\neq \phi\}$.
\end{enumerate}
\medskip

\begin{theorem}\label{barsuG}
Let $G$ be a simple graph such that $J_{G}$ is unmixed. If $J_{\overline{B}}$ is strongly unmixed for each block $B$ of $G$, then $J_{G}$ is strongly unmixed.
\end{theorem}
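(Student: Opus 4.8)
The plan is to argue by strong induction on $n=\vert V(G)\vert$, in close parallel with the proof of Theorem~\ref{baraccG}. We may assume $G$ is connected. If $G$ has a single block $B$ then $\overline{B}=G$ and there is nothing to prove, and the same holds at once if $\overline{B}=G$ for some block $B$. If every block of $G$ is complete, we induct using a leaf block $K_{m}$: for $m\geq 3$ a non-cut vertex $w$ of it is free in $G$ with $\mathcal{N}_{G}(w)\notin\mathfrak{C}(G)$ (it contains another free vertex), so $J_{G\setminus\{w\}}$ is unmixed by Lemma~\ref{freunm}; for $m=2$, unmixedness of $J_{G}$ forces the pendant's attaching vertex to lie in exactly two blocks and a short argument again gives $J_{G\setminus\{w\}}$ unmixed; either way $G\setminus\{w\}$ has all blocks complete, so $J_{G\setminus\{w\}}$ is strongly unmixed by induction and $J_{G}$ is strongly unmixed by Lemma~\ref{freeGsu}.

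So suppose $G$ has a non-complete block $B$; as in the proof of Theorem~\ref{baraccG}, fix a cut vertex $v\in V(B)$ of $G$ with $G\setminus\{v\}$ having no isolated vertex, and write $G=G_{1}\cup G_{2}$ with $V(G_{1})\cap V(G_{2})=\{v\}$ and $V(B)\subseteq V(G_{1})$. Then $\vert V(G_{1})\vert\geq\vert V(B)\vert\geq 3$ (a non-complete block has at least three vertices) and $\vert V(G_{2})\vert\geq 3$ (as $G_{2}\neq\{v\}$ and $G\setminus\{v\}$ has no isolated vertex). Let $\overline{G_{i}}$ be $G_{i}$ with a whisker $\{v,f_{v}\}$ attached at $v$; by Lemma~\ref{barunm}, $J_{\overline{G_{i}}}$ is unmixed. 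The blocks of $\overline{G_{i}}$ are those of $G_{i}$ together with the new $K_{2}=\{v,f_{v}\}$, whose block with whiskers is a $P_{2}$; and for every block $B'$ of $G_{i}$ the block with whiskers of $B'$ is the same whether computed inside $\overline{G_{i}}$ or inside $G$, since the two ambient graphs differ only in a branch that $\overline{B'}$ collapses to a single whisker in either case. Hence $\overline{G_{1}}$ and $\overline{G_{2}}$ satisfy the hypotheses of the theorem; since $\vert V(\overline{G_{i}})\vert=\vert V(G_{i})\vert+1<n$, the induction hypothesis yields that $J_{\overline{G_{1}}}$ and $J_{\overline{G_{2}}}$ are strongly unmixed.

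It remains to deduce from $J_{G}$ unmixed and $J_{\overline{G_{1}}},J_{\overline{G_{2}}}$ strongly unmixed that $J_{G}$ is strongly unmixed. As $\vert V(G_{i})\vert\geq 2$, $v$ is a cut vertex of $G$, and we verify Definition~\ref{defsu} for $G$ at $v$. The first point is that the strong-unmixedness recursion for $\overline{G_{i}}$ may be run at the whiskered cut vertex $v$, so that $J_{\overline{G_{i}}\setminus\{v\}}$, $J_{(\overline{G_{i}})_{v}}$ and $J_{(\overline{G_{i}})_{v}\setminus\{v\}}$ are strongly unmixed; here the unmixedness of the intermediate graphs is controlled by Lemmas~\ref{barunm} and~\ref{freunm} and Proposition~\ref{accpropunm}, and the free-vertex and clique-completion passages by Lemmas~\ref{freeGsu} and~\ref{Gv-vsu}. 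Granting this, $\overline{G_{i}}\setminus\{v\}$ is the disjoint union of $G_{i}\setminus\{v\}$ and $\{f_{v}\}$, so $J_{G_{i}\setminus\{v\}}$ is strongly unmixed, and since $G\setminus\{v\}$ is the disjoint union of $G_{1}\setminus\{v\}$ and $G_{2}\setminus\{v\}$, $J_{G\setminus\{v\}}$ is strongly unmixed. Next, $(\overline{G_{i}})_{v}\setminus\{f_{v}\}=(G_{i})_{v}$, and in $(\overline{G_{i}})_{v}$ both $v$ and $f_{v}$ are free vertices, so $\mathcal{N}_{(\overline{G_{i}})_{v}}(f_{v})$ lies in no cutset of $(\overline{G_{i}})_{v}$; hence $J_{(G_{i})_{v}}$ is strongly unmixed by Proposition~\ref{freesu}, and then $J_{G_{v}}$ is strongly unmixed by Proposition~\ref{Gvsu}. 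Finally $J_{G\setminus\{v\}}$ is unmixed — for instance because $G$ is accessible by Theorem~\ref{baraccG} (each $\overline{B}$ being strongly unmixed, hence accessible), so that $G\setminus\{v\}$ is accessible by \cite[Proposition 5.14]{acc} — so Lemma~\ref{Gv-vsu} gives that $J_{G_{v}\setminus\{v\}}$ is strongly unmixed. Therefore $J_{G}$ is strongly unmixed.

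\textbf{The main obstacle.} The delicate step is the claim, used to begin the gluing, that the strong-unmixedness recursion of $\overline{G_{i}}$ can be taken at the \emph{particular} cut vertex $v$, since Definition~\ref{defsu} only guarantees that \emph{some} cut vertex works. This is exactly where the auxiliary whisker $\{v,f_{v}\}$ is essential: it forces the connectivity conditions $\mathcal{N}(v)\not\subseteq T$ needed to invoke Proposition~\ref{accpropunm}, and, after the clique completion $(-)_{v}$, it turns $v$ into a free vertex, which both makes Propositions~\ref{Gvvsu} and~\ref{Gvsu} and Lemma~\ref{Gv-vsu} applicable and lets Proposition~\ref{freesu} strip the whisker back off. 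The remaining effort is the bookkeeping required to keep each intermediate graph unmixed throughout, via Lemmas~\ref{barunm} and~\ref{freunm} and Proposition~\ref{accpropunm}.
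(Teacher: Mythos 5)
Your overall architecture (induct on $\lvert V(G)\rvert$, split $G=G_1\cup G_2$ at a cut vertex $v$ of a non-complete block, pass to the whiskered graphs $\overline{G_1},\overline{G_2}$ via Lemma~\ref{barunm}, and then reassemble with Propositions~\ref{freesu}, \ref{Gvsu} and Lemma~\ref{Gv-vsu}) is plausible, and the bookkeeping around it (the vertex counts, the identification of $\overline{B'}$ computed in $\overline{G_i}$ versus in $G$, the identity $(\overline{G_i})_v\setminus\{f_v\}=(G_i)_v$, the use of Theorem~\ref{baraccG} to get $J_{G\setminus\{v\}}$ unmixed) is essentially correct. But the argument has a genuine gap at exactly the point you label ``Granting this.'' You need: \emph{if $J_{\overline{G_i}}$ is strongly unmixed and $v$ is the whiskered cut vertex, then $J_{\overline{G_i}\setminus\{v\}}$, $J_{(\overline{G_i})_v}$ and $J_{(\overline{G_i})_v\setminus\{v\}}$ are strongly unmixed.} Definition~\ref{defsu} only guarantees that \emph{some} cut vertex of $\overline{G_i}$ admits this recursion, and $\overline{G_i}$ may have many cut vertices; nothing you cite upgrades ``some'' to ``this particular $v$.'' The lemmas you list do not assemble into a proof: Lemma~\ref{freeGsu} builds strong unmixedness \emph{up} when a free vertex is added, Lemma~\ref{Gv-vsu} takes $J_{G_v}$ strongly unmixed as an \emph{input} (which is part of what you are trying to produce), and Lemmas~\ref{barunm}, \ref{freunm} and Proposition~\ref{accpropunm} only control unmixedness, not strong unmixedness. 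What you actually need is the cut-vertex-independence of the strongly unmixed recursion (essentially \cite[Proposition 5.13]{acc}, which would apply here once $J_{\overline{G_i}\setminus\{v\}}$ is known to be unmixed); you neither prove nor cite such a statement, and this is the entire difficulty of the theorem, not a detail.

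For comparison, the paper's proof is organized precisely around avoiding an appeal to an arbitrary cut vertex. Using that $\mathscr{B}(G)$ is a tree (a consequence of unmixedness via \cite[Proposition 1.3]{cactus}), it walks from a leaf block along the block tree to prove a Claim: there exist two blocks $B_p,B_q$ meeting in a cut vertex $v$ such that the recursion data at $v$ is strongly unmixed for \emph{both} $\overline{B_p}$ and $\overline{B_q}$ simultaneously. It then runs a double induction (on the number of vertices, and for fixed vertex count on the number of blocks), handling $G\setminus\{v\}$ blockwise and handling $G_v$, $G_v\setminus\{v\}$ by noting that only the merged block $(B_p\cup B_q)_v$ is new, to which Proposition~\ref{Gvsu} and Lemma~\ref{Gv-vsu} apply. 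So either adopt a block-tree search of this kind to \emph{locate} a cut vertex at which the recursion is known to run, or explicitly invoke and verify the hypotheses of the independence result from \cite{acc}; as written, the central step of your proof is asserted rather than proved.
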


\begin{proof}
If $G$ has no cut vertex, then the result follows trivially.
So assume $G$ has some cut vertices. First we will prove the following claim:\\
\textbf{Claim:} There exist two blocks $B_{p}$ and $B_{q}$ of $G$ such that $V(B_{p})\cap V(B_{q})=\{v\}$ and the binomial edge ideals of $\overline{B_{i}}\setminus\{v\}, (\overline{B_{i}})_{v}$, and $(\overline{B_{i}})_{v}\setminus\{v\}$ are strongly unmixed, where $i\in\{p,q\}$.\\
\textit{Proof of claim:} Since $J_{G}$ is unmixed, by \cite[Proposition 1.3]{cactus}, $\mathscr{B}(G)$ is a tree and this implies each cut vertex of $G$ lies exactly in two blocks of $G$. Take a block $B_{1}$ which contains only one cut vertex $v_{1}$ of $G$. Then $J_{\overline{B_{1}}}$ is strongly unmixed gives $J_{\overline{B_{1}}\setminus\{v_{1}\}}, J_{(\overline{B_{1}})_{v_{1}}}$, and $J_{(\overline{B_{1}})_{v_{1}}\setminus\{v_{1}\}}$ are strongly unmixed. Let $B_{2}$ be the other block of $G$ containing $v_{1}$. Now, $J_{\overline{B_{2}}}$ is strongly unmixed and so, there exists a cut vertex $v_{2}$ of $\overline{B_{2}}$ for which $J_{\overline{B_{2}}\setminus\{v_{2}\}}, J_{(\overline{B_{2}})_{v_{2}}}$, and $J_{(\overline{B_{2}})_{v_{2}}\setminus\{v_{2}\}}$ are strongly unmixed. If $v_{1}=v_{2}$, then we are done. Otherwise, consider the other block $B_{3}$ which contains the cut vertex $v_{2}$. Continuing this process, if we do not get a cut vertex $v_{j}$ for which the binomial edge ideals of $\overline{B_{j}}\setminus\{v_{j}\}, (\overline{B_{j}})_{v_{j}}, (\overline{B_{j}})_{v_{j}}\setminus\{v_{j}\}$ and $\overline{B_{j+1}}\setminus\{v_{j}\}, (\overline{B_{j+1}})_{v_{j}}, (\overline{B_{j+1}})_{v_{j}}\setminus\{v_{j}\}$ are strongly unmixed, then after finite number of step we will reach to a block $B_{k}$ which has only one cut vertex. Therefore, by the given conditions, the binomial edge ideals of $\overline{B_{k}}\setminus\{v_{k-1}\}, (\overline{B_{k}})_{v_{k-1}}, (\overline{B_{k}})_{v_{k-1}}\setminus\{v_{k-1}\}$ are strongly unmixed and also, the binomial edge ideals of $\overline{B_{k-1}}\setminus\{v_{k-1}\}, (\overline{B_{k-1}})_{v_{k-1}}, (\overline{B_{k-1}})_{v_{k-1}}\setminus\{v_{k-1}\}$ are strongly unmixed. Thus, in $G$ we will get two blocks $B_{p}$ and $B_{q}$ such that $V(B_{p})\cap V(B_{q})=\{v\}$ and the binomial edge ideals of $\overline{B_{i}}\setminus\{v\}, (\overline{B_{i}})_{v}$, and $(\overline{B_{i}})_{v}\setminus\{v\}$ are strongly unmixed, where $i\in\{p,q\}$.\par 
Due to Remark \ref{remglu}, we may assume $G$ is indecomposable. We proceed by induction on the number of vertices $n$ of $G$. Let $G\setminus\{v\}=H_{1}\sqcup H_{2}$, where $V(B_{p}\setminus\{v\})\subseteq V(H_{1})$ and $V(B_{q}\setminus\{v\})\subseteq V(H_{2})$. Let $B$ be a block of $H_{1}$. Suppose $B_{p}$ is $K_{2}$, then we can write $G=G_{1}\cup B_{p}\cup G_{2}$ such that $V(G_{1})\cap V(B_{p})=\{v\}$ and $V(G_{2})\cap V(B_{p})=\{w\}$. Since $G$ is indecomposable, $v$ and $w$ can not be free vertices in $G_{1}$ and $G_{2}$, respectively. So, there exists $T_{i}\in \mathfrak{C}(G_{i})$ for $i=1,2$ such that $v\in T_{1}$ and $w\in T_{2}$. Note that $T_{1},T_{2},T_{1}\cup T_{2}\in \mathfrak{C}(G)$. Since $J_{G}$ is unmixed, $c_{G_{i}}(T_{i})=\vert T_{i}\vert$ for $i=1,2$. Therefore, $c_{G}(T_{1}\cup T_{2})=\vert T_{1}\cup T_{2}\vert$ which gives a contradiction as $J_{G}$ is unmixed. Hence $B_{p}$ can not be $K_{2}$ and similarly, $B_{q}$ can not be $K_{2}$. In this situation, if $B$ is also a block of $G$, then $\overline{B}$ with respect to $H_{1}$ and $\overline{B}$ with respect to $G$ are same. So, $J_{\overline{B}}$ is strongly unmixed. If $B$ is not a block of $G$, then $V(B)\subseteq V(B_{p}\setminus\{v\})$ and in this case, $\overline{B}$ with respect to $H_{1}$ is equal to $\overline{B}$ with respect to $\overline{B_{p}}\setminus\{v\}$. Since $J_{\overline{B_{p}}\setminus\{v\}}$ is strongly unmixed, by Theorem \ref{blocksu}, $J_{\overline{B}}$ is strongly unmixed. Also, $J_{H_{1}}$ is unmixed by Proposition \ref{accpropunm}. Therefore, by induction hypothesis, $J_{H_{1}}$ is strongly unmixed. Similarly, $J_{H_{2}}$ is strongly unmixed and so is $J_{G\setminus\{v\}}$. For a fix $n$, we will use induction on the number of blocks $k$ of $G$. If $k=1$, then by the given condition $G$ is complete and so $J_{G}$ is strongly unmixed. Now, consider $G_{v}$ and $G_{v}\setminus\{v\}$. Then for any block $B$ of $G_{v}$ and $G_{v}\setminus\{v\}$ such that $V(B_{p}\cup B_{q})\setminus\{v\}\not\subseteq B$, we have $\overline{B}$ with respect to $G_{v}$ or $G_{v}\setminus\{v\}$ is same as $\overline{B}$ with respect to $G$. Thus, $J_{\overline{B}}$ is strongly unmixed. Let $D=B_{p}\cup B_{q}$. Then $D_{v}$ is the block of $G_{v}$ containing $V(B_{p}\cup B_{q})$ and $D_{v}\setminus\{v\}$ is the block of $G_{v}\setminus\{v\}$ containing $V(B_{p}\cup B_{q})\setminus\{v\}$. Since the binomial edge ideals of $(\overline{B_{p}})_{v}\setminus\{v\}$ and $(\overline{B_{q}})_{v}\setminus\{v\}$ are strongly unmixed, by Proposition \ref{Gvsu}, $J_{\overline{D_{v}}}$ is strongly unmixed and by Lemma \ref{Gv-vsu}, $J_{\overline{D_{v}}\setminus\{v\}}$ is strongly unmixed. Number of blocks in $G_{v}$ and $G_{v}\setminus\{v\}$ is $k-1$. Thus, by induction hypothesis, we have $J_{G_{v}}$ and $J_{G_{v}\setminus\{v\}}$ are strongly unmixed and hence, $J_{G}$ is strongly unmixed. 
\end{proof}

In \cite[Problem 7.2]{acc}, the authors proposed the following question:

\begin{question}\label{quesidentify}{\rm
Let $G$ and $H$ be two disjoint connected graphs such that $J_{G}$ and $J_{H}$ are unmixed. Let $v,w$ be the cut vertices of $G, H$, respectively, for which $J_{G\setminus\{v\}}$ and $J_{H\setminus\{w\}}$ are unmixed. Set $G\setminus\{v\} = G_{1}\sqcup G_{2}, H\setminus\{w\} = H_{1}\sqcup H_{2}$. Let $F_{ij}$ be the graph obtained by gluing $G[V(G_{i})\cup \{v\}]$ and $H[V(H_{j})\cup \{w\}]$ identifying $v$ and $w$, where $i,j = 1,2$. If $J_{G}$ and $J_{H}$ are Cohen-Macaulay, is it true that $J_{F_{ij}}$ is Cohen-Macaulay$?$ If $G$ and $H$ are accessible, is it
true that $F_{ij}$ is accessible$?$
}
\end{question}

Using some previous results of this section, we manage to give a partial answer to Question \ref{quesidentify} through the following Theorem \ref{identifythm}.

\begin{theorem}\label{identifythm}
Let $G=G_{1}\cup G_{2}$ with $V(G_{1})\cap V(G_{2})=\{v\}$ and $H=H_{1}\cup H_{2}$ with $V(H_{1})\cap V(H_{2})=\{w\}$ be two distinct connected graphs such that $J_{G},J_{H}, J_{G\setminus\{v\}},J_{H\setminus\{w\}}$ are unmixed. Consider the graph $F_{ij}=G_{i}\cup H_{j}$ identifying the vertices $v$ and $w$, labeling as $v$ i.e., $V(G_{i})\cap V(H_{j})=\{v\}$, for $i,j\in\{1,2\}$. Then the following hold for each $i,j\in\{1,2\}$.
\begin{enumerate}[(i)]
\item If $G$ and $H$ are accessible, then $F_{ij}$ is accessible.
\item If $J_{G}$ and $J_{H}$ are strongly unmixed, then $J_{F_{ij}}$ is strongly unmixed.
\end{enumerate}
\end{theorem}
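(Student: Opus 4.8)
The plan is to reduce the statement about $F_{ij}$ to facts we have already established about blocks-with-whiskers by means of the auxiliary ``whiskered'' graphs $\overline{G_i}$ and $\overline{H_j}$ obtained by attaching a single whisker $\{v,f_v\}$ to $G_i$ at $v$ (resp.\ to $H_j$ at $w$). The key observation is that $F_{ij}$ is obtained by gluing $G_i$ and $H_j$ along the free-ish vertex $v$, so its behaviour should be governed by the corresponding properties of $G_i$ and $H_j$ \emph{seen through a whisker at $v$}, which is exactly what $\overline{G_i}$ and $\overline{H_j}$ encode. First I would record that, since $J_G, J_{G\setminus\{v\}}$ are unmixed, Lemma~\ref{barunm} gives that $J_{\overline{G_1}}$ and $J_{\overline{G_2}}$ are unmixed, and similarly for the $\overline{H_j}$; and by Corollary~\ref{unmixed} together with Proposition~\ref{accpropunm} one checks that $J_{F_{ij}}$ is unmixed (the numerics $c_{G_i}(S_i)=|S_i|+1$ when $v\notin S_i$ and $c_{G_i}(S_i)=|S_i|$ when $v\in S_i$ follow from unmixedness of $J_G$ and $J_{G\setminus\{v\}}$ via the case analysis in Proposition~\ref{cutset}). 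This unmixedness bookkeeping is routine but needs to be done carefully before anything else.

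For part (i): assuming $G$ and $H$ are accessible, Theorem~\ref{baracc} (applied with $B$ a block of $G$ containing $v$, or more directly the whiskering construction) shows $\overline{G_i}$ is accessible, and likewise $\overline{H_j}$. Now $F_{ij} = G_i\cup H_j$ with $V(G_i)\cap V(H_j)=\{v\}$, and if we attach a whisker to $F_{ij}$ at $v$ we get a graph whose two ``halves'' are precisely $\overline{G_i}$ and $\overline{H_j}$ glued at the cut vertex $v$ whose private whisker has been doubled; more to the point, $\overline{F_{ij}}$ (whiskering $F_{ij}$ at $v$) has $\overline{G_i}$ and $\overline{H_j}$ as the two pieces meeting at $v$. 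Then Lemma~\ref{Gbaracc}, with $G_1\rightsquigarrow G_i$, $G_2\rightsquigarrow H_j$, and $\overline{G_1},\overline{G_2}\rightsquigarrow \overline{G_i},\overline{H_j}$, yields that $F_{ij}$ is accessible, once we know $J_{F_{ij}}$ is unmixed. So part (i) is essentially a direct application of Lemmas~\ref{barunm} and~\ref{Gbaracc}.

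For part (ii): assuming $J_G$ and $J_H$ are strongly unmixed, Theorem~\ref{blocksu} gives that $J_{\overline{G_i}}$ and $J_{\overline{H_j}}$ are strongly unmixed (again taking a block of $G$ through $v$ and using the $\overline{B}^W$ flexibility, with $W$ chosen to whisker exactly at $v$). Then I would invoke the gluing direction Proposition~\ref{Gvsu} (strong unmixedness of the two pieces at a free vertex implies strong unmixedness of the union) \emph{after} reducing to a free vertex: the point is that $F_{ij}$ itself is not a decomposition at a free vertex, but $(F_{ij})_v = (\overline{G_i})_v\cup(\overline{H_j})_v$-type identities (as in Lemma~\ref{Gvunmacc}, Propositions~\ref{Gvvsu}, \ref{Gvsu}) together with Lemma~\ref{Gv-vsu} let one run the strong-unmixedness recursion: pick the cut vertex $v$ of $F_{ij}$, and check $J_{F_{ij}\setminus\{v\}}$, $J_{(F_{ij})_v}$, $J_{(F_{ij})_v\setminus\{v\}}$ are strongly unmixed. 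Here $F_{ij}\setminus\{v\} = (G_i\setminus\{v\})\sqcup(H_j\setminus\{v\})$ is a disjoint union whose components are strongly unmixed because $G_i\setminus\{v\}$ is a disjoint union of components of $G\setminus\{v\}$ hence strongly unmixed by Theorem~\ref{blocksu}/\ref{blockthm}; and $(F_{ij})_v$ decomposes at the now-free vertex $v$ into $(G_i)_v$ and $(H_j)_v$, which are strongly unmixed by Proposition~\ref{Gvvsu}, so Proposition~\ref{Gvsu} finishes $(F_{ij})_v$, while $(F_{ij})_v\setminus\{v\}$ is handled by Lemma~\ref{Gv-vsu}. The main obstacle I anticipate is exactly the reconciliation between the whisker model ($\overline{G_i}$) and the $(-)_v$-model ($(G_i)_v$): one must be careful that $G_i\setminus\{v\}$, as a disjoint union of components of $G\setminus\{v\}$, genuinely inherits strong unmixedness (this uses that strong unmixedness passes to connected components and that the blocks of $G_i\setminus\{v\}$ are blocks of $G$), and that the various identifications $(F_{ij})_v\setminus\{v\}\cong(\text{something built from }(G_i)_v\setminus\{v\},(H_j)_v\setminus\{v\})$ hold on the nose — these are the places where a slip would break the induction.
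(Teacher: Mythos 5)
Your unmixedness bookkeeping for $J_{F_{ij}}$ matches the paper's (the case analysis from Proposition \ref{cutset} feeding Corollary \ref{unmixed}), and part (i) is essentially sound: routing it through $\overline{G_i},\overline{H_j}$ and Lemma \ref{Gbaracc} works, although $G_i$ need not be a single block, so ``$\overline{G_i}$ is accessible'' is not literally an instance of Theorem \ref{baracc}; one gets it instead from Lemma \ref{barunm} together with Theorem \ref{baraccG}, since every block of $\overline{G_i}$ is a block of $G$ or the whisker edge $\{v,f_v\}$, with the same attached whiskers.

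Part (ii), however, has a genuine gap. Your recursion at $v$ needs $J_{G_i\setminus\{v\}}$ and $J_{(G_i)_v}$ to be strongly unmixed, and neither is justified as written. First, the supporting claim that ``the blocks of $G_i\setminus\{v\}$ are blocks of $G$'' is false: deleting a cut vertex can shatter a $2$-connected block through $v$ into several smaller blocks (a $4$-cycle through $v$ becomes a path, i.e.\ two copies of $K_2$), so Theorem \ref{barsuG} cannot be applied to $G\setminus\{v\}$ without first controlling these new blocks and their whiskerings --- this is exactly the delicacy that the proof of Theorem \ref{barsuG} spends its ``Claim'' and subsequent case analysis on. Second, to feed Proposition \ref{Gvvsu} you need $J_{G_v}$ strongly unmixed, but Definition \ref{defsu} only guarantees \emph{some} cut vertex $u$ with $J_{G\setminus\{u\}}$, $J_{G_u}$, $J_{G_u\setminus\{u\}}$ strongly unmixed, not your chosen $v$; passing from ``some $u$'' to ``this $v$'' requires the exchange statement \cite[Proposition 5.13]{acc} (applicable here precisely because $J_{G\setminus\{v\}}$ is assumed unmixed), which you never invoke. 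The paper sidesteps all of this: since gluing two graphs at a single vertex neither merges nor splits blocks, every block of $F_{ij}$ is a block of $G$ or of $H$, and $\overline{B}$ with respect to $F_{ij}$ coincides with $\overline{B}$ with respect to $G$ or $H$; Theorem \ref{baracc} and Theorem \ref{blocksu} make each such $\overline{B}$ accessible, resp.\ $J_{\overline{B}}$ strongly unmixed, and Theorems \ref{baraccG} and \ref{barsuG} applied to $F_{ij}$ (whose unmixedness you already established) finish both parts at once, with no need to analyze $G\setminus\{v\}$ or $G_v$ at all.
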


\begin{proof}
Choose an $F_{ij}$ for $i,j\in\{1,2\}$. Since $J_{G\setminus\{v\}}$ and $J_{H\setminus\{v\}}$ are unmixed, $J_{F_{ij}\setminus\{v\}}$ is unmixed. Let $S_{i}\in \mathfrak{C}(G_{i})$ with $v\not\in S_{i}$. Then $S_{i}\in \mathfrak{C}(G)$ and $c_{G}(S_{i})=c_{G_{i}}(S_{i})=\vert S_{i}\vert +1$ as $J_{G}$ is unmixed. Suppose $S_{i}\in \mathfrak{C}(G_{i})$ with $v\in S_{i}$. Then $S_{i}\in\mathfrak{C}(G)$ and $c_{G}(S_{i})=\vert S_{i}\vert +1$ as $J_{G}$ is unmixed. In this case, one connected component of $G\setminus S_{i}$ is $G_{k}\setminus\{v\}$, where $i\neq k\in\{1,2\}$. Therefore, $c_{G_{i}}(S_{i})=\vert S_{i}\vert$. Thus, $G_{i}$ satisfy the condition (ii) of Corollary \ref{unmixed}. Similarly, $F_{j}$ also satisfy the condition (ii) of Corollary \ref{unmixed}. Hence by Corollary \ref{unmixed}, $J_{F_{ij}}$ is unmixed. Since, $G$ and $H$ are accessible, by Theorem \ref{baracc} and Theorem \ref{blocksu}, for each block $B$ of $G$, $\overline{B}$ with respect to $G$ is accessible and strongly unmixed. Same holds for $H$. Note that for any block $B$ of $F_{ij}$, $\overline{B}$ with respect to $F_{ij}$ is either $\overline{B}$ with respect to $G$ or $\overline{B}$ with respect to $H$. Therefore, by Theorem \ref{baraccG}, $F_{ij}$ is accessible and by Theorem \ref{barsuG}, $J_{F_{ij}}$ is strongly unmixed.
\end{proof}

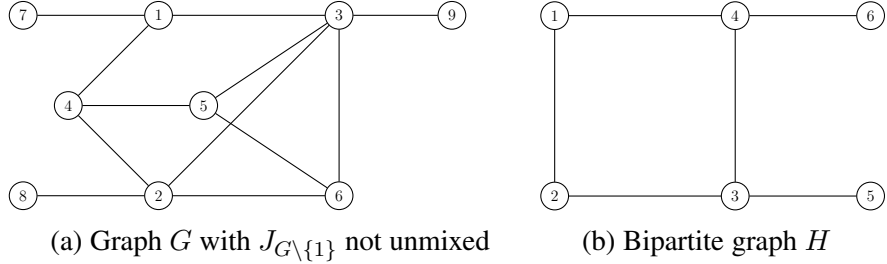
\begin{figure}[H]
	\centering
	\begin{subfigure}{0.55\textwidth}
	
	\begin{tikzpicture}
  [scale=.6,auto=left,every node/.style={circle,scale=0.5}]
 
  \node[draw] (n1) at (0,4)  {$1$};
  \node[draw] (n2) at (0,0)  {$2$};
  \node[draw] (n3) at (4,4) {$3$};
   \node[draw] (n4) at (-2,2) {$4$};
   \node[draw] (n5) at (1,2) {$5$};
  \node[draw] (n6) at (4,0) {$6$};
  \node[draw] (n7) at (-3,4) {$7$};
  \node[draw] (n8) at (-3,0) {$8$};
  \node[draw] (n9) at (6.5,4) {$9$};
 
  \foreach \from/\to in {n1/n3,n1/n4,n1/n7, n2/n3, n2/n4, n2/n6, n2/n8, n3/n5, n3/n6, n3/n9, n4/n5, n5/n6}
    \draw[] (\from) -- (\to);
    
\end{tikzpicture}
\caption{Graph $G$ with $J_{G\setminus\{1\}}$ not unmixed}\label{figsu1}
	\end{subfigure}
	\begin{subfigure}{0.35\textwidth}
	
	\begin{tikzpicture}
  [scale=.6,auto=left,every node/.style={circle,scale=0.5}]
    
  \node[draw] (n1) at (0,4)  {$1$};
  \node[draw] (n2) at (0,0)  {$2$};
  \node[draw] (n3) at (4,0) {$3$};
   \node[draw] (n4) at (4,4) {$4$};
   \node[draw] (n5) at (7,0) {$5$};
  \node[draw] (n6) at (7,4) {$6$};

  \foreach \from/\to in {n1/n2,n1/n4, n2/n3, n3/n4, n3/n5, n4/n6}
    \draw[] (\from) -- (\to);
   
\end{tikzpicture}
\caption{Bipartite graph $H$}\label{figsu2}
	\end{subfigure}
	
	\caption{$J_{G}$ and $J_{H}$ are Cohen-Macaulay.}\label{figsu}
\end{figure}
\medskip

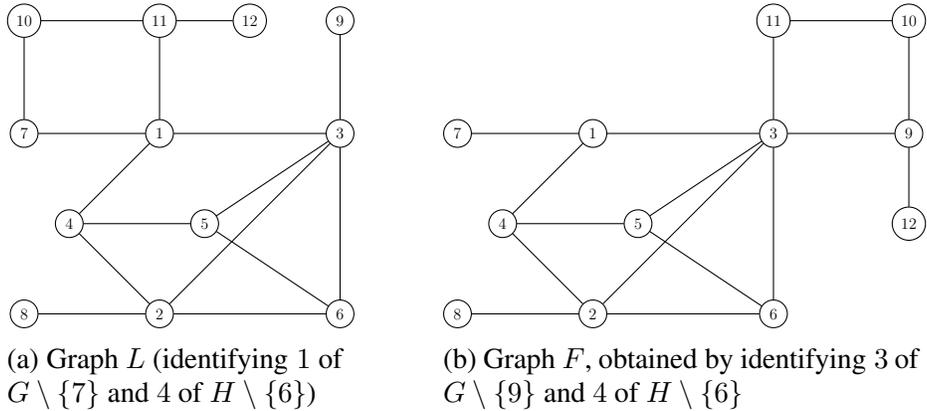
\begin{figure}[H]
	\centering
	\begin{subfigure}{0.45\textwidth}
	\begin{tikzpicture}
  [scale=.6,auto=left,every node/.style={circle,scale=0.5}]
    
  \node[draw] (n1) at (0,4)  {$1$};
  \node[draw] (n2) at (0,0)  {$2$};
  \node[draw] (n3) at (4,4) {$3$};
   \node[draw] (n4) at (-2,2) {$4$};
   \node[draw] (n5) at (1,2) {$5$};
  \node[draw] (n6) at (4,0) {$6$};
  \node[draw] (n7) at (-3,4) {$7$};
  \node[draw] (n8) at (-3,0) {$8$};
  \node[draw] (n9) at (4,6.5) {$9$};
  \node[draw] (n10) at (-3,6.5) {$10$};
  \node[draw] (n11) at (0,6.5) {$11$};
  \node[draw] (n12) at (2,6.5) {$12$};

  \foreach \from/\to in {n1/n7,n1/n3, n1/n4, n2/n3, n2/n4, n2/n6, n2/n8, n3/n5, n3/n6, n3/n9, n4/n5, n5/n6, n7/n10, n10/n11, n1/n11, n11/n12}
    \draw[] (\from) -- (\to);
   
\end{tikzpicture}
\caption{Graph $L$ (identifying $1$ of\\
$G\setminus\{7\}$ and $4$ of $H\setminus\{6\}$)}\label{fignonsuglu}
	\end{subfigure}
\begin{subfigure}{0.5\textwidth}
	\begin{tikzpicture}
  [scale=.6,auto=left,every node/.style={circle,scale=0.5}]
    
  \node[draw] (n1) at (0,4)  {$1$};
  \node[draw] (n2) at (0,0)  {$2$};
  \node[draw] (n3) at (4,4) {$3$};
   \node[draw] (n4) at (-2,2) {$4$};
   \node[draw] (n5) at (1,2) {$5$};
  \node[draw] (n6) at (4,0) {$6$};
  \node[draw] (n7) at (-3,4) {$7$};
  \node[draw] (n8) at (-3,0) {$8$};
  \node[draw] (n9) at (7,4) {$9$};
  \node[draw] (n10) at (7,6.5) {$10$};
  \node[draw] (n11) at (4,6.5) {$11$};
  \node[draw] (n12) at (7,2) {$12$};

  \foreach \from/\to in {n1/n7,n1/n3, n1/n4, n2/n3, n2/n4, n2/n6, n2/n8, n3/n5, n3/n6, n3/n9, n4/n5, n5/n6, n9/n10, n10/n11, n3/n11, n9/n12}
    \draw[] (\from) -- (\to);
   
\end{tikzpicture}
\caption{Graph $F$, obtained by identifying $3$ of $G\setminus\{9\}$ and $4$ of $H\setminus\{6\}$}\label{figsuglu}
	\end{subfigure}

	\caption{$J_{F}$ is Cohen-Macaulay but $J_{L}$ is not.}\label{figglu}
\end{figure}

\begin{example}{\rm
Consider the graphs in Figure \ref{figsu} and Figure \ref{figglu}. Binomial edge ideal of the graph $G$ in \Cref{figsu1} is strongly unmixed (proved in \cite[Example 6.11]{acc}). The graph $H$ in \Cref{figsu2} is bipartite accessible graph and so, $J_{H}$ is strongly unmixed by \cite[Corollary 6.9]{acc}. In \Cref{fignonsuglu}, $L$ is the graph obtained by attaching $G\setminus\{7\}$ and $H\setminus\{6\}$ identifying the vertex $1$ of $G\setminus\{7\}$ and the vertex $4$ of $H\setminus\{6\}$ with some relabeling of vertices. Similarly, in \Cref{figsuglu}, $F$ is the graph obtained by gluing $G\setminus\{9\}$ and $H\setminus\{6\}$ at the vertices $3$ of $G\setminus\{9\}$ and the vertex $4$ of $H\setminus\{6\}$ with some relabeling of vertices. \par 
Note that $G\setminus\{1,7\}$ is not unmixed as $J_{G}$ is unmixed and $\{3,4,6\}\in \mathfrak{C}(G\setminus\{1\})$. Therefore, by  Proposition \ref{cutset}, $\{3,4,6,1,10\}\in \mathfrak{C}(L)$ and $c_{L}(\{3,4,6,1,10\})=5\neq 5+1$. Thus, $J_{L}$ is not unmixed and so $J_{L}$ can not be Cohen-Macaulay. On the other hand, $G\setminus\{3,9\}$ and $H\setminus\{4,6\}$ is unmixed by Proposition \ref{accpropunm}. Therefore, by Corollary \ref{unmixed}, $J_{F}$ is unmixed and hence, by Theorem \ref{barsuG}, $J_{F}$ is strongly unmixed as well as Cohen-Macaulay.
}
\end{example}

\section{Cohen-Macaulay Binomial Edge Ideals of $r$-Regular $r$-Connected Blocks with Whiskers }\label{secstarsu}
\medskip

In this part, we define a new class of indecomposable graphs by attaching two complete graphs in a particular manner and adding some whiskers. We show the Conjecture \ref{accsuconj} holds for these graphs and classify all $r$-regular $r$-connected graphs with whiskers whose binomial edge ideals are Cohen-Macaulay.
\medskip

\noindent\textbf{Construction:} Take two complete graphs $K_{m}$ and $K_{n}$ with $V(K_{m})=\{x_{1},\ldots, x_{m}\}$ and $V(K_{n})=\{y_{1},\ldots, y_{n}\}$. Let $r\leq m$ and $r\leq n$ be a positive integer. The \textit{star product} of $K_{m}$ and $K_{n}$ with respect to $r$, denoted by $K_{m}\star_{r} K_{n}$, is defined in such a way that
\begin{enumerate}
\item[$\bullet$] $V(K_{m}\star_{r} K_{n})=V(K_{m})\sqcup V(K_{n}).$
\item[$\bullet$] $E(K_{m}\star_{r} K_{n})=E(K_{m})\cup E(K_{n})\cup\{\{x_{i},y_{i}\}\mid 1\leq i\leq r\}$.
\end{enumerate}

Note that $K_{m}\star_{r} K_{n}=K_{n}\star_{r} K_{m}$ i.e., the star product is commutative. For $m=n=r$, we write $K_{r}\star K_{r}$ instead of $K_{r}\star_{r} K_{r}$.
\medskip

Now we will consider the graph $\overline{K_{m}\star_{r} K_{n}}$, by  adding some whiskers to it in a special manner such that

\begin{enumerate}
\item[$\bullet$] $V(\overline{K_{m}\star_{r} K_{n}})=V(K_{m}\star_{r} K_{n})\cup \{f_{x_{i}},f_{y_{i}}\mid 2\leq i\leq r\};$
\item[$\bullet$] $E(\overline{K_{m}\star_{r} K_{n}})=E(K_{m}\star_{r} K_{n})\cup\{\{x_{i},f_{x_{i}}\}, \{y_{i},f_{y_{i}}\}\mid 2\leq i\leq r\}$.
\end{enumerate}
\medskip

\begin{lemma}\label{starunm}
Let $G=K_{m}\star_{r} K_{n}$. Then $J_{\overline{G}}$ is unmixed.
\end{lemma}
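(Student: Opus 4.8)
The plan is to prove that $J_{\overline{G}}$ is unmixed by verifying the numerical criterion recalled in the preliminaries: since $\emptyset \in \mathfrak{C}(\overline{G})$ and $\overline{G}$ is connected, $J_{\overline{G}}$ is unmixed if and only if $c_{\overline{G}}(T) = |T| + 1$ for every $T \in \mathfrak{C}(\overline{G})$. First I would pin down the structure of $\overline{G}$: the vertices $x_1$ and $y_1$ are the only vertices of $G = K_m \star_r K_n$ incident to the cross-edges that are not given whiskers, while each $x_i, y_i$ for $2 \le i \le r$ carries a pendant vertex $f_{x_i}$ (resp. $f_{y_i}$), and the vertices $x_{r+1}, \dots, x_m$ and $y_{r+1}, \dots, y_n$ are free vertices of the complete parts not touched by any cross-edge. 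The whisker vertices $f_{x_i}, f_{y_i}$ are free vertices of $\overline{G}$, hence by \cite[Proposition 2.1]{raufrin} they never lie in any cutset; similarly $x_{r+1},\dots,x_m,y_{r+1},\dots,y_n$ are free and excluded from every cutset.

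Next I would describe $\mathfrak{C}(\overline{G})$ explicitly. A candidate cutset $T$ can only involve vertices among $\{x_1,\dots,x_r,y_1,\dots,y_r\}$. The key observation is that removing such a $T$ disconnects $\overline{G}$ into: (a) a component containing the ``core'' of $K_m$ (the free vertices $x_{r+1},\dots,x_m$ together with any $x_i \notin T$) and, if $x_1 \notin T$, the $y$-side core as well when some cross-edge survives; (b) isolated edges $\{x_i, f_{x_i}\}$ for each $x_i \in T$ with $2 \le i \le r$, and similarly $\{y_i, f_{y_i}\}$ for $y_i \in T$; and, crucially, the two complete parts get separated from each other precisely when all surviving cross-edges are destroyed. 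I would set up the counting: if $T$ contains $a$ of the vertices $x_2,\dots,x_r$ and $b$ of $y_2,\dots,y_r$, plus possibly $x_1$ and/or $y_1$, each whiskered vertex placed in $T$ contributes exactly one new component (its pendant edge $\{x_i,f_{x_i}\}$ or $\{y_i,f_{y_i}\}$), so these account for $a+b$ extra components beyond the single ``big'' component; and the cut point property forces $T$ to be exactly such a set with the constraint that $x_1 \in T$ or $y_1 \in T$ can only happen when this genuinely separates the two sides, which adds exactly one more component. Carefully bookkeeping the cases ($x_1, y_1 \notin T$; exactly one of them in $T$; both in $T$) should yield $c_{\overline{G}}(T) = |T| + 1$ in every case, and simultaneously confirm that such $T$ really is a cutset (the cut point property holds because each $x_i \in T$ genuinely separates $f_{x_i}$, and $x_1$ or $y_1$, when in $T$, separates the two complete parts provided enough cross-edges are already cut).

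The main obstacle I anticipate is the case analysis around the vertices $x_1$ and $y_1$, which behave differently from the whiskered cut vertices: whether $x_1$ (or $y_1$) can belong to a cutset depends on how many cross-edges have already been removed via the other $x_i$'s in $T$, so the constraint ``$T$ is a cutset'' couples the $x$-side and $y$-side choices. In particular I would need to check that if $T$ contains all of $x_2,\dots,x_r$ and $x_1$ but no $y_i$, then the cut point property may fail for $x_1$ (since $x_1$ is then the only surviving cross-edge endpoint and its removal must still disconnect something) — tracking exactly when $T \cup \{x_1\}$ or $T \cup \{y_1\}$ stays in $\mathfrak{C}(\overline{G})$ is the delicate point. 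Once the admissible cutsets are correctly enumerated, the equality $c_{\overline{G}}(T) = |T|+1$ should follow by a direct component count, since every vertex we add to a cutset either peels off one pendant edge or (in the single instance of fully separating the two $K$'s) adds one component, never more. I would organize the writeup by first handling $T$ with $x_1,y_1 \notin T$, then the asymmetric cases, then $x_1,y_1 \in T$, invoking the freeness of whisker vertices throughout to restrict $T$.
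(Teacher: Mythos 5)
Your plan is correct, but it takes a genuinely different route from the paper. The paper proves the lemma by induction on $r$ for the core case $\overline{K_r\star K_r}$: it deletes the vertex $x_r$, recognizes the resulting graph as a decomposable graph built from $\overline{K_{r-1}\star K_{r-1}}$ and a $K_2$, and then assembles unmixedness back up using the gluing criterion of Corollary \ref{unmixed} and the free-vertex lemma (Lemma \ref{freunm}); the passage from $K_r\star K_r$ to general $K_m\star_r K_n$ is again done by peeling off the free vertices $x_{r+1},\dots,x_m,y_{r+1},\dots,y_n$ one at a time. You instead enumerate $\mathfrak{C}(\overline{G})$ directly and verify $c_{\overline{G}}(T)=|T|+1$ by hand, which is more elementary and self-contained (and treats all $m,n\ge r$ uniformly rather than splitting off the cases $m=r+1$, $m>r+1$ as the paper does), at the cost of a case analysis that the paper's inductive machinery hides. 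Your enumeration does check out: cutsets live inside $\{x_1,\dots,x_r,y_1,\dots,y_r\}$; a set $T$ avoiding $x_1,y_1$ is always a cutset with $c(T)=|T|+1$ (each $x_i\in T$, $i\ge 2$, isolates the vertex $f_{x_i}$ --- note it is an isolated vertex, not the edge $\{x_i,f_{x_i}\}$ as you wrote, though the count is unaffected); $T$ containing exactly one of $x_1,y_1$ is a cutset precisely when every surviving cross-edge index $i\ge 2$ has $x_i\in T$ or $y_i\in T$ and the relevant side is nonempty after deletion, in which case the separation of the two cliques contributes exactly the one extra component needed; and a $T$ containing both $x_1$ and $y_1$ is never a cutset, since the remaining neighbours of $x_1$ then form a clique --- this is the resolution of the ``delicate point'' you flag, and it is worth stating explicitly in a final writeup that this case is vacuous rather than another instance of the equality. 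Each approach buys something: yours gives an explicit description of $\mathfrak{C}(\overline{K_m\star_r K_n})$ that is reused implicitly in the accessibility argument (Lemma \ref{staracc}), while the paper's recycles the block-gluing lemmas it has already built and keeps the induction aligned with the later strong-unmixedness proof (Theorem \ref{starsu}).
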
  

\begin{proof}
We will first prove $G_{r}=\overline{K_{r}\star K_{r}}$ is unmixed proceeding by induction on $r$.\par
For $r=1$, $G_{1}$ is $K_{2}$ and $\phi$ being the only cutset of $G_{1}$, $J_{G_{1}}$ is unmixed. When $r=2$, we can see that $J_{G_{2}}$ is unmixed by \cite[Lemma 2.9]{cactus}. Now consider the graph $G_{r}\setminus \{x_{r}\}=\{f_{x_{r}}\}\sqcup H$. Clearly, $H$ is a decomposable graph such that $H= H_{1}\cup H_{2}$ with $H_{1}=H[\{y_{r},f_{y_{r}}\}]$ and $H_{2}=H\setminus \{f_{y_{r}}\}$. Notice that $G_{r-1}=H_{2}\setminus \{y_{r}\}$. Since $y_{r}$ is a free vertex in $H_{2}$, $y_{r}\not\in S$ for all $S\in \mathfrak{C}(H_{2})$. Also, $\mathcal{N}_{H_{2}}(y_{r})\not \subseteq T$ for any $T\in \mathfrak{C}(G_{r-1})$. By induction hypothesis, $J_{G_{r-1}}$ is unmixed and therefore, by Corollary \ref{unmixed} and \cite[Lemma 2.2]{raufrin}, we have $J_{H}$ is unmixed. Again, $G_{r}\setminus \{f_{x_{r}}\}$ and $G_{r}[\{x_{r},f_{x_{r}}\}]$ satisfy the condition (ii) of Corollary \ref{unmixed}. Hence using Corollary \ref{unmixed}, we get $J_{G_{r}}$ is unmixed as $J_{G_{r}\setminus\{x_{r}\}}$ is unmixed. \par 
If $m=r+1, n=r$ or $m=r, n=r+1$ or $m=r+1, n=r+1$, then we can observe that $c_{\overline{G}}(T)=\vert T\vert+1$ for all $T\in \mathfrak{C}(\overline{G})$ with $\mathcal{N}_{\overline{G}}(x_{r+1})\subseteq T$ or $\mathcal{N}_{\overline{G}}(y_{r+1})\subseteq T$. Therefore, using Lemma \ref{freunm} we get $J_{\overline{G}}$ is unmixed. For $m>r+1$ or $n>r+1$,  $x_{r+1},\ldots,x_{m}$ and $y_{r+1},\ldots, y_{n}$ are free vertex of $\overline{G}$. Note that $\mathcal{N}_{\overline{G}}(x_{i})\not\in \mathfrak{C}(\overline{G})$ for all $r+1\leq i\leq m$ if $m>r+1$ and $\mathcal{N}_{\overline{G}}(y_{i})\not\in \mathfrak{C}(\overline{G})$ for all $r+1\leq i\leq n$ if $n>r+1$ . Therefore, by repeating application of Lemma \ref{freunm}, we get $J_{\overline{G}}$ is unmixed.
\end{proof}

\begin{lemma}\label{staracc}
Let $G=K_{m}\star_{r} K_{n}$. Then $\overline{G}$ is accessible.
\end{lemma}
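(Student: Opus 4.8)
The plan is to show that $\overline{G}$ is accessible by combining the unmixedness already established in Lemma \ref{starunm} with an explicit description of $\mathfrak{C}(\overline{G})$, and then verifying the accessibility of the cut-set system directly. First I would reduce to the core case $\overline{K_r \star K_r}$: since $x_{r+1},\dots,x_m$ and $y_{r+1},\dots,y_n$ are free vertices of $\overline{G}$, none of them lies in any cutset (by \cite[Proposition 2.1]{raufrin}), so $\mathfrak{C}(\overline{G}) = \mathfrak{C}(\overline{K_r\star K_r})$ after identifying the relevant induced subgraph; and $J_{\overline{G}}$ is unmixed by Lemma \ref{starunm}. Thus it suffices to prove $\overline{K_r\star K_r}$ is accessible, and for this I would induct on $r$, with $r=1,2$ handled by direct inspection (for $r=2$ one can cite \cite[Lemma 2.9]{cactus} or just list the few cutsets).

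For the inductive step, the key observation is that the cut vertices of $G_r := \overline{K_r\star K_r}$ are exactly $x_2,\dots,x_r,y_2,\dots,y_r$ (each carries a whisker), while $x_1,y_1$ and all whisker-tips are non-cut vertices; moreover $x_1$ and $y_1$ are \emph{not} cut vertices because of the edge $\{x_1,y_1\}$. I would then take an arbitrary nonempty $T\in\mathfrak{C}(G_r)$ and show $T\setminus\{t\}\in\mathfrak{C}(G_r)$ for some $t\in T$. The cleanest route is to single out a vertex $x_r$ (or $y_r$) and use the decomposition from the proof of Lemma \ref{starunm}: $G_r\setminus\{x_r\} = \{f_{x_r}\}\sqcup H$ where $H$ is decomposable with a $K_2$-piece $H[\{y_r,f_{y_r}\}]$ and $H_2 = H\setminus\{f_{y_r}\}$ satisfying $G_{r-1} = H_2\setminus\{y_r\}$. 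Using Proposition \ref{cutset} (with the splitting at $y_r$, resp.\ the structure at $x_r$) together with Remark \ref{remglu}(ii), accessibility of $G_{r-1}$ lifts to accessibility of $H$, hence of $G_r\setminus\{x_r\}$; then Proposition \ref{freacc} (applied at the free vertex $f_{x_r}$, whose neighbor $\{x_r\}$ is not a cutset since $x_r$ still has large degree) would give accessibility of $G_r$ — except $f_{x_r}$ being free means I should instead argue the other way. So the cleaner tactic: show every nonempty $T\in\mathfrak{C}(G_r)$ either misses all of $\{x_r,y_r,f_{x_r},f_{y_r}\}$, in which case $T$ is a cutset of a smaller graph of the same type and induction applies verbatim after checking $T\setminus\{t\}$ survives in $G_r$; or $T$ meets $\{x_r,y_r\}$, say $x_r\in T$, in which case I exhibit $T\setminus\{x_r\}\in\mathfrak{C}(G_r)$ directly by checking each remaining element of $T$ is still a cut vertex of $G_r\setminus((T\setminus\{x_r\})\setminus\{t\})$ — this is where the symmetric $K_r\star K_r$ structure makes the connectivity bookkeeping routine.

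The main obstacle I expect is the case analysis showing that when $x_r\in T$ (equivalently, when the cutset contains one of the "last" cut vertices), removing $x_r$ keeps $T\setminus\{x_r\}$ a cutset: one must verify that deleting $x_r$ does not disconnect any component in a way that destroys the cut-point property of the other chosen vertices, and here the precise adjacency pattern of the star product — namely that $x_r$ is joined to all of $K_r$ on its side and only to $y_r,f_{x_r}$ outside — has to be used carefully. A secondary subtlety is handling cutsets $T$ with $T\cap\{x_2,\dots,x_r\} = \{x_2,\dots,x_r\}$ (or the $y$-analogue), i.e.\ when a whole "layer" of cut vertices is taken; one checks that even then some individual $x_i$ can be dropped, because the edge $\{x_1,y_1\}$ prevents the $K_r$-side from being fully separated. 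Once these cases are dispatched, accessibility of $\mathfrak{C}(G_r)$ together with the unmixedness from Lemma \ref{starunm} yields that $\overline{K_r\star K_r}$, and hence $\overline{G}$, is accessible.
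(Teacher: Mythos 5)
Your overall skeleton matches the paper's: reduce to $G_r=\overline{K_r\star K_r}$ by peeling off the free vertices $x_{r+1},\dots,x_m,y_{r+1},\dots,y_n$ (the paper does this at the end via repeated application of Proposition \ref{freacc}), induct on $r$ with the same base cases, and use the decomposition $G_r\setminus\{x_r\}=\{f_{x_r}\}\sqcup H$ with $H=H[\{y_r,f_{y_r}\}]\cup H_2$ and $H_2\setminus\{y_r\}=G_{r-1}$ to get accessibility of $H$. Up to that point you are reproducing the paper's argument. The divergence is in how you pass from $H$ back to $G_r$: the paper applies Proposition \ref{cutset} to the decomposition $G_r=G^1\cup G^2$ with $G^1=G_r[\{x_r,f_{x_r}\}]\cong K_2$ and $G^2=G_r\setminus\{f_{x_r}\}$, so that every cutset of $G_r$ falls into one of the families $\mathcal{A},\mathcal{B},\mathcal{C},\mathcal{D}$, each of which is then checked to be accessible using the accessibility of $H$. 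You instead propose a direct two-case analysis, and that is where your plan breaks.

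The specific claim that fails is: if $T\in\mathfrak{C}(G_r)$ meets $\{x_r,y_r\}$, then deleting that vertex leaves a cutset. Take $r=3$ with the labelling of Figure \ref{figstar}, i.e.\ $x_1=3$, $y_1=6$, $y_2=4$, $y_3=5$. Then $T=\{3,4,5\}$ is a cutset of $\overline{K_3\star K_3}$ (the components of $G\setminus T$ are $\{1,2,7,9\}$, $\{6\}$, $\{8\}$, $\{10\}$, and each of $3,4,5$ is a cut point of the graph obtained by removing the other two). It contains $y_3=5$ and $y_2=4$, yet neither $\{3,4\}$ nor $\{3,5\}$ is a cutset: once $5$ (resp.\ $4$) is restored, the vertex $6$ stays attached to the big component through $5$ (resp.\ $4$), so $3$ is no longer a cut point of $G\setminus\{4\}$ (resp.\ $G\setminus\{5\}$). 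The only removable element of $T$ is the \emph{non-cut} vertex $3=x_1$, giving $\{4,5\}\in\mathfrak{C}(G_r)$. So your recipe picks exactly the wrong element; and you cannot repair it by invoking Proposition \ref{accnoncut} to drop a non-cut vertex, since that proposition presupposes the accessibility you are trying to prove. Your first case ("$T$ misses $x_r,y_r,f_{x_r},f_{y_r}$, hence is a cutset of $G_{r-1}$") also needs justification, since a vertex of $T$ could a priori owe its cut-point property to a component meeting $\{x_r,y_r,f_{x_r},f_{y_r}\}$. The cutset classification of Proposition \ref{cutset} applied at $x_r$, as in the paper, is the mechanism that organizes exactly this bookkeeping; without it, or some substitute for it, the inductive step is not complete.
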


\begin{proof}
We first prove $G_{r}=\overline{K_{r}\star K_{r}}$ is accessible by induction on $r$.\par

For $r=1$, $G_{1}=K_{2}$ and by \cite[Remark 4.2]{acc}, it is accessible. For $r=2$, $J_{G_{2}}$ is Cohen-Macaulay by \cite[Lemma 2.9]{cactus} and hence $G_{2}$ is accessible by \cite[Theorem 3.5]{acc}. Let us consider the graph $G_{r}\setminus \{x_{r}\}=\{f_{x_{r}}\}\sqcup H$. Now $H$ is a decomposable graph with the decomposition $H=H_{1}\cup H_{2}$, where $H_{1}=H[\{y_{r},f_{y_{r}}\}]$ and $H_{2}=H\setminus \{f_{y_{r}}\}$. $y_{r}$ is a free vertex of $H_{2}$ and $H_{2}\setminus \{y_{r}\}=G_{r-1}$, which is accessible by induction hypothesis. Also, unmixedness of $J_{H_{2}}$ is clear from proof of Lemma \ref{starunm}. Therefore, by Lemma \ref{freunm}, $H_{2}$ is accessible. $H_{1}$ being $K_{2}$ is accessible. Thus, by Remark \ref{remglu}, $H$ is accessible. Now $G_{r}=G^{1}\cup G^{2}$, where $V(G^{1})\cap V(G^{2})=\{x_{r}\}$. Then $G^{1}$ is $K_{2}$ with $G^{1}\setminus \{x_{r}\}=\{f_{x_{r}}\}$ and $G^{2}\setminus \{x_{r}\}=H$. Clearly, $\mathfrak{C}(G^{1})=\mathfrak{C}(G^{1}\setminus\{x_{r}\})=\{\phi\}$. Considering $G_{r}=G^{1}\cup G^{2}$, we have $\mathfrak{C}(G_{r})=\mathcal{A}\cup\mathcal{B}\cup\mathcal{C}\cup\mathcal{D}$ as described in Proposition \ref{cutset}. Let $T\in \mathcal{A}$. Then $T=S_{2}$, where $x_{r}\not\in S_{2}\in \mathfrak{C}(G^{2})$. If $x_{i}\in \{x_{1},\ldots,x_{r-1}\}\cap S_{2}$, then $S_{2}\setminus \{x_{i}\}\in \mathfrak{C}(H)$ and so, $S_{2}$ is accessible. If $\{x_{1},\ldots,x_{r-1}\}\cap S_{2}=\phi$, then $S_{2}\in \mathfrak{C}(H)$ and so is accessible. Let $T\in \mathcal{B}$. Then $T=\phi\cup S_{2}=S_{2}$, where $x_{r}\in S_{2}\in \mathfrak{C}(G^{2})$. Then from the construction of $G_{r}$ it is easy to observe that $T\setminus\{x_{r}\}\in \mathfrak{C}(H)$ and hence $T$ is accessible. Since $H$ is accessible and $\mathfrak{C}(G^{1})=\mathfrak{C}(G^{1}\setminus\{x_{r}\})=\phi$, any $T\in\mathcal{C}\cup\mathcal{D}$ is accessible. Thus $G_{r}$ is accessible. Now for $m>r$ or $n>r$, $x_{r+1},\ldots,x_{m}$ and $y_{r+1},\ldots,y_{n}$ are free vertex of $\overline{G}$. From Lemma \ref{starunm}, $J_{\overline{G}}$ is unmixed and hence by repeating application of Proposition \ref{freacc} we get $\overline{G}$ is accessible.
\end{proof}

\begin{theorem}\label{starsu}
Let $G=K_{m}\star_{r} K_{n}$. Then $J_{\overline{G}}$ is strongly unmixed. 
\end{theorem}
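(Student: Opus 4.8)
The plan is to prove the slightly more general statement that $J_{\overline{K_m\star_r K_n}}$ is strongly unmixed for all $m,n\ge r\ge 1$. Unmixedness of every ideal that will occur is essentially free: it holds for the graphs $\overline{K_m\star_r K_n}$ by Lemma~\ref{starunm}, it is preserved under clique-completion $G\mapsto G_v$ (since $\mathfrak{C}(G_v)\subseteq\mathfrak{C}(G)$ with unchanged component counts), and it can be propagated through the remaining deletions by Corollary~\ref{unmixed} and Lemma~\ref{freunm}. So at each step one only has to produce a cut vertex meeting the requirements of Definition~\ref{defsu}, or a free vertex to feed into Lemma~\ref{freeGsu}. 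First I would dispose of the ``extra'' vertices: if $m>r$, then $x_m$ is a free vertex of $\overline{K_m\star_r K_n}$ with $\overline{K_m\star_r K_n}\setminus\{x_m\}=\overline{K_{m-1}\star_r K_n}$, so Lemma~\ref{starunm} and Lemma~\ref{freeGsu} let us remove it; repeating this (and symmetrically on the $y_j$'s) reduces the claim to the case $m=n=r$, i.e.\ to $G_r:=\overline{K_r\star K_r}$, which I would treat by induction on $r$.

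The cases $r=1$ (where $G_1=K_2$ is complete) and $r=2$ (where $G_2$ is a $4$-cycle with two whiskers, hence bipartite, and is accessible by Lemma~\ref{staracc}, so $J_{G_2}$ is strongly unmixed by \cite[Corollary~6.9]{acc}) are immediate. For $r\ge 3$ the vertex $x_r$ is a cut vertex of $G_r$, and I would apply Definition~\ref{defsu} with $v=x_r$, verifying its three requirements. For the first one, $G_r\setminus\{x_r\}$ is the disjoint union of the isolated vertex $f_{x_r}$ with a graph $H$ which, exactly as in the proofs of Lemmas~\ref{starunm} and~\ref{staracc}, decomposes as $H=H_1\cup H_2$ with $H_1=K_2$, with $y_r$ a free vertex of $H_2$, and with $H_2\setminus\{y_r\}\cong G_{r-1}$; by the inductive hypothesis and Lemma~\ref{freeGsu} the ideal $J_{H_2}$ is strongly unmixed, hence so are $J_H$ and $J_{G_r\setminus\{x_r\}}$ by Remark~\ref{remglu}.

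For the remaining two requirements, write $G_r=K_2\cup G'$ with $G'=G_r\setminus\{f_{x_r}\}$ and $V(K_2)\cap V(G')=\{x_r\}$. Since $(K_2)_{x_r}=K_2$ has strongly unmixed binomial edge ideal, Proposition~\ref{Gvsu} reduces the strong unmixedness of $J_{(G_r)_{x_r}}$ to that of $J_{(G')_{x_r}}$, and then Lemma~\ref{Gv-vsu} supplies strong unmixedness of $J_{(G_r)_{x_r}\setminus\{x_r\}}$. Now in $(G')_{x_r}$ the vertex $x_r$ has become free, and $M:=(G')_{x_r}\setminus\{x_r\}$ is the graph obtained from two copies of $K_r$ glued along a single common vertex $y_r$ by adjoining the matching edges $\{x_i,y_i\}$ for $1\le i\le r-1$ together with whiskers at $x_2,\dots,x_{r-1},y_2,\dots,y_r$. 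I would prove $J_M$ strongly unmixed by invoking Definition~\ref{defsu} once more, at the cut vertex $y_r$ of $M$: one checks $M\setminus\{y_r\}\cong G_{r-1}\sqcup K_1$, strongly unmixed by the inductive hypothesis, while $M_{y_r}$ and $M_{y_r}\setminus\{y_r\}$ are complete graphs with whiskers, hence decomposable into complete graphs, so strongly unmixed by Remark~\ref{remglu}. Then Lemma~\ref{freeGsu} lifts strong unmixedness from $M$ to $(G')_{x_r}$, Proposition~\ref{Gvsu} lifts it to $(G_r)_{x_r}$, and Definition~\ref{defsu} gives $J_{G_r}$ strongly unmixed, completing the induction.

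I expect the main obstacle to be the combinatorial bookkeeping in the previous paragraph. The operation $G\mapsto G_v$ does not preserve the ``star product with whiskers'' shape, so one must write out $(G_r)_{x_r}$ and the auxiliary graph $M$ explicitly, pin down which vertices have become free, and recognize that after deleting them one lands on a relabeled copy of $\overline{K_s\star K_s}$ for some $s<r$ together with complete graphs; in parallel one has to confirm, for each intermediate graph $G'$, $(G')_{x_r}$, $M$, $M_{y_r}$, and so on, that its binomial edge ideal is unmixed so that Lemma~\ref{freeGsu} and Lemma~\ref{Gv-vsu} apply, an argument that mirrors the proof of Lemma~\ref{starunm} closely. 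Everything else is a routine assembly of Definition~\ref{defsu}, Lemma~\ref{freeGsu}, Remark~\ref{remglu}, and Propositions~\ref{Gvsu} and~\ref{Gvvsu}.
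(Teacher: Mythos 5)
Your proposal is correct and takes essentially the same route as the paper: the same reduction to $G_{r}=\overline{K_{r}\star K_{r}}$ via free vertices, the same induction on $r$ using the cut vertex $x_{r}$, the same treatment of $G_{r}\setminus\{x_{r}\}$, and the same key auxiliary graph (your $M$ coincides with the paper's $D'=(G_{r})_{x_{r}}\setminus\{x_{r},f_{x_{r}}\}$) handled at the cut vertex $y_{r}$ with $M\setminus\{y_{r}\}\cong G_{r-1}\sqcup K_{1}$ and $M_{y_{r}}$ a complete graph with whiskers, the only difference being that you transfer strong unmixedness between $M$, $(G_{r})_{x_{r}}$ and $(G_{r})_{x_{r}}\setminus\{x_{r}\}$ via Proposition~\ref{Gvsu} and Lemma~\ref{Gv-vsu} where the paper uses Lemma~\ref{freunm} and two applications of Lemma~\ref{freeGsu}. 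One caution on your ``unmixedness is essentially free'' remark: $J_{G_{r}\setminus\{f_{x_{r}}\}}$ is in fact \emph{not} unmixed (the set $\{x_{r},y_{1},\dots,y_{r-1}\}$ is a cutset whose complement has only $r$ components), so unmixedness is not literally preserved at every intermediate step; this does not break your argument, since none of the lemmas you invoke requires $J_{G'}$ to be unmixed, and the graphs that do need it ($(G')_{x_{r}}$, $M$, $(G_{r})_{x_{r}}$, $(G_{r})_{x_{r}}\setminus\{x_{r}\}$) are all unmixed.
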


\begin{proof}
Let $G_{r}=\overline{K_{r}\star K_{r}}$. For $m>r$ or $n>r$, $x_{r+1},\ldots,x_{m}$ and $y_{r+1},\ldots,y_{n}$ are free vertex of $\overline{G}$. Using repeating applications of Lemma \ref{starunm} and Lemma \ref{freeGsu}, it is enough to prove $J_{G_{r}}$ is strongly unmixed to show $J_{\overline{G}}$ is strongly unmixed.\par

Note that $G_{1}=K_{2}$ and so is $J_{G_{1}}$ is strongly unmixed by definition. Also, $G_{2}$ is a bipartite accessible graph and hence $J_{G_{2}}$ is strongly unmixed by \cite[Corollary 6.9]{acc}. We proceed by induction on $r$. By Lemma \ref{starunm}, $J_{G_{r}}$ is unmixed. Consider $G_{r}\setminus\{x_{r}\}=\{f_{x_{r}}\}\sqcup H$. Then by proof of Lemma \ref{staracc}, we have $G_{r}$ is accessible and $H$ is accessible. Therefore by \cite[Corollary 5.16]{acc}, $G_{r}\setminus \{x_{r}\}, (G_{r})_{x_{r}},\,\,\text{and}\,\, (G_{r})_{x_{r}}\setminus\{x_{r}\}$ are accessible and so, the corresponding binomial edge ideals are unmixed. Now $H$ is decomposable as $H=H_{1}\cup H_{2}$, where $H_{1}=H[\{y_{r},f_{y_{r}}\}]$ and $H_{2}=H\setminus \{f_{y_{r}}\}$. Note that $H_{2}\setminus \{y_{r}\}=G_{r-1}$ and so $J_{H_{2}\setminus\{y_{r}\}}$ is strongly unmixed by induction hypothesis. Since $y_{r}$ is a free vertex of $H_{2}$ and $J_{H_{2}}$ is unmixed, by Lemma \ref{freeGsu}, $J_{H_{2}}$ is strongly unmixed. Thus, $J_{H}$ is strongly unmixed by Remark \ref{remglu} and so is $J_{G_{r}\setminus\{x_{r}\}}$. Now consider the graph $(G_{r})_{x_{r}}\setminus \{x_{r}\}=D$ and set $D^{\prime}=D\setminus\{f_{x_{r}}\}$. Observe that $D^{\prime}\setminus\{y_{r}\}=\{f_{y_{r}}\}\sqcup G_{r-1}$. By induction hypothesis $J_{G_{r-1}}$ is strongly unmixed and so is $J_{D^{\prime}\setminus \{y_{r}\}}$. Also, $D^{\prime}_{y_{r}}$ and $D^{\prime}_{y_{r}}\setminus \{y_{r}\}$ are complete graph with some whiskers and hence $J_{D^{\prime}_{y_{r}}}$ and $J_{D^{\prime}_{y_{r}}\setminus\{y_{r}\}}$ are strongly unmixed. Now 
$$\mathcal{N}_{D}(f_{x_{r}})=\{x_{1},\ldots,x_{r-1},y_{r}\}\not\in \mathfrak{C}(D)$$
 as $x_{1}$ can not be a cut point in $D\setminus \{x_{2},\ldots,x_{r-1},y_{r}\}$. Since $f_{x_{r}}$ is a free vertex in $D$, from Lemma \ref{freunm} we have $J_{D^{\prime}}$ is unmixed. Therefore by definition $J_{D^{\prime}}$ is strongly unmixed. Using Lemma \ref{freeGsu} we get $J_{D}$ is strongly unmixed. $(G_{r})_{x_{r}}$ is unmixed, $x_{r}$ is a free vertex of $(G_{r})_{x_{r}}$ and $J_{D}$ is strongly unmixed together imply $J_{(G_{r})_{x_{r}}}$ is strongly unmixed (using Lemma \ref{freeGsu}). Hence by definition $J_{G_{r}}$ is strongly unmixed.
\end{proof}

\begin{lemma}\label{r-regcomp}
Let $B$ be a non-complete block such that $B$ is $r$-regular and $r$-connected. If $\overline{B}$ is accessible, then for any $v\in V(B)$, $B\setminus \mathcal{N}_{B}(v)$ contains two connected components.
\end{lemma}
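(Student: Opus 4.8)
The plan is to argue by contradiction: suppose that for some vertex $v \in V(B)$, the graph $B \setminus \mathcal{N}_{B}(v)$ is connected (it is nonempty since $v$ itself survives, as $v \notin \mathcal{N}_{B}(v)$ in a simple graph). I will show this forces $\overline{B}$ to fail accessibility, or more precisely forces $J_{\overline{B}}$ to be mixed, contradicting the hypothesis that $\overline{B}$ is accessible. The key combinatorial observation is that $\mathcal{N}_{B}(v)$ should be a cutset of $B$ (and hence, after passing to $\overline{B}$, interact badly with the whiskers), because $B$ is non-complete, $r$-regular and $r$-connected: I will need to verify that $\mathcal{N}_{B}(v) \in \mathfrak{C}(B)$ by checking each $u \in \mathcal{N}_B(v)$ is a cut vertex of $B \setminus (\mathcal{N}_B(v)\setminus\{u\})$, which should follow because $u$ has a neighbor outside $\mathcal{N}_B(v) \cup \{v\}$ (using $r$-regularity and non-completeness to guarantee $u$ is not a free vertex of that induced subgraph) while $v$ becomes isolated-from-the-rest through $u$.

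Next I would translate this into a statement about $\overline{B}$. Recall $\overline{B}$ is obtained from $B$ by attaching a whisker at each cut vertex of $G$ lying in $V(B)$; since accessibility of $\overline{B}$ is the hypothesis, I may use Theorem \ref{accthm}: every vertex of $\overline{B}$ is adjacent to a cut vertex, the cut vertices induce a connected subgraph, and every nonempty cutset contains a cut vertex. In particular $v$ is adjacent to some cut vertex $w$ of $\overline{B}$. The idea is then to exhibit a cutset $T$ of $\overline{B}$ with $c_{\overline{B}}(T) > |T| + 1$: if $B \setminus \mathcal{N}_B(v)$ is connected, then in $\overline{B}$ the set $T = \mathcal{N}_{B}(v)$ separates off the single vertex $v$ (plus possibly a whisker at $v$) from the big connected remainder, but because each of the $r$ vertices of $\mathcal{N}_B(v)$ also carries a whisker in $\overline{B}$ (those that are cut vertices of $G$), removing $T$ also detaches those whisker-vertices, producing too many components relative to $|T|$. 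I would count components carefully: $\overline{B} \setminus T$ has the component of $v$, the whisker-leaves hanging off vertices of $T$, and the remaining big piece, giving strictly more than $|T|+1$ components, which contradicts unmixedness of $J_{\overline{B}}$ — and accessibility of $\overline{B}$ requires $J_{\overline{B}}$ unmixed.

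The main obstacle I anticipate is the careful bookkeeping of which vertices of $\mathcal{N}_B(v)$ actually receive whiskers in $\overline{B}$ (only those that are cut vertices of the ambient $G$), and ensuring the component count genuinely exceeds $|T|+1$ rather than merely equalling it; this is where $r$-connectivity and $r$-regularity must be invoked in a sharp way. If it turns out that few or no neighbors of $v$ are whiskered, I would instead argue directly inside $B$: connectedness of $B\setminus\mathcal{N}_B(v)$ together with $r$-regularity and $r$-connectivity should still be incompatible with $\overline{B}$ being accessible, by showing $\mathcal{N}_B(v)$ fails to be accessible as a cutset — i.e., no proper subset obtained by deleting one element is again a cutset — because removing any single neighbor of $v$ leaves $v$ still cut off (as $\deg_B v = r$ and the $r$-connectivity of $B$ forces each of the $r$ edges at $v$ to be essential). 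I would then fall back on Proposition \ref{accnoncut} and Theorem \ref{accthm}(i) to derive the contradiction. An alternative, possibly cleaner route is to use Theorem \ref{accthm}(iii): every vertex of $\overline{B}$ is adjacent to a cut vertex, and combined with $r$-regularity this pins down enough structure of $\mathcal{N}_B(v)$ to force the two-component conclusion; I would try this first and resort to the direct counting argument only if needed.
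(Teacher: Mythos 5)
Your proof attacks the wrong negation, and as a result it never touches the actual content of the lemma. In $B\setminus\mathcal{N}_{B}(v)$ the vertex $v$ is always isolated (all of its neighbours have been deleted), so that graph is connected only in the degenerate case $V(B)=\{v\}\cup\mathcal{N}_{B}(v)$, which by $r$-regularity forces $B=K_{r+1}$ and is already excluded by the hypothesis that $B$ is non-complete. What the lemma really asserts --- and what Proposition \ref{r-regacc} later relies on --- is that $B\setminus\mathcal{N}_{B}(v)$ has \emph{exactly} two components, i.e.\ that besides the isolated vertex $v$ there is only \emph{one} further component. Your contradiction hypothesis (``$B\setminus\mathcal{N}_{B}(v)$ is connected'') rules out only the trivial one-component case; the case of three or more components, which is the substance of the statement, is never addressed. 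Your own write-up is internally inconsistent on this point: you speak of $\mathcal{N}_{B}(v)$ ``separating off the single vertex $v$ from the big connected remainder'' while simultaneously assuming $B\setminus\mathcal{N}_{B}(v)$ is connected.

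Moreover, the unmixedness count you propose would not close that gap even if redirected at the case of $s\geq 2$ extra components. Writing $k$ for the number of whiskered vertices of $\overline{B}$ lying in $\mathcal{N}_{B}(v)$, one gets $c_{\overline{B}}(\mathcal{N}_{B}(v))=1+s+k$ with $\vert\mathcal{N}_{B}(v)\vert=r$, so unmixedness only yields $s+k=r$; to conclude $s=1$ you would need $k=r-1$, and in the paper that count of whiskered neighbours is \emph{deduced from} this lemma in Proposition \ref{r-regacc}, so it is not available beforehand. The paper's actual route is different: $\mathcal{N}_{B}(v)\in\mathfrak{C}(\overline{B})$ is an accessible cutset, so some $w\in\mathcal{N}_{B}(v)$ satisfies $\mathcal{N}_{B}(v)\setminus\{w\}\in\mathfrak{C}(\overline{B})$; by $r$-connectivity every component of $B\setminus\mathcal{N}_{B}(v)$ other than $\{v\}$ must be adjacent to \emph{all} of $\mathcal{N}_{B}(v)$, and \cite[Lemma 4.14]{acc} says such a $w$ can be adjacent to only two components of $\overline{B}\setminus\mathcal{N}_{B}(v)$, which forces $s\leq 1$, with $s=0$ excluded by non-completeness. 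Finally, your parenthetical claim that deleting one neighbour of $v$ ``leaves $v$ still cut off'' is false --- $v$ remains joined to the surviving neighbour $w$ --- and in any case whether $\mathcal{N}_{B}(v)\setminus\{w\}$ is a cutset is a condition on each remaining vertex being a cut vertex of the appropriate subgraph, not on $v$ being separated.
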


\begin{proof}
Let $v\in V(B)$ be such that $B\setminus \mathcal{N}_{B}(v)$ contains the connected components $A_{1},\ldots, A_{s}$ and $\{v\}$. Since $B$ is non-complete, $r$-regular and $r$-connected, $\mathcal{N}_{B}(v)\in \mathfrak{C}(\overline{B})$ is clear. By accessibility of $\overline{B}$, there exists $w\in \mathcal{N}_{B}(v)$ such that $\mathcal{N}_{B}(v)\setminus\{w\}\in \mathfrak{C}(\overline{B})$. Now any $A_{i}$ is adjacent to all vertices belong to $ \mathcal{N}_{B}(v)$, otherwise we will get less than $r$ vertices to disconnect the block $B$. Since $\mathcal{N}_{B}(v)\setminus \{w\}\in \mathfrak{C}(\overline{B})$, by \cite[Lemma 4.14]{acc}, $w$ is adjacent to exactly two connected components of $\overline{B}\setminus \mathcal{N}_{B}(v)$ one of which contains $v$. Therefore $s$ must be $1$ and $w$ can not be a cut vertex of $\overline{B}$ or $s=0$ and $w$ is a cut vertex of $\overline{B}$. But, $s=0$ implies $B$ is complete and so $s=1$ is the only possibility.
\end{proof}

\begin{proposition}\label{r-regacc}
Let $B$ be a non-complete $r$-regular $r$-connected block. If $\overline{B}$ is accessible, then $\overline{B}=G_{r}=\overline{K_{r}\star K_{r}}$, where $r\geq 2$.
\end{proposition}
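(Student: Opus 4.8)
The plan is to analyze the local structure at a single vertex $v\in V(B)$ and bootstrap from Lemma \ref{r-regcomp}. First I would fix any $v\in V(B)$; since $B$ is non-complete, $r$-regular and $r$-connected, the set $\mathcal{N}_B(v)$ is a cutset of $\overline{B}$, and by Lemma \ref{r-regcomp} the graph $B\setminus \mathcal{N}_B(v)$ has exactly two connected components, namely $\{v\}$ and one further component, call it $A_v$. Thus $V(B)=\{v\}\sqcup \mathcal{N}_B(v)\sqcup V(A_v)$ with $|\mathcal{N}_B(v)|=r$. The next step is to count: every vertex of $A_v$ has all $r$ of its neighbours inside $\mathcal{N}_B(v)\cup V(A_v)$, and $r$-connectivity forces each vertex of $\mathcal{N}_B(v)$ to be adjacent to $A_v$ (else fewer than $r$ vertices would disconnect $B$, as in the proof of Lemma \ref{r-regcomp}). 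I would then argue that $A_v$ is itself a single vertex: pick $w\in V(A_v)$; by symmetry (applying Lemma \ref{r-regcomp} to $w$ in place of $v$) $B\setminus\mathcal{N}_B(w)$ has exactly two components, one of which is $\{w\}$; since $v\notin\mathcal{N}_B(w)$ (as $v$ is only adjacent to $\mathcal{N}_B(v)$, which is disjoint from $\{w\}\cup V(A_v)$ except possibly... here one must check $v\notin\mathcal N_B(w)$, which holds because $w\in V(A_v)$ and $v$'s neighbours are exactly $\mathcal N_B(v)$), $v$ lies in the non-singleton component of $B\setminus\mathcal N_B(w)$, forcing $\mathcal N_B(w)\supseteq\mathcal N_B(v)\setminus(\text{stuff})$; a careful degree count then pins down $|V(A_v)|=1$ and $\mathcal{N}_B(w)=\mathcal{N}_B(v)$.

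Granting $V(A_v)=\{w\}$, we have $V(B)=\{v,w\}\cup \mathcal{N}_B(v)$ with $|\mathcal N_B(v)|=r$, so $|V(B)|=r+2$, and both $v$ and $w$ are adjacent to all of $\mathcal{N}_B(v)$. Now I would examine the induced subgraph on $\mathcal{N}_B(v)$: each vertex $u\in\mathcal N_B(v)$ has degree $r$ in $B$, and $u$ is adjacent to both $v$ and $w$, so $u$ has exactly $r-2$ neighbours inside $\mathcal{N}_B(v)$; hence $B[\mathcal{N}_B(v)]$ is $(r-2)$-regular on $r$ vertices, i.e. its complement is $1$-regular, a perfect matching. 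So $B$ is obtained from $K_{r+2}$ on $\{v,w\}\cup\mathcal N_B(v)$ by deleting the edge $\{v,w\}$ and a perfect matching on $\mathcal N_B(v)$ — and this is, up to relabelling, precisely $K_r\star K_r$: take the two classes of the bipartition-like structure $\{v\}\cup\{r/2 \text{ matched vertices}\}$... actually the clean identification is that $K_r\star K_r$ on $\{x_1,\dots,x_r,y_1,\dots,y_r\}$ is $r$-regular (each $x_i$ has $r-1$ neighbours in $K_r$ plus the edge to $y_i$), $r$-connected, non-complete, with $2r$ vertices; I must reconcile the vertex counts. Here I would be more careful: the correct conclusion is that the non-complete $r$-regular $r$-connected block with $\overline B$ accessible is forced to have the bipartite-type structure of $K_r\star K_r$, and the matching-complement computation identifies $B\cong K_r\star K_r$ exactly, with $r\ge 2$ since $r=1$ gives $K_2$ which is complete. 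Finally, since the cut vertices of $G$ in $V(B)$ are, by accessibility and Theorem \ref{accthm}(iii) together with the star structure, exactly the vertices $x_2,\dots,x_r,y_2,\dots,y_r$ (one vertex in each of $K_r$ and the matched pair is free, matching the whisker placement in the definition of $\overline{K_r\star K_r}$), we get $\overline{B}=G_r=\overline{K_r\star K_r}$.

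The main obstacle I anticipate is the rigidity step: showing that the ``other component'' $A_v$ of $B\setminus\mathcal{N}_B(v)$ is a single vertex, and more generally that $\mathcal{N}_B(v)$ is the \emph{same} set of $r$ vertices for both $v$ and the vertex $w\in A_v$. This requires combining the two-component conclusion of Lemma \ref{r-regcomp} applied at several vertices with the $r$-regularity and $r$-connectivity constraints, and ruling out configurations where $A_v$ is larger (e.g. another $\star$-type gadget stacked on). The degree-counting is elementary once the component structure is nailed down, but the bookkeeping to exclude larger $A_v$ — essentially an induction or a direct argument that any vertex at distance $2$ from $v$ would violate either regularity or the two-component property — is where the real work lies. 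The identification of the resulting graph with $K_r\star K_r$ and the identification of its cut vertices with the whisker-bearing vertices of $\overline{K_r\star K_r}$ are then routine.
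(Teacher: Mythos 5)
Your proposal breaks down at the key structural step, and the breakdown is visible in the target graph itself. You claim that the second component $A_v$ of $B\setminus\mathcal{N}_B(v)$ is a single vertex, so that $|V(B)|=r+2$. But take $B=K_r\star K_r$ with $v=x_1$: then $\mathcal{N}_B(x_1)=\{x_2,\dots,x_r,y_1\}$ and $B\setminus\mathcal{N}_B(x_1)$ has components $\{x_1\}$ and the clique on $\{y_2,\dots,y_r\}$, so $A_v$ has $r-1$ vertices and $|V(B)|=2r$, not $r+2$. Your symmetry argument ``$\mathcal{N}_B(w)=\mathcal{N}_B(v)$ for $w\in A_v$'' is likewise false in this example ($\mathcal{N}(y_2)=\{y_1,y_3,\dots,y_r,x_2\}\neq\mathcal{N}(x_1)$). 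The downstream computation inherits the error: you derive that $B[\mathcal{N}_B(v)]$ is $(r-2)$-regular on $r$ vertices with a perfect-matching complement, which cannot even exist for odd $r$, yet $K_3\star K_3$ satisfies all the hypotheses. You noticed the vertex-count mismatch (``I must reconcile the vertex counts'') but the reconciliation is not a bookkeeping issue --- the local claim is simply wrong, and Lemma \ref{r-regcomp} alone cannot pin down the size of $A_v$.

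The missing ingredient is the unmixedness of $J_{\overline{B}}$ used as a counting device, which is how the paper proceeds. Since $c_{\overline{B}}(\mathcal{N}_B(v))=r+1$ and $B\setminus\mathcal{N}_B(v)$ has exactly two components, the whiskers force $\mathcal{N}_B(v)$ to contain exactly $r-1$ cut vertices of $\overline{B}$, for \emph{every} $v\in V(B)$. Double counting then gives $|L|=(r-1)n/r$ for the set $L$ of cut vertices, so there are $n/r$ non-cut vertices, each with exactly one non-cut neighbour; since $L\in\mathfrak{C}(\overline{B})$ and unmixedness forces $B\setminus L$ to be connected and $1$-regular, it is a single edge, whence $n=2r$. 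A further cutset argument (taking $T=\mathcal{N}_B(v_1)\cup\mathcal{N}_B(u_i)\cup\mathcal{N}_B(u_j)$ when $\{u_i,u_j\}\notin E(B)$ and contradicting unmixedness) shows the two neighbourhoods are cliques, identifying $B$ with $K_r\star K_r$ and the cut vertices with the whiskered vertices of $\overline{K_r\star K_r}$. None of this global counting appears in your sketch, and without it the local analysis at one vertex cannot close.
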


\begin{proof}
For each $v\in V(B)$, we have $\mathcal{N}_{B}(v)\in \mathfrak{C}(\overline{B})$ as $B$ is non-complete $r$-regular $r$-connected block. Unmixed property of $J_{\overline{B}}$ implies $$c_{\overline{B}}(\mathcal{N}_{B}(v))=\vert \mathcal{N}_{B}(v)\vert +1=r+1.$$
From Lemma \ref{r-regcomp} it follows that $B\setminus \mathcal{N}_{B}(v)$ contains two connected components and therefore $\mathcal{N}_{B}(v)$ should have $r-1$ cut vertices of $\overline{B}$. Let $L$ be the set of cut vertices of $\overline{B}$ and $\vert V(B)\vert=n$. Since each vertex of $B$ add $r-1$ degree to cut vertices, we have
\begin{align*}
\sum_{v\in L} \mathrm{deg}_{B}(v)=r\vert L\vert = (r-1)n\,\, \Rightarrow\,\, \vert L\vert =\dfrac{(r-1)n}{r}.
\end{align*}
Therefore, the number of non-cut vertices of $\overline{B}$ in $B$ is $n-\vert L\vert=\dfrac{n}{r}$. Since each cut vertex of $\overline{B}$ is adjacent to a non-cut vertex in $B$, we have $L\in \mathfrak{C}(\overline{B})$. $J_{\overline{B}}$ being unmixed, $c_{\overline{B}}(L)=\vert L\vert +1$ and so, the induced subgraph on the set of non-cut vertices of $B$ is connected. But, the degree of any non-cut vertex in the induced subgraph $B\setminus L$ will be one. So the only possibility is $B\setminus L$ is an edge i.e., $\dfrac{n}{r}=2$ and the two non-cut vertices are adjacent. Now $n=2r$ and $\vert L\vert=2(r-1)$. Let $V(B)=\{u_{1},\ldots,u_{r},v_{1},\ldots,v_{r}\}$ be such that $u_{1}$ and $v_{1}$ are the only non-cut vertices of $\overline{B}$ belonging to $B$. Then $\{u_{1},v_{1}\}\in E(B)$. Without loss of generality we can assume $\mathcal{N}_{B}(u_{1})=\{u_{2},\ldots,u_{r}\}$ and $\mathcal{N}_{B}(v_{1})=\{v_{2},\ldots,v_{r}\}$. Suppose $\{u_{i},u_{j}\}\not\in E(B)$ for some $2\leq i,j\leq r$. Now consider $T=\mathcal{N}_{B}(v_{1})\cup \mathcal{N}_{B}(u_{i})\cup \mathcal{N}_{B}(u_{j})$. Now $T$ contains $\vert T\vert -1$ cut vertices of $\overline{B}$ and $B\setminus T$ contains at least three connected components which include $\{u_{i}\},\{u_{j}\},\{v_{1}\}$. Then it is easy to see that $T\in \mathfrak{C}(\overline{B})$ and $c_{\overline{B}}(T)\geq \vert T\vert -1+3=\vert T\vert +2$, a contradiction to the fact that $J_{\overline{B}}$ is unmixed. Therefore $\mathcal{N}_{B}(u_{1})\setminus \{v_{1}\}$ is complete and similarly, $\mathcal{N}_{B}(v_{1})\setminus\{u_{1}\}$ is also complete. Since, $B$ is $r$-regular, the only possibility is $B=K_{r}\star K_{r}$ (doing some relabeling). Hence $\overline{B}=G_{r}=\overline{K_{r}\star K_{r}}$. 
\end{proof}

\begin{theorem}\label{starthm}
Let $G=\overline{B}$ be a connected graph such that $B$ is a non-complete $r$-regular $r$-connected. Then the following are equivalent:
\begin{enumerate}[(i)]
\item $G=\overline{K_{r}\star K_{r}}$ with $r\geq 2$;
\item $J_{G}$ is Cohen-Macaulay;
\item $R/J_{G}$ is $S_{2}$;
\item $G$ is accessible;
\item $J_{G}$ is strongly unmixed.
\end{enumerate}
\end{theorem}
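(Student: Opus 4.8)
The plan is to close a cycle of implications among the five statements, using the general hierarchy recalled in the introduction together with the two structural results already proved in this section. For an arbitrary graph one has
$$J_{G}\ \text{strongly unmixed} \Rightarrow J_{G}\ \text{Cohen-Macaulay} \Rightarrow R/J_{G}\ \text{is}\ S_{2} \Rightarrow G\ \text{accessible},$$
so the implications (v) $\Rightarrow$ (ii) $\Rightarrow$ (iii) $\Rightarrow$ (iv) require no new argument. It therefore suffices to prove (iv) $\Rightarrow$ (i) and (i) $\Rightarrow$ (v), which together with the above chain yield the full equivalence.

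For (iv) $\Rightarrow$ (i): I would simply observe that the hypothesis ``$G=\overline{B}$ with $B$ non-complete $r$-regular $r$-connected and $G$ accessible'' is exactly the input of Proposition~\ref{r-regacc}, whose conclusion is $\overline{B}=G_{r}=\overline{K_{r}\star K_{r}}$ with $r\geq 2$. One point to check is that accessibility supplies the unmixedness of $J_{\overline{B}}$ used inside Proposition~\ref{r-regacc} (via Definition~\ref{defacc}), so the hypotheses match on the nose; the substance there — forcing $c_{\overline{B}}(\mathcal{N}_{B}(v))=r+1$, counting cut vertices to get $|L|=(r-1)n/r$ and hence $n=2r$, and using Lemma~\ref{r-regcomp} to see a neighborhood removal leaves exactly two components — is already carried out.

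For (i) $\Rightarrow$ (v): if $G=\overline{K_{r}\star K_{r}}$ with $r\geq 2$, then this is the case $m=n=r$ of Theorem~\ref{starsu}, which states precisely that $J_{\overline{K_{m}\star_{r}K_{n}}}$ is strongly unmixed; so $J_{G}$ is strongly unmixed. Again the work is contained in the cited theorem, whose proof inducts on $r$, peels off the whisker at $x_{r}$, and invokes the gluing results Proposition~\ref{Gvsu}, Lemma~\ref{freeGsu}, and Remark~\ref{remglu}.

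Chaining (i) $\Rightarrow$ (v) $\Rightarrow$ (ii) $\Rightarrow$ (iii) $\Rightarrow$ (iv) $\Rightarrow$ (i) gives the equivalence of all five conditions. I do not anticipate a genuine obstacle: the theorem is essentially a repackaging of Proposition~\ref{r-regacc} and Theorem~\ref{starsu} against the known background hierarchy, and the only care needed is matching the unmixedness hypotheses between accessibility and those two results.
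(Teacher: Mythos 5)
Your proposal is correct and follows exactly the paper's own argument: (i) $\Rightarrow$ (v) by Theorem \ref{starsu}, (v) $\Rightarrow$ (ii) $\Rightarrow$ (iii) $\Rightarrow$ (iv) by the general hierarchy, and (iv) $\Rightarrow$ (i) by Proposition \ref{r-regacc}. Nothing further is needed.
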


\begin{proof}
$\text{(i)}\,\, \Rightarrow\,\, \text{(v)}$ follows from Theorem \ref{starsu}. We know 
$$\text{(v)}\,\, \Rightarrow \,\,\text{(ii)}\,\, \Rightarrow \,\,\text{(iii)}\,\, \Rightarrow\,\, \text{(iv)}.$$
 Also, by Proposition \ref{r-regacc}, $\text{(iv)}\,\, \Rightarrow\,\, \text{(i)}$. Therefore we have
$$\text{(i)}\,\,\Rightarrow\,\, \text{(v)}\,\, \Rightarrow \,\,\text{(ii)}\,\, \Rightarrow \,\,\text{(iii)}\,\, \Rightarrow\,\, \text{(iv)}\,\,\Rightarrow\,\,\text{(i)}.$$
\end{proof}

\begin{figure}[H]
	\centering
	\begin{tikzpicture}
  [scale=.6,auto=left,every node/.style={circle,scale=0.5}]
    
  \node[draw] (n1) at (0,4)  {$1$};
  \node[draw] (n2) at (0,0)  {$2$};
  \node[draw] (n3) at (2,2) {$3$};
   \node[draw] (m1) at (7,4) {$4$};
   \node[draw] (m2) at (7,0) {$5$};
  \node[draw] (m3) at (5,2) {$6$};
  \node[draw] (fn1) at (-3,4) {$7$};
  \node[draw] (fm1) at (10,4) {$8$};
  \node[draw] (fn2) at (-3,0) {$9$};
  \node[draw] (fm2) at (10,0) {$10$};

 \foreach \from/\to in {n1/n2,n1/n3, n1/fn1, n1/m1, n2/n3, n2/fn2, n2/m2, n3/m3, m1/m2, m1/m3, m2/m3, m2/fm2, m1/fm1}
    \draw[] (\from) -- (\to);
   
\end{tikzpicture}
	\caption{Planar graph $\overline{K_{3}\star K_{3}}$ with $J_{\overline{K_{3}\star K_{3}}}$ Cohen-Macaulay.}\label{figstar}
\end{figure}
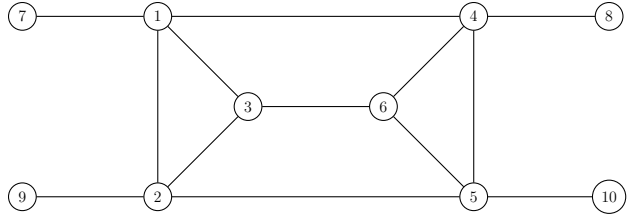

\begin{example}{\rm
In the \Cref{figstar}, the graph $\overline{K_{3}\star K_{3}}$ is a planar accessible graph such that $K_{3}\star K_{3}$ is $3$-regular $3$-connected. Note that for $r>3$, $K_{r}\star K_{r}$ is non-planar graph. By Theorem \ref{starthm}, $J_{\overline{K_{3}\star K_{3}}}$ is strongly unmixed (so is Cohen-Macaulay) and $K_{3}\star K_{3}$ is the largest $r$-regular $r$-connected non-complete planar graph with $\overline{K_{3}\star K_{3}}$ is accessible.
}
\end{example}

\section{Cohen-Macaulay Binomial Edge Ideals of Graphs Containing up to Three Cut Vertices }\label{seccutcon}
\medskip

In this section, we introduce a new family of graphs inductively. Then for these graphs having at most three cut vertices, we show that the equivalency of accessibility, Cohen-Macaulayness, and strongly unmixedness of binomial edge ideals holds.
\medskip

\begin{lemma}\label{cut3p1}
Let $B$ be a block of a graph such that $\overline{B}$ is accessible with three cut vertex $\{v_{1},v_{2},v_{3}\}$ and $\{v_{1},v_{2}\}, \{v_{2},v_{3}\}\in E(B)$ but $\{v_{1},v_{3}\}\not\in E(B)$. If there exists no path between $v_{1}$ and $v_{3}$ in $B\setminus \{v_{2},x_{1}\}$ and $B\setminus\{v_{2},y_{1}\}$, for some $x_{1}\in \mathcal{N}_{B}(v_{1})\setminus\{v_{2}\}$ and $y_{1}\in \mathcal{N}_{B}(v_{3})\setminus\{v_{2}\}$, then $J_{\overline{B}\setminus\{v_{i}\}}$ is unmixed for some $i\in\{1,2,3\}$.
\end{lemma}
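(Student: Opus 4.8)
\emph{Plan.} The plan is to reduce the statement to a question about cutsets via Proposition~\ref{accpropunm}, extract the structural content of the path hypothesis, and then rule out by contradiction the possibility that all three of $v_1,v_2,v_3$ fail. First note that $B$, being a block with three cut vertices of the ambient graph, has at least three vertices and no cut vertex of its own, hence is $2$-connected; therefore the cut vertices of $\overline{B}$ are exactly $v_1,v_2,v_3$ and the leaves $f_{v_1},f_{v_2},f_{v_3}$ are free vertices. For each $i$, $B\setminus\{v_i\}$ is connected, so $\overline{B}\setminus\{v_i\}=\{f_{v_i}\}\sqcup M_i$ with $M_i$ connected; applying Proposition~\ref{accpropunm} to the cut vertex $v_i$ of $\overline{B}$ (with $H_1=\{f_{v_i}\}$, which is free in $\overline{B}\setminus\{v_i\}$ and so lies in no cutset), I get that $J_{\overline{B}\setminus\{v_i\}}$ is unmixed iff no $T\in\mathfrak{C}(\overline{B}\setminus\{v_i\})$ contains $\mathcal{N}_{B}(v_i)$. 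I also record a lifting observation: if $\mathcal{N}_{B}(v_i)\subseteq T\in\mathfrak{C}(\overline{B}\setminus\{v_i\})$ then $v_i\notin T$ and the only neighbour of $v_i$ outside $\mathcal{N}_{B}(v_i)$ is $f_{v_i}$, so re-inserting $v_i$ merely hangs the pendant path $v_i-f_{v_i}$ onto the graph, changing no cut vertex; hence such a ``bad'' $T$ automatically lies in $\mathfrak{C}(\overline{B})$.

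Next I would extract the structural consequences of the hypothesis. Since $B\setminus\{v_2\}$ is connected but $B\setminus\{v_2,x_1\}$ is not, $x_1$ is a cut vertex of $\overline{B}\setminus\{v_2\}$; as $v_2$ is a cut vertex of $\overline{B}\setminus\{x_1\}$ (it separates $f_{v_2}$), we get $\{v_2,x_1\}\in\mathfrak{C}(\overline{B})$. Unmixedness of $J_{\overline{B}}$ then forces $c_{\overline{B}}(\{v_2,x_1\})=3$, i.e.\ $B\setminus\{v_2,x_1\}$ has exactly two components $C_1\ni v_1$ and $C_3\ni v_3$, and consequently $\mathcal{N}_{B}(v_1)\subseteq\{v_2,x_1\}\cup V(C_1)$ and $\mathcal{N}_{B}(v_3)\subseteq\{v_2,x_1\}\cup V(C_3)$. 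The same argument with $y_1$ gives components $D_1\ni v_1$, $D_3\ni v_3$ of $B\setminus\{v_2,y_1\}$ and the analogous containments for $\mathcal{N}_{B}(v_1),\mathcal{N}_{B}(v_3)$.

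Finally I would argue by contradiction: suppose $J_{\overline{B}\setminus\{v_i\}}$ is not unmixed for all three $i$, and pick bad cutsets $T_i\in\mathfrak{C}(\overline{B})$ with $\mathcal{N}_{B}(v_i)\subseteq T_i$, $v_i\notin T_i$. Then $v_2\in T_1\cap T_3$, $v_1,v_3\in T_2$, $\{v_2,x_1\}\subseteq T_1$ and $\{v_2,y_1\}\subseteq T_3$. Because $\{v_2,x_1\}\subseteq T_1$, the vertex set $B\setminus T_1$ lies in $C_1\sqcup C_3$ with $v_1$ isolated inside the $C_1$-part; comparing $c_{\overline{B}}(T_1)=|T_1|+1$ with the component count obtained by splitting $\overline{B}\setminus T_1$ along $C_1$, $C_3$ and the isolated leaves, and doing the analogous computation for the cutset $\{v_2,x_1\}\cup(T_1\cap V(C_1))\in\mathfrak{C}(\overline{B})$, pins down exactly how $C_1$ and $C_3$ break up. Carrying out the symmetric analysis for $T_3$ (with $D_1,D_3$) and for $T_2$, with a short case split according to whether $x_1=y_1$, one produces a subset of $V(\overline{B})$ that is a cutset of $\overline{B}$ with more components than one plus its cardinality, contradicting unmixedness of $J_{\overline{B}}$. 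The reduction and the identification of the small separators $\{v_2,x_1\},\{v_2,y_1\}$ are routine; the main obstacle is precisely this last bookkeeping step --- correctly locating $v_1,v_3$ and the three leaves inside the various deleted subgraphs, and keeping the cases $x_1=y_1$ and $x_1\neq y_1$ apart --- which is where the hypothesis on $x_1,y_1$ is used in an essential way.
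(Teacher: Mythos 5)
Your reduction via Proposition~\ref{accpropunm} (equivalently, \cite[Proposition 6.1]{acc}) and your identification of $\{v_2,x_1\},\{v_2,y_1\}\in\mathfrak{C}(\overline{B})$ with $c_{\overline{B}}(\{v_2,x_1\})=3$ agree with the opening of the paper's proof. But the argument stops being ``routine bookkeeping'' exactly where you defer it: the entire content of the lemma is the derivation of a contradiction from the three bad cutsets $T_1,T_2,T_3$, and you do not carry it out. Two concrete problems with the route you sketch. First, you assert that $\{v_2,x_1\}\cup(T_1\cap V(C_1))$ is again a cutset of $\overline{B}$; this does not follow from $T_1\in\mathfrak{C}(\overline{B})$ --- removing a smaller set leaves more of the graph intact, so an element of $T_1\cap V(C_1)$ need not remain a cut vertex of the smaller deletion, and no argument is offered.

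Second, and more seriously, you propose to reach the final contradiction using only unmixedness (``a cutset with more components than one plus its cardinality''). The paper's proof shows this is not enough: besides unmixedness it uses, repeatedly, the accessible-set-system property (via Proposition~\ref{accnoncut}, to peel single vertices off $\mathcal{N}_B(v_i)\in\mathfrak{C}(\overline{B})$ and manufacture small cutsets such as $\{v_2,x_1,x_2\}$) and Theorem~\ref{accthm}(iii) (every vertex is adjacent to a cut vertex, which is what pins down the stray components $A_2$, $A^{w'}$, $A^{t}$, and forces the adjacency patterns that eventually clash with $\mathcal{N}_B(v_i)\subseteq S_i$). In one of its cases --- the chordless path $v_1,x_1,y_1,v_3$ --- the contradiction obtained is that $S=\{v_2,x_1,x_2,y_1,y_2\}$ is a cutset none of whose one-element deletions is a cutset, i.e.\ a violation of accessibility, not of unmixedness. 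The completed proof also requires a genuine case split on $\lvert\mathcal{N}_B(v_1)\rvert$ and on the shape of the $v_1$--$v_3$ path in $B\setminus\{v_2\}$ (including the degenerate cases $x_1=y_1$ and $\mathcal{N}_B(v_1)=\{v_2,x_1\}$). As written, your proposal is a correct setup followed by a missing core, and the specific closing move you name is unlikely to close it.
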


\begin{proof}
$\overline{B}$ is accessible implies $J_{\overline{B}}$ is unmixed and so $c(\{v_{2}\})=2$. Therefore, in $B\setminus \{v_{2}\}$, there is a path between $v_{1}$ and $v_{3}$. By the given condition, $v_{1},x_{1},\ldots,y_{1},v_{3}$ is a path in $B\setminus \{v_{2}\}$. Suppose there exists no cut vertex $v_{i}$ of $\overline{B}$ such that $J_{\overline{B}\setminus\{v_{i}\}}$ is unmixed. Then by \cite[Proposition 6.1]{acc}, for each cut vertex $v_{i}\in V(B)$, where $1\leq i\leq 3$, there exists a cut set $S_{i}$ of $\overline{B}\setminus \{v_{i}\}$ containing $\mathcal{N}_{B}(v_{i})$ and so, by \cite[Remark 5.4]{acc}, $\mathcal{N}_{B}(v_{i})\in \mathfrak{C}(\overline{B})$ for all $1\leq i\leq 3$.\par

Let $\mathcal{N}_{B}(v_{1})=\{v_{2},x_{1},\ldots,x_{s}\}$ and assume $s\geq 2$. It is clear that $\{v_{2},x_{1}\}\in \mathfrak{C}(\overline{B})$ and also, we have $\mathcal{N}_{B}(v_{1})\in \mathfrak{C}(\overline{B})$. Then from Proposition \ref{accnoncut}, we can conclude $\{v_{2},x_{j}\}\in \mathfrak{C}(\overline{B})$ and $\{v_{2},x_{j},x_{i}\}\in \mathfrak{C}(\overline{B})$ for some $i,j\in\{1,\ldots,s\}$. Let $j\neq 1$. Then there is a connected component $A_{j}$ in $\overline{B}\setminus \{v_{2},x_{j}\}$ other than the component containing $\{v_{1},v_{3}\}$ and the component $\{f_{v_{2}}\}$. Also, $A_{j}$ is adjacent to only $x_{j}$ and $v_{2}$ in $\overline{B}$. Thus, Theorem \ref{accthm} implies that every vertex in $A_{j}$ is adjacent to the cut vertex $v_{2}$. Choose $y\in V(A_{j})$. In $(\overline{B}\setminus \{v_{2}\})\setminus (S_{2}\setminus \{y\})$, $y$ can not be a cut vertex and this contradicts the fact $\mathcal{N}_{B}(v_{2})\subseteq S_{2}\in \mathfrak{C}(\overline{B}\setminus\{v_{2}\})$. Therefore $j=1$ is the only possibility and without loss of generality taking $i=2$ we get $\{v_{2},x_{1},x_{2}\}\in \mathfrak{C}(\overline{B})$. As $J_{\overline{B}}$ is unmixed, we have $c_{\overline{B}}(\{v_{2},x_{1},x_{2}\})=4$. Let $A_{1},A_{2}, A_{3}, \{f_{v_{2}}\}$ be the four connected components of $\overline{B}\setminus \{v_{2},x_{1},x_{2}\}$ such that $v_{1}\in V(A_{1}), v_{3}\in V(A_{3})$. Now $A_{2}$ contains no cut vertex of $\overline{B}$ and vertices belong to $A_{2}$ can be adjacent to only $x_{1},x_{2}$ and $v_{2}$ in $\overline{B}$ outside $A_{2}$. Let $w\in V(A_{2})$ be any vertex. Then $w$ should be adjacent to a cut vertex in $\overline{B}$ as $\overline{B}$ is accessible and the only choice is $v_{2}$. Thus, $V(A_{2})\subseteq \mathcal{N}_{B}(v_{2})$. Again we have $\mathcal{N}_{B}(v_{2})\subseteq S_{2}\in \mathfrak{C}(\overline{B}\setminus\{v_{2}\})$. Therefore, in $(\overline{B}\setminus \{v_{2}\})\setminus (S_{2}\setminus \{w\})$, $w$ should be a cut vertex and so we should have $\{x_{1},w\}, \{x_{2},w\}\in E(B)$ and $\{x_{1},x_{2}\}\not\in E(B)$ with $x_{1},x_{2}\not\in \mathcal{N}_{B}(v_{2})$. These conditions hold for any vertex in $A_{2}$.\par 

\noindent\textbf{Case-I:} Let $v_{1},x_{1},z,\ldots,y_{1},v_{3}$ be a chordless path between $v_{1}$ and $v_{3}$ in $G\setminus \{v_{2}\}$, where $z, x_{1}, y_{1}$ are distinct. Then $w\neq z$ otherwise, $v_{1},x_{2},w=z,\ldots,y_{1},v_{3}$ would be a path. Since the path is chordless, $\{z,v_{2}\}\in E(B)$ follows from Theorem \ref{accthm}. Now $\mathcal{N}_{B}(v_{2})\in \mathfrak{C}(\overline{B})$ implies there exists $w^{\prime}\in \mathcal{N}_{B}(v_{2})$ such that $\{v_{1},v_{3},w^{\prime}\}\in \mathfrak{C}(\overline{B})$ by Proposition \ref{accnoncut}. Suppose $w^{\prime}=z$ and $\{f_{v_{1}}\},\{f_{v_{3}}\}, A^{z}_{v_{2}}, A^{z}$ are four connected components of $\overline{B}\setminus \{v_{1},v_{3},z\}$, where $v_{2}\in A^{z}_{v_{2}}$. Then any vertex in $A^{z}$ is adjacent to one of $v_{1}$ or $v_{3}$. As there is no path between $v_{1}$ and $v_{3}$ in $B\setminus \{v_{2},x_{1}\}$ and $B\setminus\{v_{2},y_{1}\}$, the only possibility is $A^{z}\subseteq\{x_{1},y_{1}\}$. But $\{x_{1},w\},\{w,v_{2}\}\in E(\overline{B})$ implies $x_{1}\not\in A^{z}$ and similarly, $y_{1}\not\in A^{z}$. Therefore $w^{\prime}\neq z$. Let $\{f_{v_{1}}\},\{f_{v_{3}}\}, A^{w^{\prime}}_{v_{2}}, A^{w^{\prime}}$ are four connected components of $\overline{B}\setminus \{v_{1},v_{3},w^{\prime}\}$, where $v_{2}\in A^{w^{\prime}}_{v_{2}}$. By Theorem \ref{accthm}, every vertex in $A^{w^{\prime}}$ is adjacent to $v_{1}$ or $v_{3}$. Note that $x_{1},y_{1}\in A^{w^{\prime}}_{v_{2}}$. If there exists $x^{\prime},y^{\prime}\in V(A^{w^{\prime}})$ such that $\{x^{\prime},v_{1}\}\in E(B)$ and $\{y^{\prime},v_{3}\}\in E(B)$, then $v_{1},x^{\prime},\ldots,y^{\prime},v_{3}$ or $v_{1},x^{\prime}=y^{\prime},v_{3}$ will be a path and it contradicts our given hypothesis. So $A^{w^{\prime}}$ is adjacent to exactly one of $v_{1}$ or $v_{3}$, without loss of generality, say $v_{1}$. Take $u \in V(A^{w^{\prime}})$. Now $\mathcal{N}_{B}(v_{1})\subseteq S_{1}\in \mathfrak{C}(\overline{B}\setminus\{v_{1}\})$ but in $(\overline{B}\setminus \{v_{1}\})\setminus (S_{1}\setminus \{u\})$, $u$ can not be a cut vertex and this gives a contradiction.\par 

\noindent\textbf{Case-II:} Let $v_{1},x_{1},y_{1},v_{3}$ be the only chordless path between $v_{1}$ and $v_{3}$ in $G\setminus \{v_{2}\}$. Then there exists $\{v_{2},x_{1},x_{2}\}\in \mathfrak{C}(\overline{B})$ and $\{v_{2},y_{1},y_{2}\}\in \mathfrak{C}(\overline{B})$, where $x_{1},x_{2}\in \mathcal{N}_{B}(v_{1})$, $y_{1},y_{2}\in \mathcal{N}_{B}(v_{3})$ such that $y_{1},y_{2}$ are similar as $x_{1},x_{2}$ as mentioned in the beginning. In this set up, it is easy to observe that, $S=\{v_{2},x_{1},x_{2},y_{1},y_{2}\}\in \mathfrak{C}(\overline{B})$. But $S\setminus \{s\}\not\in \mathfrak{C}(\overline{B})$ for any $s\in\{x_{1},x_{2},y_{1},y_{2}\}$ which contradicts the accessibility of $\overline{B}$.\par 

\noindent\textbf{Case-III:} Let $v_{1},x_{1}=y_{1},v_{3}$ be the path between $v_{1}$ and $v_{3}$ in $G\setminus \{v_{2}\}$. Now $\mathcal{N}_{B}(v_{2})\in \mathfrak{C}(\overline{B})$ implies there is $t\in \mathcal{N}_{B}(v_{2})$ such that $\{v_{1},v_{3},t\}\in \mathfrak{C}(\overline{B})$ by Proposition \ref{accnoncut}. Note that $\{v_{2},x_{1}\}\in \mathfrak{C}(\overline{B})$. Let $H_{1},H_{3},\{f_{v_{2}}\}$ be three connected components of $\overline{B}\setminus \{v_{2},x_{1}\}$ such that $v_{1}\in H_{1}$ and $v_{3}\in H_{3}$. We have $t\in H_{1}$ or $t\in H_{3}$, say $t\in H_{1}$. Suppose $\{f_{v_{1}}\}, \{f_{v_{3}}\}, A^{t}_{v_{2}}, A^{t}$ are the four connected components of $\overline{B}\setminus \{v_{1},v_{3},t\}$ such that $v_{2}\in A^{t}_{v_{2}}$. Observe that $x_{1}\in A^{t}_{v_{2}}$. Therefore, any $u\in A^{t}$ is adjacent to $v_{1}$, otherwise there will be a path between $v_{1}$ and $v_{3}$ in $\overline{B}\setminus \{v_{2},x_{1}\}$. As $u$ can be adjacent to only $v_{1}$ and $t$ other than the vertices of $V(A^{t})$, in $(\overline{B}\setminus \{v_{1}\})\setminus (S_{1}\setminus \{u\})$, $u$ can not be a cut vertex. Thus, $\mathcal{N}_{B}(v_{1})\subseteq S_{1}\in \mathfrak{C}(\overline{B})$ gives a contradiction.\par 

So our assumption $s\geq 2$ was wrong and we will examine the case $s=1$. Now $\mathcal{N}_{B}(v_{1})=\{v_{2},x_{1}\}$ and similarly, we have $\mathcal{N}_{B}(v_{3})=\{v_{2},y_{1}\}$. Then by \cite[Lemma 6.2]{acc}, we get $\{v_{2},x_{1}\}\not\in E(B)$ and $\{v_{2},y_{1}\}\not\in E(B)$.\par 

\noindent\textbf{Case-A:} Let $x_{1}\neq y_{1}$. Note that $\{v_{1},v_{3}\}\in \mathfrak{C}(\overline{B})$. So, $c(\{v_{1},v_{3}\})=3$ as $J_{\overline{B}}$ is unmixed. Therefore, $x_{1},y_{1},v_{2}$ are connected in $\overline{B}\setminus \{v_{1},v_{3}\}$. Also, note that any vertex in $V(B)\setminus \{x_{1},y_{1}\}$ is adjacent to $v_{2}$ by Theorem \ref{accthm}. So $ \mathcal{N}_{B}(v_{2})\setminus \{v_{1},v_{3}\}\neq \phi$. Since $\mathcal{N}_{B}(v_{2})\subseteq S_{2}\in \mathfrak{C}(\overline{B}\setminus \{v_{2}\})$, for any $w\in \mathcal{N}_{B}(v_{2})\setminus \{v_{1},v_{3}\}$, $w$ should be adjacent to both $x_{1}$, $y_{1}$ and also, $\{x_{1},y_{1}\}\not\in E(B)$. We have $\mathcal{N}_{B}(v_{2})\in \mathfrak{C}(\overline{B})$ and $\overline{B}\setminus \mathcal{N}_{B}(v_{2})$ has five connected components, namely, $\{f_{v_{1}}\},\{f_{v_{3}}\}, \overline{B}[\{f_{v_{2}}, v_{2}\}], \{x_{1}\}, \{y_{1}\}$. Thus, by unmixed property of $J_{\overline{B}}$ we have $\mathcal{N}_{B}(v_{2})=\{v_{1},v_{3}, w_{1},w_{2}\}$. But, we do not have $\{v_{1},v_{3},w_{1}\}$ or $\{v_{1},v_{3},w_{2}\}$ as a cut set of $\overline{B}$ which gives a contradiction by Proposition \ref{accnoncut} to the fact that $\overline{B}$ is accessible.\par 

\noindent\textbf{Case-B:} Let $x_{1}=y_{1}$. Again $\{v_{1},v_{3}\}\in \mathfrak{C}(\overline{B})$ and So, $c(\{v_{1},v_{3}\})=3$. Then there exists $w\in \mathcal{N}_{B}(v_{2})\setminus \{v_{1},v_{3}\}$ such that $\{w,x_{1}\}\in E(B)$. Now it is easy to observe that $\{v_{2},x_{1}\}\in \mathfrak{C}(\overline{B})$ but, $c(\{v_{2},x_{1}\})>3$ which is a contradiction.\\
Hence, our initial assumption was wrong and we can conclude that $J_{\overline{B}\setminus\{v_{i}\}}$ is unmixed for some $i\in\{1,2,3\}$.
\end{proof}

\begin{lemma}\label{cut3p2}
Let $B$ be a block of a graph such that $\overline{B}$ is accessible with three cut vertex $\{v_{1},v_{2},v_{3}\}$ and $\{v_{1},v_{2}\}, \{v_{2},v_{3}\}\in E(B)$ but $\{v_{1},v_{3}\}\not\in E(B)$. If there exists a path between $v_{1}$ and $v_{3}$ in $B\setminus \{v_{2},x\}$ for each $x\in \mathcal{N}_{B}(v_{1})$, then $J_{\overline{B}\setminus\{v_{i}\}}$ is unmixed for $i\in\{1,2,3\}$.
\end{lemma}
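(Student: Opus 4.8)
The plan is to argue by contradiction, in the same spirit as the proof of Lemma \ref{cut3p1}. Suppose $J_{\overline{B}\setminus\{v_{i}\}}$ is not unmixed for every $i\in\{1,2,3\}$. Then, exactly as in Lemma \ref{cut3p1}, by \cite[Proposition 6.1]{acc} there is for each $i$ a cutset $S_{i}\in\mathfrak{C}(\overline{B}\setminus\{v_{i}\})$ with $\mathcal{N}_{B}(v_{i})\subseteq S_{i}$, and hence by \cite[Remark 5.4]{acc} we have $\mathcal{N}_{B}(v_{i})\in\mathfrak{C}(\overline{B})$ for $i=1,2,3$; since $\overline{B}$ is accessible, $J_{\overline{B}}$ is unmixed, so $c_{\overline{B}}(\mathcal{N}_{B}(v_{i}))=\mathrm{deg}_{B}(v_{i})+1$ and also $c_{\overline{B}}(\{v_{2}\})=2$, so $v_{1}$ and $v_{3}$ lie in the same connected component of $B\setminus\{v_{2}\}$.

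Next I would extract the structural input special to this lemma. Since $v_{1}$ is a cut vertex we may pick $x\in\mathcal{N}_{B}(v_{1})\setminus\{v_{2}\}$; the second vertex of the $v_{1}$--$v_{3}$ path guaranteed in $B\setminus\{v_{2},x\}$ is a neighbour of $v_{1}$ distinct from $v_{2}$ and from $x$, so $\mathrm{deg}_{B}(v_{1})\geq 3$, and I would write $\mathcal{N}_{B}(v_{1})=\{v_{2},x_{1},\dots,x_{s}\}$ with $s\geq 2$. More importantly, the hypothesis says that no single $x_{i}$ separates $v_{1}$ from $v_{3}$ in $B\setminus\{v_{2}\}$; in particular, for each $i$ there is a $v_{1}$--$v_{3}$ path in $B\setminus\{v_{2}\}$ whose first step is some $x_{j}$ with $j\neq i$, so at least two neighbours of $v_{1}$ can start such a path. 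This ``two-route'' feature is precisely what is missing in the situation of Lemma \ref{cut3p1}, and it is what I would play against the cutsets $\mathcal{N}_{B}(v_{i})$ and $S_{i}$.

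I would then carry out a case analysis on the length of a shortest (chordless) $v_{1}$--$v_{3}$ path in $B\setminus\{v_{2}\}$ -- a common neighbour ($v_{1},x_{1}=y_{1},v_{3}$), a path of length three ($v_{1},x_{1},y_{1},v_{3}$), or a longer chordless path -- mirroring the case analysis of Lemma \ref{cut3p1} but now exploiting the two-route property instead of a forced cut vertex. In each case the target is a set $T\in\mathfrak{C}(\overline{B})$ with $c_{\overline{B}}(T)\geq |T|+2$ (contradicting unmixedness of $J_{\overline{B}}$), or a set $T\in\mathfrak{C}(\overline{B})$ no proper one-element subset of which is a cutset (contradicting accessibility of $\overline{B}$ through Proposition \ref{accnoncut}). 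The recurring mechanism, as in Lemma \ref{cut3p1}, is that Theorem \ref{accthm} forces every vertex of a ``middle'' component of $\overline{B}\setminus T$ to be adjacent to the cut vertex $v_{2}$, while $\mathcal{N}_{B}(v_{2})\subseteq S_{2}\in\mathfrak{C}(\overline{B}\setminus\{v_{2}\})$ forbids such a vertex $w$ from failing to be a cut vertex of $(\overline{B}\setminus\{v_{2}\})\setminus(S_{2}\setminus\{w\})$; the second, independent route from $v_{1}$ to $v_{3}$ is what lets me peel off an extra component and push $c_{\overline{B}}(T)$ above $|T|+1$.

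The main obstacle is, as in Lemma \ref{cut3p1}, the bookkeeping of this case analysis: verifying that each candidate $T$ is genuinely a cutset of $\overline{B}$, counting the connected components of $\overline{B}\setminus T$ correctly, and checking that the path hypothesis really excludes the configurations that would otherwise escape the contradiction. No individual step should be deep, but the interaction of the chordless $v_{1}$--$v_{3}$ path structure with the neighbourhoods $\mathcal{N}_{B}(v_{1}),\mathcal{N}_{B}(v_{3}),\mathcal{N}_{B}(v_{2})$ must be handled carefully, and the choice of which neighbour of $v_{1}$ to delete in order to invoke the hypothesis has to be matched to the component one is trying to split off.
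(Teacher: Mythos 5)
Your setup is correct and matches the paper's: assuming all three $J_{\overline{B}\setminus\{v_i\}}$ fail to be unmixed, \cite[Proposition 6.1]{acc} and \cite[Remark 5.4]{acc} give cutsets $S_i\in\mathfrak{C}(\overline{B}\setminus\{v_i\})$ with $\mathcal{N}_B(v_i)\subseteq S_i$ and $\mathcal{N}_B(v_i)\in\mathfrak{C}(\overline{B})$, and the ``recurring mechanism'' you name (Theorem \ref{accthm} forcing a trapped component to lie in $\mathcal{N}_B(v_2)$, against $\mathcal{N}_B(v_2)\subseteq S_2\in\mathfrak{C}(\overline{B}\setminus\{v_2\})$) is exactly the engine of the paper's argument. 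But the proposal then stops: you announce a case analysis on the length of a shortest chordless $v_1$--$v_3$ path and assert that in each case some cutset $T$ with $c_{\overline{B}}(T)\geq|T|+2$, or an inaccessible cutset, will appear, without constructing any such $T$ or verifying a single case. Since the entire content of the lemma is in producing that contradiction, and you yourself flag the case analysis as the ``main obstacle'' and leave it undone, this is a genuine gap rather than a completed proof.

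Moreover the planned route misreads why this lemma is easier than Lemma \ref{cut3p1}: no case analysis on path length is needed at all. The one move you are missing is to apply Proposition \ref{accnoncut} to $\mathcal{N}_B(v_1)\in\mathfrak{C}(\overline{B})$ (whose only cut vertex is $v_2$, as $v_1,v_3\notin\mathcal{N}_B(v_1)$) and descend through non-cut vertices to a two-element cutset $\{v_2,x_1\}\in\mathfrak{C}(\overline{B})$ with $x_1\in\mathcal{N}_B(v_1)$. Unmixedness gives $c(\{v_2,x_1\})=3$; one component is $\{f_{v_2}\}$, and --- this is precisely where the hypothesis enters, for the specific neighbour $x=x_1$ --- another component contains both $v_1$ and $v_3$. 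The third component $A^{x_1}$ therefore contains no cut vertex, so by Theorem \ref{accthm}(iii) each $w\in A^{x_1}$ is adjacent to $v_2$, while its only possible neighbours outside $A^{x_1}$ are $v_2$ and $x_1$. Hence $V(A^{x_1})\subseteq\mathcal{N}_B(v_2)\subseteq S_2$, and such a $w$ cannot be a cut vertex of $(\overline{B}\setminus\{v_2\})\setminus(S_2\setminus\{w\})$, contradicting $S_2\in\mathfrak{C}(\overline{B}\setminus\{v_2\})$. Your ``two-route'' observation is not what drives the proof; what drives it is that the hypothesis guarantees the extra component of $\overline{B}\setminus\{v_2,x_1\}$ avoids $v_1$ and $v_3$ for the particular $x_1$ handed to you by accessibility.
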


\begin{proof}
Suppose $J_{\overline{B}\setminus \{v_{i}\}}$ is not unmixed for all $1\leq i\leq 3$. Then by \cite[Proposition 6.1]{acc}, for each cut vertex $v_{i}\in V(B)$, $1\leq i\leq 3$, there is a cut set $S_{i}\in \mathfrak{C}(\overline{B}\setminus \{v_{i}\})$ such that $\mathcal{N}_{B}(v_{i})\subseteq S_{i}$ and so, by \cite[Remark 5.4]{acc}, $\mathcal{N}_{B}(v_{i})\in \mathfrak{C}(\overline{B})$ for all $1\leq i\leq 3$. As $\overline{B}$ is accessible by Proposition \ref{accnoncut}, $\mathcal{N}_{B}(v_{1})\in \mathfrak{C}(\overline{B})$ implies there exists $x_{1}\in \mathcal{N}_{B}(v_{1})$ such that $\{v_{2},x_{1}\}\in \mathfrak{C}(\overline{B})$. Since $J_{\overline{B}}$ is unmixed, $c(\{v_{2},x_{1}\})=3$. After removing $v_{2}$ and $x_{1}$, there will be a path between $v_{1}$ and $v_{3}$. Let $\{f_{v_{2}}\}, A^{x_{1}}, A_{v_{1}v_{3}}$ be the three components of $\overline{B}\setminus \{v_{2},x_{1}\}$ such that $v_{1},v_{3}\in A_{v_{1}v_{3}}$. By Theorem \ref{accthm}, any vertex of $A^{x_{1}}$ is adjacent to $v_{2}$. Also, any vertex of $A^{x_{1}}$ can be adjacent to only $x_{1}$ and $v_{2}$ outside $A^{x_{1}}$. Choose a vertex $w\in A^{x_{1}}$. Then $w$ can not be a cut vertex in $(\overline{B} \setminus \{v_{2}\})\setminus (S_{2}\setminus \{w\})$ and this contradicts the fact that $S_{2}\in \mathfrak{C}(\overline{B}\setminus \{v_{2}\})$. Hence, our assumption was wrong and $J_{\overline{B}\setminus\{v_{i}\}}$ is unmixed for $i\in\{1,2,3\}$.
\end{proof}

\begin{proposition}\label{3cutacc}
Let $B$ be a block such that $\overline{B}$ is accessible with three cut vertices. Then there exists a cut vertex $v$ of $\overline{B}$ for which $J_{\overline{B}\setminus\{v\}}$ is unmixed.
\end{proposition}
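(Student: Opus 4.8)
The plan is to first pin down the adjacency pattern among the three cut vertices, then send most of the resulting cases straight to Lemmas~\ref{cut3p1} and~\ref{cut3p2}, and finally settle the one leftover case by a direct component count. To begin, since $\overline{B}$ has three cut vertices of the ambient graph lying in $V(B)$, the block $B$ contains three distinct such vertices, so $\lvert V(B)\rvert\ge 3$ and $B$ is $2$-connected; hence $\overline{B}$ is just $B$ with one whisker attached at each of those three vertices, and those three vertices --- call them $v_1,v_2,v_3$ --- are precisely the cut vertices of $\overline{B}$. Because $\overline{B}$ is accessible it is connected, non-complete, and $J_{\overline{B}}$ is unmixed, so Theorem~\ref{accthm}(ii) applies and $\overline{B}[\{v_1,v_2,v_3\}]$ is connected: it is either a path on three vertices or a triangle.

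\textbf{Path case.} Relabel so that $\{v_1,v_2\},\{v_2,v_3\}\in E(B)$ while $\{v_1,v_3\}\notin E(B)$. Since $J_{\overline{B}}$ is unmixed, $c_{\overline{B}}(\{v_2\})=2$, and as $B$ is $2$-connected each of $v_1,v_3$ has a $B$-neighbour other than $v_2$, so $v_1$ and $v_3$ lie in the same component of $B\setminus\{v_2\}$; thus there is a path between $v_1$ and $v_3$ in $B\setminus\{v_2,x\}$ whenever $x=v_2$. I would then split into three cases. If for every $x\in\mathcal{N}_B(v_1)$ there is a path between $v_1$ and $v_3$ in $B\setminus\{v_2,x\}$, Lemma~\ref{cut3p2} gives the conclusion. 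If instead for every $y\in\mathcal{N}_B(v_3)$ there is a path between $v_1$ and $v_3$ in $B\setminus\{v_2,y\}$, then Lemma~\ref{cut3p2} applied with the roles of $v_1$ and $v_3$ interchanged (the centre $v_2$ unchanged) again gives the conclusion. Otherwise there exist $x_1\in\mathcal{N}_B(v_1)$ and $y_1\in\mathcal{N}_B(v_3)$ admitting no path between $v_1$ and $v_3$ in $B\setminus\{v_2,x_1\}$ and none in $B\setminus\{v_2,y_1\}$; by the remark above $x_1\ne v_2$ and $y_1\ne v_2$, so $x_1\in\mathcal{N}_B(v_1)\setminus\{v_2\}$ and $y_1\in\mathcal{N}_B(v_3)\setminus\{v_2\}$, and Lemma~\ref{cut3p1} applies.

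\textbf{Triangle case (the main obstacle).} Now $\{v_i,v_j\}\in E(B)$ for all $i\ne j$, and I would argue by contradiction, assuming $J_{\overline{B}\setminus\{v_i\}}$ is not unmixed for every $i$. Exactly as at the start of the proofs of Lemmas~\ref{cut3p1} and~\ref{cut3p2}, \cite[Proposition~6.1]{acc} together with \cite[Remark~5.4]{acc} then force $\mathcal{N}_B(v_i)\in\mathfrak{C}(\overline{B})$ for every $i$. On the other hand every pair $\{v_i,v_j\}$ and the triple $\{v_1,v_2,v_3\}$ lie in $\mathfrak{C}(\overline{B})$ (each $v_i$ separates its whisker $f_{v_i}$), so unmixedness gives $c_{\overline{B}}(\{v_i\})=2$, $c_{\overline{B}}(\{v_i,v_j\})=3$ and $c_{\overline{B}}(\{v_1,v_2,v_3\})=4$; from the last equality $C:=B\setminus\{v_1,v_2,v_3\}$ is connected and nonempty. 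Writing $N_i:=\mathcal{N}_B(v_i)\cap V(C)$, so that $\mathcal{N}_B(v_i)=\{v_j,v_k\}\sqcup N_i$, the components of $\overline{B}\setminus\mathcal{N}_B(v_i)$ are $\{v_i,f_{v_i}\}$, $\{f_{v_j}\}$, $\{f_{v_k}\}$ together with the components of $C\setminus N_i$, and comparing with $c_{\overline{B}}(\mathcal{N}_B(v_i))=\lvert\mathcal{N}_B(v_i)\rvert+1=\lvert N_i\rvert+3$ shows $C\setminus N_i$ has exactly $\lvert N_i\rvert$ components; in particular $N_i\ne\emptyset$ and $V(C)\setminus N_i\ne\emptyset$ for each $i$. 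Combining this with Theorem~\ref{accthm}(iii) (every vertex of $C$ is a non-cut vertex, hence adjacent to some $v_m$, so $V(C)=N_1\cup N_2\cup N_3$) and with Proposition~\ref{accnoncut} (applied repeatedly to the cutsets $\mathcal{N}_B(v_i)$, peeling off the non-cut vertices of $N_i$ one at a time so that $\{v_j,v_k\}\cup N'\in\mathfrak{C}(\overline{B})$ along a descending chain $N'\subseteq N_i$), one builds from these incompatible constraints a cutset of $\overline{B}$ whose number of components exceeds its cardinality by more than one, contradicting unmixedness --- a case analysis of the same flavour as in Lemmas~\ref{cut3p1} and~\ref{cut3p2}. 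This contradiction shows $J_{\overline{B}\setminus\{v_i\}}$ is unmixed for some $i$. The genuinely delicate point is exactly this last step: squeezing a numerical contradiction out of the simultaneous requirements that $C\setminus N_i$ have $\lvert N_i\rvert$ components for each of the three indices $i$ together with accessibility of $\overline{B}$; by contrast the path case and the structural reductions above are routine consequences of Lemmas~\ref{cut3p1}, \ref{cut3p2} and Theorem~\ref{accthm}.
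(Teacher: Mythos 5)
Your path case is correct and is essentially identical to the paper's argument: the same three-way split (both Lemma~\ref{cut3p1} hypotheses fail for some $x_1,y_1$; or Lemma~\ref{cut3p2} applies at $v_1$; or it applies at $v_3$ by symmetry), and the split is indeed exhaustive since $\forall x\forall y\,(P(x)\vee Q(y))$ is equivalent to $(\forall x\,P(x))\vee(\forall y\,Q(y))$. The setup of the triangle case is also sound as far as it goes: the component counts $c_{\overline{B}}(\{v_1,v_2,v_3\})=4$, the connectedness of $C=B\setminus\{v_1,v_2,v_3\}$, and the fact that $C\setminus N_i$ has exactly $\lvert N_i\rvert$ components are all correctly derived from unmixedness.

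However, the triangle case contains a genuine gap. Your argument ends with the assertion that ``one builds from these incompatible constraints a cutset of $\overline{B}$ whose number of components exceeds its cardinality by more than one,'' but no such cutset is exhibited and no contradiction is actually derived; you yourself flag this as ``the genuinely delicate point.'' It is not at all evident from the numerical constraints alone that they are incompatible --- establishing this is a nontrivial piece of combinatorics, and it is precisely the content of a result already in the literature. The paper disposes of this case in one line by invoking \cite[Proposition 6.6]{acc}, which states that for an accessible graph whose cut vertices induce a complete graph there is always a cut vertex $v$ with $J_{G\setminus\{v\}}$ unmixed (this is also how the complete case is handled in the proof of Theorem~\ref{cutconthm}). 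So you should either cite that proposition for the triangle case, or actually carry out the construction you allude to; as written, the proof is incomplete exactly where you say the difficulty lies.
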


\begin{proof}
If the induced subgraph on the three cut vertex of $\overline{B}$ is complete then by \cite[Proposition 6.6]{acc}, there is a cut vertex $v\in V(B)$ of $\overline{B}$ such that $J_{\overline{B}\setminus \{v\}}$ is unmixed. Let $v_{1},v_{2},v_{3}$ be the cut vertices belonging to $V(B)$ and assume $B[\{v_{1},v_{2},v_{3}\}]$ is not complete. By Theorem \ref{accthm}, $B[\{v_{1},v_{2},v_{3}\}]$ is connected and so without loss of generality let $E(B[\{v_{1},v_{2},v_{3}\}])=\{\{v_{1},v_{2}\}, \{v_{2},v_{3}\}\}$. \par

Note that $\{v_{2}\}\in \mathfrak{C}(\overline{B})$ and thus, in $\overline{B}\setminus \{v_{2}\}$ there will be a path between $v_{1}$ and $v_{3}$ as $J_{\overline{B}}$ is unmixed. Suppose there exists vertices $x\in \mathcal{N}_{B}(v_{1})\setminus \{v_{2}\}$ and $y\in \mathcal{N}_{B}(v_{3})\setminus \{v_{2}\}$ such that there is no path between $v_{1}$ and $v_{3}$ in $\overline{B}\setminus \{v_{2},x\}$ and also, in $\overline{B}\setminus \{v_{2},y\}$. Then by Lemma \ref{cut3p1}, $J_{\overline{B}\setminus\{v_{i}\}}$ is unmixed for some $i\in\{1,2,3\}$. Now assume for each vertex $x\in \mathcal{N}_{B}(v_{1})$ there is a path between $v_{1}$ and $v_{3}$ in $\overline{B}\setminus \{v_{1},x\}$. Then by Lemma \ref{cut3p2}, $J_{\overline{B}\setminus\{v_{i}\}}$ is unmixed for some $i\in\{1,2,3\}$. Similarly, if there is a path between $v_{1}$ and $v_{3}$ in $\overline{B}\setminus \{v_{3},y\}$ for each vertex $y\in \mathcal{N}_{B}(v_{3})$, then again by Lemma \ref{cut3p2}, $J_{\overline{B}\setminus\{v_{i}\}}$ is unmixed for some $i\in\{1,2,3\}$.
\end{proof}

\begin{definition}\label{r-cutcon}{\rm
A connected graph $G$ is said to be $r$-\textit{cut-connected} if $G$ has no cut vertex or for any cut vertex $v$ of $G$, the number of cut vertices in any connected component of $G\setminus\{v\}$ is less than or equal to $r$.\par 
For a disconnected graph $G$, if every connected components of $G$ is $r$-cut-connected, then we call $G$  is $r$-cut-connected
}
\end{definition}

\begin{definition}\label{strongly-r-cutcon}{\rm
A graph $G$ is called \textit{strongly $r$-cut-connected} if $G$ is $r$-cut-connected and for any cut vertex $v$ of $G$, $G\setminus\{v\}$ is strongly $r$-cut-connected.
}
\end{definition}

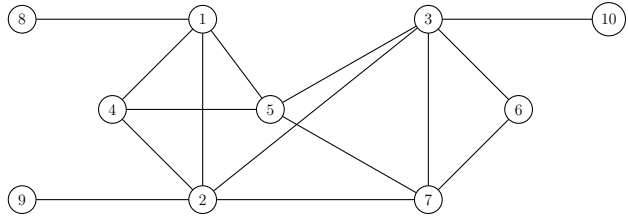
\begin{figure}[H]
\begin{center}
\begin{tikzpicture}
  [scale=.6,auto=left,every node/.style={circle,scale=0.5}]
 
  \node[draw] (n1) at (0,4)  {$1$};
  \node[draw] (n2) at (0,0)  {$2$};
  \node[draw] (n3) at (5,4) {$3$};
   \node[draw] (n4) at (-2,2) {$4$};
   \node[draw] (n5) at (1.5,2) {$5$};
  \node[draw] (n6) at (7,2) {$6$};
  \node[draw] (n7) at (5,0) {$7$};
  \node[draw] (n8) at (-4,4) {$8$};
  \node[draw] (n9) at (-4,0) {$9$};
  \node[draw] (n10) at (9,4) {$10$};
 
  \foreach \from/\to in {n1/n2,n1/n5,n1/n4,n1/n8, n2/n3, n2/n4, n2/n7, n2/n9, n3/n5, n3/n6, n3/n7,n3/n10, n4/n5, n5/n7, n6/n7}
    \draw[] (\from) -- (\to);
   
\end{tikzpicture}
\end{center} 
\caption{Strongly $3$-cut-connected graph $G$ with $3$ cut vertices and $J_{G}$ Cohen-Macaulay.}\label{fig3cutcon}
\end{figure}

\begin{lemma}\label{cutcon}
If $G$ is strongly $r$-cut-connected, then for any cut vertex $v$ of $G$, $G_{v}$ and $G_{v}\setminus\{v\}$ are strongly $r$-cut-connected. 
\end{lemma}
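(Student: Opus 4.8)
The plan is to prove, by induction on $n=\abs{V(G)}$, the slightly more flexible statement that $H_{v}$ is strongly $r$-cut-connected whenever $H$ is strongly $r$-cut-connected and $v\in V(H)$ is \emph{any} vertex, not necessarily a cut vertex; the claim about $G_{v}\setminus\{v\}$ will then follow from a short separate observation. The cases $n\le 2$ are trivial since such graphs are complete. For the inductive step, the starting point is a structural description of what localization does to cut vertices. From $\mathfrak{C}(G_{v})=\{T\in\mathfrak{C}(G)\mid v\notin T\}$ (\cite{acc}, Lemma 4.5 and Lemma 5.5), applied to singleton cutsets, the cut vertices of $G_{v}$ are exactly the cut vertices of $G$ other than $v$, while $v$ itself is a free vertex of $G_{v}$ and hence not a cut vertex. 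More is needed: if $w\neq v$ is a cut vertex of $G_{v}$ and $C$ denotes the connected component of $G\setminus\{w\}$ containing $v$, then the connected components of $G_{v}\setminus\{w\}$ are precisely the components of $G\setminus\{w\}$ other than $C$, together with the localization $C_{v}$ (here $C=G[V(C)]$, and $\mathcal{N}_{C}(v)=\mathcal{N}_{G}(v)\setminus\{w\}$). This is because every edge added in passing from $G$ to $G_{v}$ joins two vertices of $\mathcal{N}_{G}(v)$, all of which apart from $w$ lie in $C$; in particular localization never merges distinct components nor creates a new cut vertex inside a component, and the cut vertices of $C_{v}$ are exactly the cut vertices of $C$ other than $v$.

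With this in hand, I would first check that $G_{v}$ is $r$-cut-connected. Let $w$ be a cut vertex of $G_{v}$; then $w$ is a cut vertex of $G$, so by $r$-cut-connectedness of $G$ every component of $G\setminus\{w\}$ has at most $r$ cut vertices. The components of $G_{v}\setminus\{w\}$ distinct from $C_{v}$ coincide with components of $G\setminus\{w\}$, hence have at most $r$ cut vertices, and the cut vertices of $C_{v}$ are a subset of those of $C$, again at most $r$ in number. For the ``strongly'' part, fix a cut vertex $w$ of $G_{v}$; since $G$ is strongly $r$-cut-connected and $w$ is a cut vertex of $G$, the graph $G\setminus\{w\}$ is strongly $r$-cut-connected, hence so is each of its components — this disposes of every component of $G_{v}\setminus\{w\}$ except $C_{v}$. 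For $C_{v}$: the component $C$ is strongly $r$-cut-connected and has fewer than $n$ vertices (it omits $w$), so the induction hypothesis applied to the pair $(C,v)$ — this is precisely why the statement is phrased for an arbitrary vertex, since $v$ need not be a cut vertex of $C$ — gives that $C_{v}$ is strongly $r$-cut-connected. Hence $G_{v}$ is strongly $r$-cut-connected.

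It then remains to treat $G_{v}\setminus\{v\}$. Since $v$ is a cut vertex of $G$, each of the at least two components of $G\setminus\{v\}$ meets $\mathcal{N}_{G}(v)$, so $\abs{\mathcal{N}_{G}(v)}\ge 2$ and $v$ is a free vertex of $G_{v}$. Thus it suffices to establish the auxiliary fact that deleting a free vertex from a strongly $r$-cut-connected graph keeps it strongly $r$-cut-connected, and to apply it to $G_{v}$. That auxiliary fact is itself a short induction on the number of vertices: deleting a free vertex $u$ of $K$ creates no new cut vertices (any path through $u$ in $K\setminus\{x\}$ can be rerouted through the clique $\mathcal{N}_{K}(u)$), so the cut vertices of $K\setminus\{u\}$ form a subset of those of $K$; for each such $x$ one has $(K\setminus\{u\})\setminus\{x\}=(K\setminus\{x\})\setminus\{u\}$ with $K\setminus\{x\}$ strongly $r$-cut-connected and $u$ still free in it, so the induction hypothesis applies, and deleting the free vertex $u$ only decreases component counts, preserving the per-component bound of $r$. (Here one uses the convention, consistent with the rest of the paper, that a disconnected graph is strongly $r$-cut-connected exactly when each of its connected components is.)

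The main obstacle is the bookkeeping in the first paragraph: one must pin down exactly how the passage $H\rightsquigarrow H_{v}$ affects the partition of $V(H)\setminus\{w\}$ into connected components and the set of cut vertices, so as to identify the one ``new'' component containing $v$ as a genuine localization $C_{v}$ of the strictly smaller graph $C$ — this identification is what lets the induction close on a smaller instance. Everything else (the numerical $r$-cut-connected bound, the free-vertex deletion lemma, the degenerate small cases) should be routine once that description is in place; some care is also needed with the precise reading of ``strongly $r$-cut-connected'' on disconnected graphs, which we take to be componentwise as above so that ``each component of $G\setminus\{w\}$ is strongly $r$-cut-connected'' is legitimate.
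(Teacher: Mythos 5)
Your proof is correct and follows essentially the same route as the paper: induction on the number of vertices together with the identity $(G\setminus\{w\})_{v}=G_{v}\setminus\{w\}$ (your component-by-component description of $G_{v}\setminus\{w\}$, splitting it into the untouched components of $G\setminus\{w\}$ plus $C_{v}$, is exactly that identity read off componentwise). Your explicit strengthening of the inductive statement to an arbitrary vertex $v$ is a worthwhile refinement, since the paper applies its induction hypothesis to the pair $(G\setminus\{u\},v)$ even though $v$ need not remain a cut vertex of $G\setminus\{u\}$, so your formulation is the one that actually closes the induction; likewise your free-vertex-deletion lemma makes precise the paper's terser ``in a similar way'' for $G_{v}\setminus\{v\}$, and you prove rather than merely assert the plain $r$-cut-connectedness of $G_{v}$.
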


\begin{proof}
It is clear that $G$ is $r$-cut-connected implies $G_{v}$ and $G_{v}\setminus\{v\}$ are also $r$-cut-connected. We use induction on the number of vertices of $G_{v}$. If $G_{v}$ has no cut vertex, then we are done and note that for the base case $G_{v}$ has no cut vertex. Let $u$ be a cut vertex of $G_{v}$. Then $u$ is also a cut vertex of $G$. Now $G\setminus\{u\}$ is strongly $r$-cut-connected and so is $(G\setminus\{u\})_{v}$ by induction hypothesis. Note that $(G\setminus\{u\})_{v}= G_{v}\setminus\{u\}$ and hence $G_{v}$ is strongly $r$-cut-connected. In a similar way, $G_{v}\setminus\{v\}$ is also strongly $r$-cut-connected.
\end{proof}

\begin{theorem}\label{cutconthm}
Let $G$ be a strongly $3$-cut-connected graph having at most three cut vertices in any connected component. Then the following properties of $G$ are equivalent:
\begin{enumerate}[(i)]
\item $J_{G}$ is Cohen-Macaulay;
\item $R/J_{G}$ is $S_{2}$;
\item $G$ is accessible;
\item $J_{G}$ is strongly unmixed.
\end{enumerate}
\end{theorem}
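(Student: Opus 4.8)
The plan is to get the chain of implications (iv) $\Rightarrow$ (i) $\Rightarrow$ (ii) $\Rightarrow$ (iii) for free, since these hold for every graph as recalled in the introduction, and then to concentrate on the single remaining implication (iii) $\Rightarrow$ (iv): if $G$ is accessible (and, as in the hypothesis, strongly $3$-cut-connected with at most three cut vertices in each connected component), then $J_{G}$ is strongly unmixed. Accessibility already gives that $J_{G}$ is unmixed, so Theorem \ref{barsuG} reduces the claim to showing that $J_{\overline{B}}$ is strongly unmixed for every block $B$ of $G$. By Theorem \ref{baracc} each $\overline{B}$ is accessible; its cut vertices are precisely the cut vertices of $G$ lying in $V(B)$, so $\overline{B}$ has at most three cut vertices, and one checks by a short lemma (parallel to Lemma \ref{cutcon}) that $\overline{B}$ inherits strong $3$-cut-connectedness from $G$. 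Thus everything comes down to the self-contained statement: \emph{if $\overline{B}$ is a block with whiskers that is accessible, strongly $3$-cut-connected, and has at most three cut vertices, then $J_{\overline{B}}$ is strongly unmixed.}

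I would prove this last statement by induction on $|V(\overline{B})|$. If $\overline{B}$ is complete we are done by definition; and if $\overline{B}$ has no cut vertex at all then, being an accessible block, Theorem \ref{accthm}(i) forces $\mathfrak{C}(\overline{B})=\{\emptyset\}$, so $J_{\overline{B}}$ is prime and $\overline{B}$ is complete. So assume $\overline{B}$ is non-complete with $k\in\{1,2,3\}$ cut vertices. The first key step is to produce a cut vertex $v$ of $\overline{B}$ with $J_{\overline{B}\setminus\{v\}}$ unmixed: for $k=3$ this is exactly Proposition \ref{3cutacc}, while for $k\le 2$ the cut vertices of $\overline{B}$ are connected by Theorem \ref{accthm}(ii), hence induce a complete graph, and \cite[Proposition 6.6]{acc} applies. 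Once such a $v$ is chosen, \cite[Corollary 5.16]{acc} gives that $\overline{B}\setminus\{v\}$, $(\overline{B})_{v}$ and $(\overline{B})_{v}\setminus\{v\}$ are all accessible.

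Then I would treat the three auxiliary graphs in turn. Since $B$ is $2$-connected, $\overline{B}\setminus\{v\}$ decomposes as $\{f_{v}\}\sqcup C$ with $C$ connected; because $\overline{B}$ is $3$-cut-connected, $C$ has at most three cut vertices, it inherits strong $3$-cut-connectedness, and $|V(C)|<|V(\overline{B})|$, so applying Theorem \ref{barsuG} to $C$ (each of its blocks with whiskers being accessible, of bounded cut-vertex count, and strictly smaller, hence covered by the induction hypothesis) shows $J_{C}$, and therefore $J_{\overline{B}\setminus\{v\}}$, is strongly unmixed. The graph $(\overline{B})_{v}\setminus\{v\}$ is obtained from $C$ by adding a clique on $\mathcal{N}_{B}(v)$ together with the vertex $f_{v}$ joined to that clique, so it is connected, has at most three cut vertices (adding edges creates no new cut vertex), is strongly $3$-cut-connected by Lemma \ref{cutcon}, and is smaller than $\overline{B}$; the same induction (again routed through Theorem \ref{barsuG}) gives that $J_{(\overline{B})_{v}\setminus\{v\}}$ is strongly unmixed, whence Lemma \ref{freeGsu}, applied to the free vertex $v$ of $(\overline{B})_{v}$ with $J_{(\overline{B})_{v}}$ unmixed, yields that $J_{(\overline{B})_{v}}$ is strongly unmixed. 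Having all three binomial edge ideals strongly unmixed, $J_{\overline{B}}$ unmixed, and $v$ a cut vertex, the definition of strong unmixedness closes the induction and hence the theorem.

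The delicate point — the one I expect to be the real obstacle — is the bookkeeping around the class of graphs being inducted on: verifying that the property ``accessible, strongly $3$-cut-connected, at most three cut vertices per component'' is stable under replacing a block with whiskers $\overline{B}$ by $C$, by $(\overline{B})_{v}$, or by $(\overline{B})_{v}\setminus\{v\}$, and, after invoking Theorem \ref{barsuG}, under passing from a graph to the blocks with whiskers of its components. The bound on the number of cut vertices is exactly where the $3$-cut-connected hypothesis is spent: deleting the chosen cut vertex $v$ from the block-with-whiskers $\overline{B}$ leaves only one non-trivial component, whose cut-vertex count is controlled by $3$-cut-connectedness, which is the structural reason the scheme closes up (and the reason why reducing to blocks with whiskers first is essential, rather than working with $G$ directly where $(\cdot)_{v}\setminus\{v\}$ could merge several cut-vertex-heavy components). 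I would also need the inheritance lemma that $\overline{B}$ is strongly $3$-cut-connected whenever $G$ is, and to confirm the size estimate $|V(\overline{B'})|<|V(\overline{B})|$ for each block with whiskers $\overline{B'}$ appearing after a reduction; the latter follows from the elementary fact that the number of cut vertices of a graph lying inside one of its blocks is at most the number of vertices outside that block.
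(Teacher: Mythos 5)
Your proposal is correct in outline but follows a genuinely different route from the paper. The paper never reduces to blocks with whiskers: it works directly with the class $\mathcal{G}$ of strongly $3$-cut-connected accessible graphs having at most three cut vertices per component, produces a cut vertex $v$ with $J_{G\setminus\{v\}}$ unmixed (via \cite[Proposition 6.6]{acc} when the cut vertices inside each block induce a complete graph, and via Proposition \ref{3cutacc} in the remaining case, where Theorem \ref{accthm} forces all three cut vertices into one block inducing a $P_{2}$), observes that $G\setminus\{v\}$, $G_{v}$ and $G_{v}\setminus\{v\}$ are accessible by \cite[Corollary 5.16]{acc} and stay in $\mathcal{G}$ by the definition of strong $3$-cut-connectedness together with Lemma \ref{cutcon}, and then closes the induction in one stroke by citing \cite[Proposition 5.13]{acc}. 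You instead pass to blocks with whiskers through Theorems \ref{baracc} and \ref{barsuG} and run an explicit induction on $\vert V(\overline{B})\vert$, re-invoking Theorem \ref{barsuG} inside each step. This buys a self-contained statement at the level of blocks with whiskers (essentially case $(e)$ of Theorem \ref{allblocksu}) and avoids quoting \cite[Proposition 5.13]{acc}, but it costs an ingredient the paper never needs: the inheritance lemma that a block with whiskers of a strongly $3$-cut-connected graph is again strongly $3$-cut-connected. That claim is true --- at every level of the recursion the cut vertices of $\overline{B'}\setminus S$ inject into the cut vertices of the corresponding components of the ambient graph minus $S$, so both the counts and the recursive condition are dominated --- but it is not as immediate as Lemma \ref{cutcon} and must actually be written out; it is the one substantive piece of unfinished business in your argument. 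The remaining steps (production of $v$ via Proposition \ref{3cutacc} or \cite[Proposition 6.6]{acc}, accessibility of the three derived graphs, the size estimate $\vert V(\overline{B'})\vert<\vert V(\overline{B})\vert$, and the final appeal to Lemma \ref{freeGsu} and the definition of strong unmixedness) all check out and use the same ingredients as the paper.
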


\begin{proof}
(iv) $\Rightarrow$ (i) $\Rightarrow$ (ii) $\Rightarrow$ (iii) is known.\par
(iii) $\Rightarrow$ (iv): Let $\mathcal{G}$ be the class of strongly $3$-cut-connected accessible graphs having at most three cut vertices in any connected component. Let $G\in\mathcal{G}$. If for every block $B$ of $G$, the induced subgraph on the cut vertices of $G$ belong to $V(B)$ is complete, then by \cite[Proposition 6.6]{acc}, there exists a cut vertex $v$ of $G$ for which $G\setminus\{v\}$ is unmixed. Now assume the induced subgraph on the cut vertices of $G$ belong to a block is not complete. Since $G$ is accessible, by Theorem \ref{accthm}, the only possibility is there is a block of $G$ containing all three cut vertices and the induced subgraph on the cut vertices is a $P_{2}$. In this case, using Proposition \ref{3cutacc}, we get a cut vertex $v$ such that $G\setminus\{v\}$ is unmixed. Therefore, by \cite[Corollary 5.16]{acc}, $G\setminus\{v\}, G_{v}, G_{v}\setminus\{v\}$ are accessible. By definition $G\setminus\{v\}$ is strongly $3$-cut-connected and by Lemma \ref{cutcon}, $G_{v}, G_{v}\setminus\{v\}$ are strongly $3$-cut-connected. Since $G$ is $3$-cut-connected, number of cut vertices in any connected components of $G\setminus\{v\}, G_{v}$ and $G_{v}\setminus\{v\}$ are less than or equal to three. Thus, $G\setminus\{v\}, G_{v}, G_{v}\setminus\{v\}\in \mathcal{G}$ and hence by \cite[Proposition 5.13]{acc}, $J_{G}$ is strongly unmixed.
\end{proof}

\begin{example}{\rm
Consider the graph $G$ in \Cref{fig3cutcon}. By computing the primary decomposition of $J_{G}$ using Singular, we get 
\begin{align*}
\mathfrak{C}(G)=&\{\phi, \{1\},\{2\},\{3\}, \{1,2\},\{1,3\}, \{2,3\},\{2,5\}, \{3,7\},\\ &\{1,2,3\}\{1,2,5\}, \{1,3,7\}, \{2,3,5\}, \{2,3,7\},\\ &\{1,2,3,5\}, \{1,2,3,7\}, \{1,3,4,7\}\}.
\end{align*}
Note that $G$ is accessible. Now $G\setminus\{1\}$ contains two cut vertices $2$ and $3$. The sets of cut vertices in $G\setminus\{2\}$ and $G\setminus\{3\}$ are $\{1,3,5\}$ and $\{1,2,7\}$, respectively. Again, the sets of cut vertices of $G\setminus\{1,2\}, G\setminus\{1,3\}, G\setminus\{2,3\}, G\setminus\{2,5\}, G\setminus\{3,7\}$ are $\{3,5\}, \{2,7\}, \{1,5,7\}, \{1,3\}, \{1,2\}$, respectively. Also, $G\setminus\{1,2,3\}, G\setminus\{1,2,5\}, G\setminus\{1,3,7\}, G\setminus\{2,3,5\}, G\setminus\{2,3,7\}$ contains $\{5,7\}, \{3\},\{2,4\}, \{1\},\{1\}$, respectively as a set of cut vertices. If we go further, then there will be no cut vertices left. From this observation, it is clear that $G$ is strongly $3$-cut-connected graph with three cut vertices. Hence by Theorem \ref{cutconthm}, $J_{G}$ is strongly unmixed and so is Cohen-Macaulay.
}
\end{example}

\begin{theorem}\label{allblocksu}
Let $G$ be a graph such that every block $B$ of $G$ satisfies any of the following conditions:\\
$(a)$ $B$ is chordal; $(b)$ $\overline{B}$ is traceable; $(c)$ $B$ is a chain of cycles (see \cite[Definition 4.2]{s2acc}; $(d)$ $B=K_{m}\star_{r} K_{n}$; $(e)$  $\overline{B}$ is strongly $3$-cut-connected containing at most $3$ cut vertices of $G$. Then the following are equivalent:
\begin{enumerate}[(i)]
\item $J_{G}$ is Cohen-Macaulay;
\item $R/J_{G}$ is $S_{2}$;
\item $G$ is accessible;
\item $J_{G}$ is unmixed and each $\overline{B}$ is accessible.
\item $J_{G}$ is strongly unmixed.
\end{enumerate}
\end{theorem}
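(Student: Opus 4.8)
The plan is to close the chain of implications $\mathrm{(i)}\Rightarrow\mathrm{(ii)}\Rightarrow\mathrm{(iii)}\Leftrightarrow\mathrm{(iv)}\Rightarrow\mathrm{(v)}\Rightarrow\mathrm{(i)}$. The implications $\mathrm{(v)}\Rightarrow\mathrm{(i)}\Rightarrow\mathrm{(ii)}\Rightarrow\mathrm{(iii)}$ hold for an arbitrary graph and were recalled in the Introduction, so nothing has to be done there. For $\mathrm{(iii)}\Leftrightarrow\mathrm{(iv)}$ I would quote the accessibility half of Theorem \ref{blockthm}: if $G$ is accessible then $J_G$ is unmixed by definition and every $\overline{B}$ is accessible by Theorem \ref{baracc}, while conversely $J_G$ unmixed together with every $\overline{B}$ accessible gives $G$ accessible by Theorem \ref{baraccG}. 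This equivalence is unconditional --- the hypotheses $(a)$--$(e)$ play no role in it.

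All the content therefore sits in $\mathrm{(iv)}\Rightarrow\mathrm{(v)}$, and this is where the list $(a)$--$(e)$ is used. By Theorem \ref{barsuG} it is enough to know that $J_G$ is unmixed --- which is hypothesis $\mathrm{(iv)}$ --- and that $J_{\overline{B}}$ is strongly unmixed for every block $B$ of $G$. Since $\mathrm{(iv)}$ also supplies accessibility of each $\overline{B}$, the whole problem localizes to the single assertion: \emph{if $B$ satisfies one of $(a)$--$(e)$ and $\overline{B}$ is accessible, then $J_{\overline{B}}$ is strongly unmixed.} I would prove this assertion case by case. For $(d)$ I would first show that unmixedness of $J_{\overline{B}}$ together with accessibility forces the cut vertices of $G$ lying in $V(B)$ to occupy precisely the ``star positions'' $x_2,\dots,x_r,y_2,\dots,y_r$ of $B=K_m\star_r K_n$, so that $\overline{B}$ is the graph $\overline{K_m\star_r K_n}$ of the Construction in Section \ref{secstarsu} --- possibly carrying extra whiskers at free vertices, which split off harmlessly by Remark \ref{remglu} --- and then $J_{\overline{B}}$ is strongly unmixed by Theorem \ref{starsu}. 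For $(e)$ the graph $\overline{B}$ is itself connected, strongly $3$-cut-connected and has at most three cut vertices, so Theorem \ref{cutconthm} converts its accessibility directly into strong unmixedness of $J_{\overline{B}}$. For $(a)$ one uses that a chordal block $B$ has $\overline{B}$ chordal and appeals to the known validity of the conjecture for chordal graphs (concretely, accessibility of $\overline{B}$ forces $B$ to be a clique, and $K_n$ with whiskers is strongly unmixed by repeated use of Lemma \ref{freeGsu} and Remark \ref{remglu}). For $(b)$ one quotes the known fact that a traceable accessible graph has strongly unmixed binomial edge ideal, applied to $\overline{B}$. For $(c)$ one invokes the treatment of chains of cycles in \cite{s2acc}. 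In each case accessibility of $\overline{B}$ is upgraded to strong unmixedness of $J_{\overline{B}}$, and Theorem \ref{barsuG} then yields $\mathrm{(v)}$, closing the cycle.

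The reduction steps --- the recalled implications, the equivalence $\mathrm{(iii)}\Leftrightarrow\mathrm{(iv)}$, and the passage to blocks through Theorems \ref{blockthm} and \ref{barsuG} --- are formal and present no difficulty; they merely repackage results already proved in Sections \ref{secblocksu}--\ref{seccutcon}. The one genuinely delicate point is case $(d)$: one must rule out ``under-whiskered'' configurations, for instance $K_r\star K_r$ bearing whiskers on only one of its two cliques, by a short cutset count showing that such a graph fails to be unmixed, so that Theorem \ref{starsu} really does cover every accessible $\overline{B}$ with underlying block $K_m\star_r K_n$. A secondary caveat is that the imported results used for $(a)$, $(b)$, $(c)$ must each be stated in the strong form ``accessible $\Rightarrow$ strongly unmixed'', and not merely ``accessible $\Rightarrow$ Cohen--Macaulay'', since it is the strong unmixedness of the $J_{\overline{B}}$, not just their Cohen--Macaulayness, that Theorem \ref{barsuG} requires as input.
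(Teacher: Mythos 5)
Your proposal is correct and follows essentially the same route as the paper: the same cycle of implications, with (iii)$\Leftrightarrow$(iv) obtained from Theorems \ref{baracc} and \ref{baraccG}, and (iv)$\Rightarrow$(v) reduced via Theorem \ref{barsuG} to the block-by-block upgrade of accessibility of $\overline{B}$ to strong unmixedness of $J_{\overline{B}}$, using the same internal results (Theorems \ref{starsu} and \ref{cutconthm}) and the same imported theorems for cases $(a)$--$(c)$. Your explicit insistence on pinning down the whisker positions in case $(d)$ is in fact more careful than the paper, which dispatches that point with a brief appeal to ``the structure of $K_{m}\star_{r}K_{n}$.''
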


\begin{proof}
(v) $\Rightarrow$ (i) $\Rightarrow$ (ii) $\Rightarrow$ (iii) is clear.\\ 
(iii) $\Leftrightarrow$ (iv): Follows from Theorem \ref{baracc} and Theorem \ref{baraccG}.\\
(iv) $\Rightarrow$ (v): If $B$ is chordal, then $J_{\overline{B}}$ is strongly unmixed by \cite[Theorem 6.4]{acc} and if $\overline{B}$ is traceable. then by \cite[Theorem 6.8]{acc}, $J_{\overline{B}}$ is strongly unmixed. For a block $B$ which is a block of chains, strongly unmixed property of $J_{\overline{B}}$ follows from \cite[Theorem 4.17]{s2acc}. By Lemma \ref{staracc}, Theorem \ref{starsu} and the structure of $K_{m}\star_{r} K_{n}$ it follows that $J_{\overline{K_{m}\star_{r} K_{n}}}$ is strongly unmixed if and only if $\overline{K_{m}\star_{r} K_{n}}$ is accessible. If $B$ belongs to the category $(e)$, then $J_{\overline{B}}$ is strongly unmixed by Theorem \ref{cutconthm}. Now $J_{G}$ is unmixed and we see for each block $B$ of $G$, $J_{\overline{B}}$ is strongly unmixed. Hence by Theorem \ref{barsuG}, $J_{G}$ is strongly unmixed.
\end{proof}

\begin{example}\label{exallblocksu}{\rm
Consider the graph $G$ in \Cref{figbarsuG}. We see $G$ has $5$ blocks $B_{1},\ldots, B_{5}$ and for each $B_{i}$ we consider the graph $\overline{B_{i}}$ with respect to $G$. Using Singular (\cite{singular}), we check that each $\overline{B_{i}}$ is accessible. Therefore, by \cite[Theorem 6.8]{acc}, $J_{\overline{B_{1}}}$ is strongly unmixed and by \cite[Theorem 6.4]{acc}, $J_{\overline{B_{2}}}$ is strongly unmixed. We proved in Example $J_{\overline{B_{3}}}$ is strongly unmixed. Also, strongly unmixed property of $J_{\overline{B_{4}}}$ follows from \cite[Theorem 4.17]{s2acc} and $J_{\overline{B_{5}}}$ is strongly unmixed by Theorem \ref{starthm}. Using repeating application of Corollary \ref{unmixed} we see that $J_{G}$ is unmixed. Thus, by Theorem \ref{barsuG}, $J_{G}$ is strongly unmixed. (or by Theorem \ref{allblocksu}.)
}
\end{example}

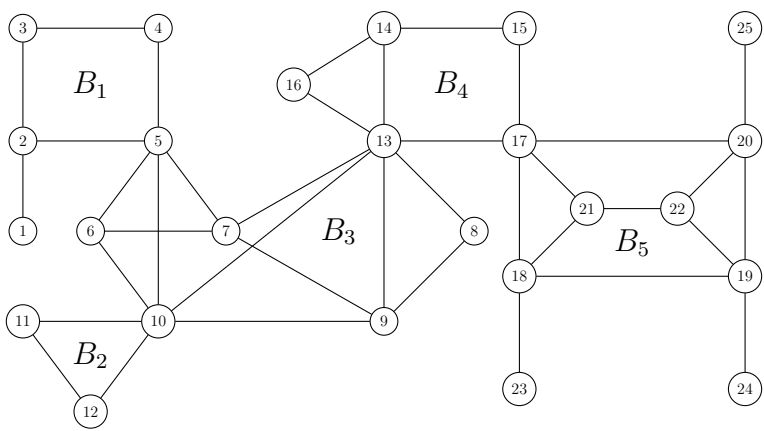
\begin{figure}[H]
\begin{center}
\begin{tikzpicture}
  [scale=.6,auto=left,every node/.style={circle,scale=0.5}]
  \node[draw] (n1) at (-4,2)  {$1$};
  \node[draw] (n2) at (-4,4)  {$2$};
  \node[draw] (n3) at (-4,6.5)  {$3$};
  \node[draw] (n4) at (-1,6.5)  {$4$};
  \node[draw] (n5) at (-1,4)  {$5$};
  \node[draw] (n6) at (-2.5,2)  {$6$};
  \node[draw] (n7) at (0.5,2)  {$7$};
  \node[draw] (n8) at (6,2) {$8$};
  \node[draw] (n9) at (4,0) {$9$};
  \node[draw] (n10) at (-1,0) {$10$};
  \node[draw] (n11) at (-4,0) {$11$};
  \node[draw] (n12) at (-2.5,-2) {$12$};
  \node[draw] (n13) at (4,4) {$13$};
  \node[draw] (n14) at (4,6.5) {$14$};
  \node[draw] (n15) at (7,6.5) {$15$};
  \node[draw] (n16) at (2,5.25) {$16$};
  \node[draw] (n17) at (7,4)  {$17$};
  \node[draw] (n18) at (7,1)  {$18$};
  \node[draw] (n19) at (12,1)  {$19$};
  \node[draw] (n20) at (12,4)  {$20$};
  \node[draw] (n21) at (8.5,2.5)  {$21$};
  \node[draw] (n22) at (10.5,2.5)  {$22$};
  \node[draw] (n23) at (7,-1.5)  {$23$};
  \node[draw] (n24) at (12,-1.5)  {$24$};
  \node[draw] (n25) at (12,6.5)  {$25$};
\node[scale=2] (B1) at (-2.5,5.25) {$B_{1}$};
\node[scale=2] (B2) at (-2.5,-0.75) {$B_{2}$};
\node[scale=2] (B3) at (3,2) {$B_{3}$};
\node[scale=2] (B4) at (5.5,5.25) {$B_{4}$};
\node[scale=2] (B5) at (9.5,1.75) {$B_{5}$};
 
  \foreach \from/\to in {n1/n2,n2/n3,n2/n5,n3/n4, n4/n5, n5/n6, n5/n7, n5/n10, n6/n10, n6/n7, n7/n9, n7/n13, n8/n9, n8/n13, n9/n10, n9/n13, n10/n11, n10/n12, n10/n13, n11/n12, n13/n14, n13/n17, n14/n15, n15/n17, n13/n16, n14/n16, n17/n18, n17/n20, n17/n21, n18/n19, n18/n21, n18/n23, n19/n20, n19/n22, n19/n24, n20/n22, n20/n25, n21/n22}
    \draw[] (\from) -- (\to);
   
\end{tikzpicture}
\end{center} 
\caption{Graph $G$ with $J_{G}$ and each $J_{\overline{B}}$ Cohen-Macaulay (Moreover, strongly unmixed) for each block $B$ of $G$.}\label{figbarsuG}

\end{figure}

As a consequence of our Theorem \ref{baracc}, \ref{blocksu}, \ref{baraccG} and \ref{barsuG}, we are proposing the following Question \ref{quescmblock}.

\begin{question}\label{quescmblock}{\rm
For a connected graph $G$, is it true that $J_{G}$ is Cohen-Macaulay if and only if $J_{G}$ is unmixed and $J_{\overline{B}}$ is Cohen-Macaulay for each block $B$ of $G?$
}
\end{question}

We answer the Question \ref{quesidentify} arise in \cite{acc}, for the case of accessibility and strongly unmixedness in our Theorem \ref{identifythm}. So we are repeating the Question \ref{quesidentify} with the unproven part as follows.

\begin{question}\label{quesidentifycm}{\rm
Let $G$ and $H$ be two disjoint connected graphs such that $J_{G}$ and $J_{H}$ are unmixed. Let $v,w$ be the cut vertices of $G, H$, respectively, for which $J_{G\setminus\{v\}}$ and $J_{H\setminus\{w\}}$ are unmixed. Set $G\setminus\{v\} = G_{1}\sqcup G_{2}, H\setminus\{w\} = H_{1}\sqcup H_{2}$. Let $F_{ij}$ be the graph obtained by gluing $G[V(G_{i})\cup \{v\}]$ and $H[V(H_{j})\cup \{w\}]$ identifying $v$ and $w$, where $i,j = 1,2$. If $J_{G}$ and $J_{H}$ are Cohen-Macaulay, is it true that $J_{F_{ij}}$ is Cohen-Macaulay$?$
}
\end{question}

\bibliographystyle{amsalpha}

\end{document}